\title[Magnitudes of Compact Sets]{On the Magnitudes of Compact Sets in
  Euclidean Spaces }
\author{Juan Antonio Barcel\'o and Anthony Carbery}
\thanks{The first author was supported by Spanish Grant MTM2011-28198.}
\address{Juan Antonio Barcel\'o,
ETSI de Caminos, Canales y Puertos,  
Universidad Polit\'ecnica de Madrid, 
28040 Madrid, Spain } 
\email{juanantonio.barcelo@upm.es}
\address{Anthony Carbery, 
School of Mathematics and Maxwell Institute for Mathematical Sciences, 
University of Edinburgh,
JCMB, 
Peter Guthrie Tait Road,
King's Buildings, 
Mayfield Road, 
Edinburgh, EH9 3FD, 
Scotland.} 
\email{A.Carbery@ed.ac.uk}
\date{9th July 2015, revised 13th July 2016}
\newtheorem{theorem}{Theorem}
\newtheorem{proposition}{Proposition}
\newtheorem{lemma}{Lemma}
\newtheorem{corollary}{Corollary}
\newtheorem{conjecture}{Conjecture}
\begin{document}

\begin{abstract}
The notion of the magnitude of a metric space was introduced by Leinster in \cite{Lein13} and 
developed in \cite{Meckes13}, \cite{LeWi}, \cite{Meckes14}, \cite{Will2} and \cite{LM16}, but 
the magnitudes of familiar sets in Euclidean space are only 
understood in relatively few cases. In this paper we study the
magnitudes of compact sets in Euclidean spaces. We first describe the
asymptotics of the magnitude of such sets in both the small and large-scale regimes.
We then consider the magnitudes of compact convex sets with 
nonempty interior in Euclidean spaces of odd dimension, and relate them to the boundary behaviour 
of solutions to certain naturally associated higher order elliptic boundary value problems 
in exterior domains. We carry out calculations leading to an algorithm for explicit evaluation 
of the magnitudes of balls, and this establishes the convex magnitude conjecture of Leinster 
and Willerton \cite{LeWi} in the special case of balls in dimension
three. In general we show that the magnitude of an odd-dimensional
ball is a rational function of its radius. In addition to
Fourier-analytic and PDE
techniques, the arguments 
also involve some combinatorial considerations.
\end{abstract}

\maketitle
\section{Introduction}\label{Intro}
\noindent
Motivated by considerations of a category-theoretic nature, Leinster \cite{Lein13} has 
introduced the notion of the {\em magnitude} of a metric space. Magnitude is an important new 
numerical invariant 
of a metric space which shares some of the more abstract properties of the Euler characteristic 
of a category (or of a topological space), and indeed both can be seen as special cases of the 
notion of the Euler characteristic or magnitude of an {\em enriched category}. In particular, the inclusion-exclusion 
principle enjoyed by the Euler characteristic provides important motivation for the hoped-for
properties of magnitude. More generally, magnitude is designed to capture the 
``essential size'' of a metric space in a more subtle way than cruder measures such as 
cardinality or diameter, and at the same time it will also contain further significant 
geometric information concerning the space. For a much more detailed discussion of these 
issues see \cite{Lein13}, \cite{LeWi}, \cite{Will2} and \cite{LM16}.

\medskip
\noindent
Leinster's definition of the magnitude of a finite metric space bears close resemblance to notions 
of a potential-theoretic nature, and Meckes \cite{Meckes13} and \cite{Meckes14} has developed this 
perspective to the point where a tractable definition of the magnitude of a positive-definite 
compact metric space can now be given in terms analogous to those of classical capacity. This provides 
the starting point for our investigations. 

\medskip
\noindent
Before describing our results, we give a little more informal background on magnitude in order that our 
contributions can be placed in context.

\medskip
\noindent
\subsection{Definitions of magnitude and connection with capacity.}
Given a finite metric space $(X,d)$, Leinster \cite{Lein13} defined its magnitude as the value 
$$ |X| = \sum_{x \in X} w(x)$$ 
whenever $w : X \to \mathbb{R}$ satisfies 
$$\sum_{y \in X} e^{-d(x,y)} w(y) = 1$$
for all $x \in X$. 

\medskip
\noindent
It is easy to check that any two such $w$ will give the same value for the magnitude, and
if no such $w$ exists we declare the magnitude to be undefined.
Under the mild additional condition that $X$ be positive-definite (meaning 
that the matrix $(e^{-d(x,y)})_{x, y \in X}$ is positive-definite), its magnitude 
is defined, see \cite{Lein13}, \cite{Meckes13} and \cite{Meckes14}. 
Finite subsets of Euclidean spaces are always positive-definite, \cite{Lein13}. It is easy to check 
(see below for the argument) that $|\emptyset| = 0$, $|\{x\}| = 1$ and that if $X_N$ is 
an $N$-point space consisting of the vertices of a simplex in $\mathbb{R}^{n}$ (with $n \geq N-1$) which 
are all equidistant $t$ from one another, then 
\begin{equation}\label{finite}
|X_N| = \frac{N}{1+(N-1)e^{-t}}.
\end{equation}

\medskip
\noindent
We describe a compact metric space as positive-definite if every finite subset is
positive-definite, and the magnitude of a compact positive-definite metric
space $(X,d)$ has been defined \cite{Meckes13} as
$$|X| = \sup \{|\Xi| \, : \, \Xi \mbox{ a finite subset of } X\}.$$
(Once again, any compact subset of a Euclidean space is positive-definite.)

\medskip
\noindent
Bearing in mind Leinster's definition, it is natural to consider the class of finite signed Borel 
measures $\mu$ on a compact positive-definite metric space $X$, and for such a $\mu$ introduce its 
{\em potential function} $\Phi_\mu$ given by
$$ \Phi_\mu(x) := \int_X e^{-d(x,y)} {\rm d} \mu(y).$$
If there is such a finite signed Borel measure $\mu$ on $X$ satisfying
$$ \Phi_\mu(x)= \int_X e^{-d(x,y)} {\rm d} \mu(y) \equiv 1 \mbox{ on }X,$$ 
then $\mu$ is called a {\em weight measure} for $X$. 
It is known (\cite{Meckes13}, \cite{Meckes14}) that if $X$ is compact and positive-definite, then 
$|X| = \mu(X)$ for any weight measure $\mu$.

\medskip
\noindent
If we take $X = [-R,R] \subseteq \mathbb{R}$ with the usual 
metric, then one simply checks using integration by parts that, with ${\rm d}x$ denoting Lebesgue measure
on $\mathbb{R}$, $\frac{1}{2}(\delta_{-R} + \delta_{R} + {\rm d}x|_{[-R,R]})$ is a 
weight measure for $X$, and hence $|[-R,R]| = R + 1$. (See \cite{Lein13}
and \cite{Meckes14}.) This is the only example of a nontrivial compact 
convex set in a Euclidean space whose magnitude was hitherto known. 

\medskip
\noindent
In nearly all examples for which the magnitude
is known explicitly one can fairly easily identify a weight measure; in particular when the metric space $X$ enjoys 
a lot of symmetry one expects a weight measure to reflect this symmetry and this leads to a limited range 
of possibilities. For example, in the case of the simplex mentioned above, we just set $w(y) = a$ for all $y$ and observe that 
the equation $\sum_{y \in X} e^{-d(x,y)} w(y) = 1$ becomes $a + (N-1)e^{-t}a = 1$, leading to \eqref{finite}. 
For more examples see \cite{Lein13}, \cite{LeWi}, \cite{Will2}. Our approach here is different and is 
motivated by connections with differential equations.

\medskip
\noindent
There is a clear analogy between magnitude and the notion of capacity as developed in classical 
potential theory, and which we now describe in a very informal manner. 
A possible definition of the $\alpha$-capacity of a compact metric space is
$$ \mbox{cap}_\alpha(X) =  \sup \mu(X)$$
where the sup is taken over all positive finite Borel measures $\mu$ on $X$ such that
$$ \int_X d(x,y)^{- \alpha} {\rm d} \mu(y) \leq 1 \mbox{ on }X.$$ 
The study of the potentials $ \int_X d(x,y)^{- \alpha} {\rm d} \mu(y)$ and the associated 
$\alpha$-capacities has a long and distinguished history. When we are in Euclidean space 
$\mathbb{R}^n$ with the usual metric, we can take advantage of the Fourier transform 
to characterise the $\alpha$-capacity of a compact subset $X$ for $0 < \alpha < n$ as
$$\mbox{cap}_\alpha(X) = c_{n,\alpha}\inf\left\{ \int_{\mathbb{R}^n}|\xi|^{n-\alpha}|\widehat{f}(\xi)|^2 {\rm d}\xi 
\, : \, f \geq 1 \mbox{ on } X\right\}$$
where $c_{n,\alpha}$ is a certain dimensional constant and $\, \widehat{}\, $ denotes the Fourier transform. See for example \cite{AH} for a thorough and detailed discussion of potential theory and capacity in the Euclidean setting.


\medskip
\noindent
Meckes in \cite{Meckes14} develops the analogy between magnitude and capacity beyond the formal level, and gives an extremal characterisation of 
magnitude in some generality. In the case of compact sets $X$ in Euclidean space $\mathbb{R}^n$ this 
characterisation can be realised as
\begin{equation}\label{def} 
|X| = \frac{1}{n! \omega_n} \inf\left\{\|f\|^2_{H^{(n+1)/2}(\mathbb{R}^n)} \, : \, 
f \in H^{(n+1)/2}(\mathbb{R}^n), f \equiv 1 \mbox{ on } X\right\}
\end{equation}
where $\omega_n$ is the volume of the unit ball in $\mathbb{R}^n$ and $H^{m}(\mathbb{R}^n)$
is the Sobolev space of functions whose derivatives of order up to $m$
are in $L^2(\mathbb{R}^n)$. More precisely,
$H^{m}(\mathbb{R}^n)$ is the space of Bessel potentials of order $m$,
and its norm is given by 
$$  \|f \|^2_{H^{m}(\mathbb{R}^n)} := \|(I - \Delta)^{m/2} f
\|^2_{L^2(\mathbb{R}^n)}.$$
(There are many different -- but equivalent -- norms on the space $H^{m}(\mathbb{R}^n)$
which are regularly employed in the literature, especially when $m$
is an integer. We emphasise that throughout this paper, we exclusively
use the definition above.) Note that when $m = (n+1)/2 > n/2$,
functions in $H^m$ are continuous by the Sobolev embedding theorem so 
the prescription $ f \equiv 1 \mbox{ on } X$ makes sense pointwise.

\medskip
\noindent
One can informally motivate this result of Meckes
in line with classical potential theory as follows. Adopting the standard convention from Fourier analysis -- 
which differs from that used by Meckes in \cite{Meckes13} and \cite{Meckes14} -- 
that the Fourier transform is given by
$$ \widehat{f}(\xi) = \int_{\mathbb{R}^n} f(x) e^{- 2 \pi i x \cdot \xi} {\rm d}x,$$
then it defines an isometry on $L^2$, and we have 
\begin{equation}\label{FT}
\widehat{e^{-|\cdot|}}(\xi) =  n! \omega_n \left(1 + 4 \pi^2
|\xi|^2\right)^{-\frac{n+1}{2}}.
\end{equation}
(See for example Stein, \cite{Stein}.) So if $\mu$ is a weight measure 
for $X \subseteq \mathbb{R}^n$ we can extend it to be a measure on the whole of $\mathbb{R}^n$ in the canonical way,
$\Phi_\mu$ becomes defined on all of $\mathbb{R}^n$ and we have $\Phi_\mu(x) = 1$ on $X$. 
On the other hand, by taking Fourier transforms, 
$$\widehat{\Phi_\mu}(\xi) =  n! \omega_n \left(1 + 4 \pi^2 |\xi|^2\right)^{-\frac{n+1}{2}} \widehat{\mu}(\xi).$$
Therefore
\begin{equation*}
\begin{aligned}
\mu(X) &= \int_X \Phi_\mu(x) {\rm d}\mu(x) = \int_{\mathbb{R}^n} \int_{\mathbb{R}^n} e^{-|x-y|} {\rm d}\mu(y) {\rm d}\mu(x) \\
&= \int_{\mathbb{R}^n}  n! \omega_n \left(1 + 4 \pi^2 |\xi|^2\right)^{-\frac{n+1}{2}} |\widehat{\mu}(\xi)|^2 {\rm d} \xi \\
&= \frac{1}{n! \omega_n} \int_{\mathbb{R}^n} |\widehat{\Phi_\mu}(\xi)|^2 \left(1 + 4 \pi^2 |\xi|^2\right)^{\frac{n+1}{2}} {\rm d} \xi
\end{aligned}
\end{equation*}
and this last expression is exactly $\frac{1}{n! \omega_n}
\|\Phi_\mu\|^2_{H^{(n+1)/2}(\mathbb{R}^n)}$.

\medskip
\noindent 
It is easy to see using Plancherel's theorem and the binomial theorem that when $m$ is an integer the inner product 
$\langle \cdot, \cdot \rangle_{H^m(\mathbb{R}^n)}$ corresponding to the $H^m$ norm is given by
\begin{equation}\label{xsd1}
\langle f,g \rangle_{H^m(\mathbb{R}^n)} := \sum_{j=0}^m {{m}\choose{j}} \int_{\mathbb{R}^n} D^j f \cdot \overline{D^{j} g}
\end{equation}
where 
$D^j f = \Delta^{j/2} f$ for $j$ even and $D^j f = \nabla \Delta^{(j-1)/2} f$ for
$j$ odd. We shall make systematic use of this in what follows.

\medskip
\noindent
One should note that although the right-hand side of equation \eqref{def} apparently depends upon 
the Euclidean space in which $X$ sits, the original definition of the magnitude of $X$ is an intrinsic 
metric invariant, and so formula \eqref{def} will hold for whatever Euclidean space $\mathbb{R}^n$ in 
which we can embed $X$.  

\medskip
\noindent
\subsection{Magnitude and geometric invariants.}
It is perhaps not {\em a priori} clear why, despite its ubiquity, the exponential function $s \mapsto e^{-s}$
is singled out to appear in the definition of magnitude in composition with the metric. Ultimately the reason for this is 
that its key property of converting addition to multiplication distinguishes it as the (essentially) unique
function $\Psi$ such that, if $\Psi(d(x,y))$ is the kernel of the potential function, then the 
magnitude of a finite metric space (as defined via $\Psi$) and the Euler characteristic of a 
finite category share a common generalisation in the Euler characteristic or magnitude of an enriched category. 
In particular, with $\Psi(s) = e^{-s}$, the triangle inequality gives us 
$$ \Psi(d(x,y)) \geq \Psi(d(x,z))\Psi(d(z,y))$$
for all $x,y,z \in X$; this inequality is to be thought of as analogous to 
the category-theoretic composition of a map from $x$ to $z$ with a map from $z$ to $y$ to obtain a map from $x$ to 
$y$. For more details see \cite{Lein13}.

\medskip
\noindent
In the category of finite sets, magnitude is just cardinality, which obviously satisfies the familiar 
inclusion-exclusion principle 
$$ \# (A \cup B) + \# (A \cap B) = \# (A) + \# (B).$$
Similarly, in the category of finite simplicial complexes, the classical Euler characteristic $\chi$ can be characterised
by $\chi(\emptyset) = 0$, $\chi(\{x\}) = 1$ and
$$ \chi (A \cup B) + \chi (A \cap B) = \chi (A) + \chi (B).$$
However, magnitude in the setting of finite metric spaces is manifestly not finitely additive as the example
of the simplex with equidistant vertices, \eqref{finite}, indicates. Nevertheless, formula \eqref{finite} tells
us that as $t \to \infty$, the magnitude of $X_N$ does asymptotically satisfy the inclusion-exclusion 
principle since $|X_N|$ approaches $N$ as $ t \to \infty$. More generally if $(X,d)$ is any 
finite metric space and if we let $tX = (X, td)$ for $t > 0$, it is
known (see \cite{Lein13}) that $|tX| \to \#X$ as 
$t \to \infty$, and so inclusion-exclusion holds in an {\em asymptotic} sense for the class of finite metric spaces. 
On the other hand, 
if we consider the (admittedly somewhat limited) class $\mathcal{K}$ of compact convex sets in $\mathbb{R}$, 
then if $A, B \in \mathcal{K}$ are such that $A \cup B \in \mathcal{K}$, we have the {\em exact} 
inclusion-exclusion identity
$$ |A \cup B| + |A \cap B| = |A| + |B|$$
for magnitude (since the compact interval $[a,b]$ has magnitude $1 + (b-a)/2$ as we have seen above). 
These considerations lead one to hope that magnitude in general will satisfy the inclusion-exclusion 
principle in some asymptotic sense, and perhaps exactly so under certain more restricted circumstances 
-- such as in the presence of convexity. (One may remark that the classical theory of capacity of compact 
{\em convex} sets is richer than the general theory, and from the category-theoretic point of view convex 
sets are natural in so far as the path of minimal distance between any two  points is contained in the set.)

\medskip
\noindent
Examples of metric spaces in which there is a lot of symmetry, and therefore where it is possible to evaluate 
the magnitude directly, yield further insight into some of the geometric characteristics which magnitude 
might capture. In analogy with the calculation for the simplex above, if $X$ 
is a compact metric space and $G$ is a compact group with Haar measure $\nu$, acting 
transitively and isometrically on $X$, then the only 
sensible candidate for a weight measure for $tX = (X, td)$ should have {\em constant} density 
$\lambda a(t)$ (for some $\lambda > 0$) where 
$ a(t) \int_G e^{-td(x, gy)} {\rm d} \nu(g) = 1$ (the integral is independent of $x$ and $y$).
So the magnitude of $tX$ should be
$$ |tX| = \frac{\nu(G)}{\int_G e^{-td(x, gy)} {\rm d} \nu(g)}.$$
Exact evaluation or asymptotic analysis of such expressions as $t \to \infty$ has led to the following results:

\medskip
\noindent
{\bf Theorem A} (Willerton, \cite{Will2}.) {\em Let $X$ be a two-dimensional homogeneous compact manifold. Then
$$ |tX| = \frac{1}{2 \pi} {\rm Area}(X) t^2 + \chi(X) + O(t^{-2})$$
as $t \to \infty$, where $\chi$ is the classical Euler characteristic.} 

\medskip
\noindent
{\bf Theorem B} (Willerton, \cite{Will2}.) {\em Let $n \geq 1$. Then there is an explicit formula 
for $|t \mathbb{S}^n|$ with its geodesic metric, which yields
$$ |t\mathbb{S}^n| = \frac{1}{n! \omega_n} {\rm Vol}_n(\mathbb{S}^n) t^n + 
\frac{n+1}{6 n! \omega_n} {\rm tsc}(\mathbb{S}^n) t^{n-2} + \cdots + \chi(\mathbb{S}^n) + O(e^{-t})$$
as $t \to \infty$, where ${\rm Vol}_n$ denotes $n$-dimensional volume, $\chi$ is the classical Euler 
characteristic and ${\rm tsc}$ is the total scalar curvature.}

\smallskip
\noindent
(Note that $\chi(\mathbb{S}^n)$ is $2$ for $n$ even and $0$ for $n$ odd.) Willerton also has formulae for the 
magnitude of spheres in Euclidean spaces with the subspace metric which are explicit when $n=1$ and $2$ 
and give the first few terms in the asymptotic expansion as $t \to \infty$ for larger values of $n$. 
In addition, he has obtained the first four terms in the asymptotic expansion for a general $n$-dimensional 
homogeneous compact manifold.

\medskip
\noindent
Together with the result that for a nonempty compact interval $I \subseteq \mathbb{R}$ we have
$$ |tI| = \frac{1}{2} {\rm Length}(I) t + \chi(I)$$
(where $\chi(I)$, the Euler characteristic of $I$, equals $1$), these results of Willerton suggest that 
$ t \mapsto |tX|$ should have a polynomial flavour as $t \to \infty$, with the coefficients of the 
polynomial part encapsulating important geometric information concerning $X$. Combined with the results of
numerical calculations \cite{Will1} and some parallel results when the $l^1$-metric is used instead of the 
usual metric (see \cite{Lein13}), they provide evidence for the following conjectures of Leinster and 
Willerton from \cite{LeWi}: 

\begin{conjecture}[Leinster--Willerton]\label{LW1}
For suitable (i.e. so that perimeter and classical Euler characteristic are at least defined) 
compact $X \subseteq \mathbb{R}^2$ we have, as $t \to \infty$,
$$ |tX| =  \frac{1}{2 \pi} {\rm Area}(X) t^2 + \frac{1}{4}{\rm Perim}(X) t+ \chi(X) + o(1).$$
\end{conjecture}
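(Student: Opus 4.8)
\medskip
\noindent
The plan for attacking Conjecture~\ref{LW1} would be to combine Meckes' formula \eqref{def} with its dual. In dimension $n=2$ one has $n!\,\omega_n = 2\pi$, so \eqref{def} reads
$$ |tX| = \frac{1}{2\pi}\,\inf\bigl\{\|f\|^2_{H^{3/2}(\mathbb{R}^2)} \,:\, f \equiv 1 \text{ on } tX \bigr\}, $$
while, writing $E(\alpha,\beta) = \iint e^{-|x-y|}\,{\rm d}\alpha(x)\,{\rm d}\beta(y)$ for the (positive-definite) energy form, one has for every distribution $\mu \in H^{-3/2}(\mathbb{R}^2)$ supported in $tX$ the lower bound
$$ |tX| \ \geq\ 2\langle \mu, 1\rangle - E(\mu,\mu), $$
with equality when $\mu$ is the weight measure $\nu$ of $tX$: indeed $2\langle\mu,1\rangle - E(\mu,\mu) = 2E(\nu,\mu) - E(\mu,\mu) = E(\nu,\nu) - E(\nu-\mu,\nu-\mu) \leq E(\nu,\nu) = |tX|$, using that $\Phi_\nu \equiv 1$ on $tX$, which contains the support of $\mu$. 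Hence it suffices to produce, for large $t$, an approximate weight measure $\mu_t$ supported in $tX$ with $\Phi_{\mu_t} = 1 + o(1)$ on $tX$ in a norm strong enough that $E(\mu_t,\mu_t) = \langle\mu_t,1\rangle + o(1)$ and that $\Phi_{\mu_t}$, corrected slightly so as to be $\equiv 1$ on $tX$, has $\|\cdot\|^2_{H^{3/2}}$-cost $2\pi\langle\mu_t,1\rangle + o(1)$ (using $(I-\Delta)^{3/2}\Phi_{\mu_t} = 2\pi\mu_t$). Both bounds then give $|tX| = \langle\mu_t, 1\rangle + o(1)$, and the conjecture reduces to the evaluation of $\langle\mu_t, 1\rangle$.

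\medskip\noindent
For the ansatz, I would read $\mu_t$ off from the local structure of the extremal problem. Since $\int_{\mathbb{R}^2} e^{-|x|}\,{\rm d}x = 2\pi$ by \eqref{FT}, the constant density $\tfrac{1}{2\pi}$ has potential identically $1$, so in the interior of $tX$ the weight measure should be close to $\tfrac{1}{2\pi}\mathbf{1}_{tX}\,{\rm d}x$, contributing $\tfrac{1}{2\pi}{\rm Area}(tX) = \tfrac{1}{2\pi}{\rm Area}(X)\,t^2$ -- the leading term. Near $\partial(tX) = t\,\partial X$ one adds a layer concentrated in an $O(1)$-collar, whose profile transverse to the boundary comes from a flat half-plane model: find the distribution $\tfrac{1}{2\pi}\mathbf{1}_{\{x_1<0\}}\,{\rm d}x + \sigma$, with $\sigma$ supported in $\{x_1\leq 0\}$ and invariant under translation in $x_2$, whose potential is identically $1$ on $\{x_1\leq 0\}$. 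Integrating out $x_2$ reduces this to a one-dimensional Wiener--Hopf equation whose convolution symbol is $2\pi(1+4\pi^2\xi^2)^{-3/2}$, i.e. to a factorisation of $(1+4\pi^2\xi^2)^{3/2}$ across the real line; its solution $\sigma$ has a well-defined mass per unit length of boundary, which the conjecture predicts is $\tfrac14$ (the two-dimensional counterpart of the value $\tfrac12$ carried by each endpoint of the interval, where the weight measure is $\tfrac12\mathbf{1}_{[-R,R]}\,{\rm d}x + \tfrac12(\delta_{-R} + \delta_R)$). A further, curvature-dependent correction to this flat layer contributes an amount proportional to $\int_{\partial(tX)}\kappa$; because $\mathbb{R}^2$ is flat, Gauss--Bonnet gives $\int_{\partial(tX)}\kappa = 2\pi\chi(X)$ for every $t$, and one checks that this correction contributes precisely $\tfrac{1}{2\pi}\int_{\partial(tX)}\kappa = \chi(X)$. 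Summing, $\langle\mu_t,1\rangle = \tfrac{1}{2\pi}{\rm Area}(X)t^2 + \tfrac14{\rm Perim}(X)t + \chi(X) + o(1)$.

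\medskip\noindent
To convert this heuristic into a proof one glues the transverse profiles along $\partial(tX) = t\,\partial X$ in Fermi (signed-distance / arclength) coordinates in an $O(1)$-collar, defines $\mu_t$ accordingly, and estimates $\Phi_{\mu_t} - 1$ on $tX$. The errors arise from the curvature of the rescaled boundary (which is $O(1/t)$), from the discrepancy between the true and the model potentials, and from the cut-off of the collar, and with the free data chosen optimally they should all be $O(t^{-1})$ in the relevant norm. I would carry this out first for $X$ the closure of a bounded open set with $C^\infty$ boundary (finitely many components, possibly with holes), where the collar construction and Gauss--Bonnet apply cleanly, and then pass to general ``suitable'' $X$ -- including sets with lower-dimensional pieces, where ``${\rm Perim}$'' degenerates -- by approximation together with the monotonicity of magnitude under inclusion. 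Structurally this is the extremal-problem analogue of Willerton's Theorems A and B above, where the same volume / curvature / Euler pattern appears via symmetry.

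\medskip\noindent
The main obstacle is the half-plane model -- and with it the exact constant $\tfrac14$. In odd dimensions $(n+1)/2$ is an integer and $(I-\Delta)^{(n+1)/2}$ is a local differential operator, so the corresponding model is an ordinary-differential boundary value problem solvable by hand; in the even-dimensional case $(I-\Delta)^{3/2} = (I-\Delta)(I-\Delta)^{1/2}$ carries the genuinely nonlocal factor $(I-\Delta)^{1/2}$, the model becomes a Wiener--Hopf \emph{integral} equation, and even granting a closed-form factorisation one must still extract from it the precise boundary mass density and the precise curvature coefficient, and verify that they are $\tfrac14$ and, after multiplication by $\tfrac{1}{2\pi}$, account for exactly $\chi(X)$. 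Beside this lies the quantitative error analysis: the flat-plus-curvature ansatz must approximate the true weight measure uniformly well enough to close the estimate at the $o(1)$ level, which is delicate where $\partial X$ has large curvature, and more delicate still if $\partial X$ is only $C^2$ or has corners, or if $X$ has components of lower dimension. It is precisely these nonlocal and uniformity features that place the even-dimensional setting -- and Conjecture~\ref{LW1} in particular -- deeper than the odd-dimensional results pursued in the rest of this paper.
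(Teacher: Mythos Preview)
There is no proof in the paper to compare against: Conjecture~\ref{LW1} is stated as an open problem, and the paper explicitly says ``We shall not be directly concerned with Conjecture~\ref{LW1} in the current paper.'' What you have written is not a proof either, but a programme --- as you yourself acknowledge in your final paragraph. So the honest comparison is: neither you nor the paper proves the statement.

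That said, let me flag where your programme is genuinely incomplete and where it may in fact be pointed in the wrong direction. The two hard steps you identify are real and are not carried out: (i) solving the half-plane Wiener--Hopf problem for the nonlocal operator $(I-\Delta)^{3/2}$ and extracting from it an exact boundary mass-per-unit-length, and (ii) proving the curvature correction contributes \emph{exactly} $\chi(X)$, together with uniform $o(1)$ error bounds. Neither is done here, and until they are you have a heuristic, not a proof.

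More seriously, your plan \emph{assumes} the boundary density will come out to be $\tfrac14$ --- you write ``which the conjecture predicts is $\tfrac14$'' --- but the evidence from this very paper suggests caution. Theorem~\ref{cxz} disproves Conjecture~\ref{LW2} in dimension~$5$, and the Gimperlein--Goffeng expansion quoted in Section~\ref{sec_final} shows that for odd $n\geq 3$ the coefficient of $R^{n-1}$ is $\frac{(n+1)\,\mathrm{Surf}(\partial X)}{2\,n!\,\omega_n}$, which agrees with the Leinster--Willerton prediction $\frac{\mathrm{Surf}(\partial X)}{2(n-1)!\,\omega_{n-1}}$ only when $n=3$. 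There is no reason to expect $n=2$ to behave like $n=3$ rather than like $n\geq 5$; if anything, the pattern suggests the half-plane computation (were you to carry it out) might \emph{not} yield $\tfrac14$, in which case the conjecture as stated would be false and your argument would establish a corrected asymptotic rather than the one you set out to prove. So the gap is not merely technical: the target constant itself is in doubt.
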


\medskip
\noindent
Here we see that intermediate-dimensional geometrical characteristics (in this case the
perimeter) of $X$ are expected to play a role. We shall not be directly concerned with Conjecture \ref{LW1} 
in the current paper, but will instead focus on the following, known as the convex magnitude conjecture of 
Leinster and Willerton, \cite{LeWi}:
\begin{conjecture}[Leinster--Willerton]\label{LW2}
Suppose $X \subseteq \mathbb{R}^n$ is compact and convex. Then
$t \mapsto |tX|$ is a polynomial of degree $n$ and moreover
\begin{equation}\label{LWform}
 |tX| =  \frac{{\rm Vol}(X)}{n! \omega_n}t^n + \frac{{\rm Surf}(\partial X)}{2(n-1)!\omega_{n-1}} 
t^{n-1}+ \dots  + 1
= \sum_{i=0}^n \frac{1}{i! \omega_i} V_i(X) t^i
\end{equation}
where $V_i(X)$ is the $i$'th intrinsic volume of $X$ and $\omega_i$ is the volume of the unit 
ball in $\mathbb{R}^i$.
\end{conjecture}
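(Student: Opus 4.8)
The plan is to derive the conjecture from Meckes's variational formula \eqref{def} by analysing the extremiser and then invoking Hadwiger's characterisation of intrinsic volumes as valuations.

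First I would rescale: substituting $f(x)\mapsto f(x/t)$ in \eqref{def} and using Plancherel gives
\[ |tX| = \frac{t^n}{n!\,\omega_n}\,\inf\Big\{\int_{\mathbb{R}^n}\big(1+4\pi^2|\xi|^2/t^2\big)^{(n+1)/2}|\widehat f(\xi)|^2\,{\rm d}\xi \;:\; f\in H^{(n+1)/2}(\mathbb{R}^n),\ f\equiv 1\text{ on }X\Big\}. \]
For $n$ odd, $(n+1)/2$ is a positive integer, so by the binomial theorem (compare \eqref{xsd1}) the functional is the finite sum $\sum_{j=0}^{(n+1)/2}\binom{(n+1)/2}{j}(4\pi^2 t^{-2})^j\|D^j f\|_{L^2}^2$. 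Let $h_t$ be its minimiser; it solves $(I-t^{-2}\Delta)^{(n+1)/2}h_t=0$ on $\mathbb{R}^n\setminus X$, with $h_t\equiv 1$ on $X$, $h_t\in C^{n-1}(\mathbb{R}^n)$ and decay at infinity, and $|tX|=\frac{t^n}{n!\omega_n}\,\mu_t(X)$ where $\mu_t:=(I-t^{-2}\Delta)^{(n+1)/2}h_t$ is the weight measure of $tX$ pulled back to $X$. (When $n$ is even the operator is nonlocal; I would either restrict to odd $n$ or attempt an interpolation in the dimension, the latter being itself delicate.)

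Second — and this is the crux — I would try to show $t\mapsto|tX|$ is a polynomial of degree $n$. Representing $h_t$ by layer potentials on $\partial X$ for the fundamental solution $E_t$ of $(I-t^{-2}\Delta)^{(n+1)/2}$ reduces this to the $t$-dependence of the associated boundary integral operators. The large-$t$ expansion of $E_t$ begins with the Euclidean polyharmonic fundamental solutions, which produce the leading terms of \eqref{LWform}, but it also carries exponentially small pieces and, in higher odd dimensions, pieces that survive integration against the smooth layer densities as a genuine rational (not polynomial) function of $t$. I therefore expect this step to be the main obstacle, and I expect its honest outcome to be that $|tX|$ is a polynomial of degree $n$ only for small $n$: indeed the companion calculations for balls in this paper show $|tB^n|$ is an honest polynomial for $n=3$ but only a proper rational function for odd $n\ge5$, so the conjecture as stated must fail for $n\ge5$ and the strategy can at best recover it for $n=1$ and $n=3$.

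Third, granting polynomiality in the cases where it holds, I would identify the coefficients. Write $|tX|=\sum_{i=0}^n c_i(X)t^i$. Continuity of $X\mapsto c_i(X)$ in the Hausdorff metric and invariance under rigid motions follow directly from \eqref{def}, and a scaling argument shows $c_i$ is homogeneous of degree $i$. The remaining input is the valuation identity $|A\cup B|+|A\cap B|=|A|+|B|$ for convex bodies with $A\cup B$ convex; I would prove it by splicing the extremisers for $A$ and $B$ along $A\cap B$ to produce admissible competitors for $A\cup B$ and $A\cap B$ and verifying the resulting energy identity, which transfers to each $c_i$. Hadwiger's theorem then forces $c_i=\lambda_i V_i$, and the constants $\lambda_i$ are fixed by $c_0\equiv1$, the one-dimensional identity $|tI|=\tfrac12{\rm Length}(I)\,t+1$, and evaluation on balls, yielding \eqref{LWform}. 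The difficulty, and the reason the conjecture holds only in a restricted range, lies entirely in the non-polynomial remainder of Step 2.
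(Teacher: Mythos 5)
The statement you were asked to prove is labelled a \emph{conjecture} in the paper, and the paper does not prove it: its main contribution is to \emph{refute} it in general, by showing (Theorem \ref{cxz}) that the magnitude of a ball in $\mathbb{R}^5$ is a genuinely non-polynomial rational function of the radius, while confirming the conjectured formula only for balls in $\mathbb{R}^3$ (Theorem \ref{zxc}). Your proposal correctly senses this: your Step 2 honestly concludes that polynomiality fails for odd $n\ge 5$, which is exactly the paper's verdict. So your overall assessment --- that no proof of the conjecture as stated can exist --- is right, and to that extent your write-up is a correct diagnosis rather than a proof.

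Two concrete gaps remain in the part of your argument that purports to salvage the conjecture for $n=1,3$. First, the valuation identity $|A\cup B|+|A\cap B|=|A|+|B|$ obtained by ``splicing the extremisers'' is unsubstantiated: magnitude is an \emph{infimum} of a quadratic energy, and gluing the extremisers for $A$ and $B$ along $A\cap B$ produces competitors for $A\cup B$ and $A\cap B$ that at best yield an inequality, not the exact identity needed to invoke Hadwiger's theorem; moreover the spliced function need not even lie in $H^{(n+1)/2}(\mathbb{R}^n)$, since membership requires matching of derivatives across the interface. The paper explicitly records that whether magnitude is a valuation even in dimension three remains open (Section \ref{sec_final}), so this step cannot be waved through. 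Second, even granting polynomiality, your route would not establish the conjecture for general convex bodies in $\mathbb{R}^3$: the paper's positive result there is only for balls, obtained not by layer potentials or Hadwiger but by explicitly solving the radial equation $(I-\Delta)^2h=0$ outside the ball in terms of the functions $\psi_j$, fitting the boundary conditions, and evaluating the boundary-integral formula of Theorem \ref{main}. Your layer-potential outline for the $t$-dependence is plausible in spirit (and is close to the semiclassical expansion of Gimperlein--Goffeng mentioned in Section \ref{sec_final}), but it is not carried out and would in any case only control the asymptotic, not exact, form of \eqref{LWform}.
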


\medskip
\noindent
The intrinsic volumes $V_i$ are classical integral-geometric invariants defined on the class of 
compact convex sets $\mathcal{K}$ in Euclidean space, and $V_i(X)$ encapsulates the quantitative 
$i$-dimensional information concerning $X$. In particular we have $V_i(tX) = t^i V_i(X)$.
Indeed, $V_n$ is ordinary volume, $V_{n-1}$ is half the surface area, and $V_i$ for $1 \leq i \leq n-2$ 
captures the $i$-dimensional information on the boundary $\partial X$, while $V_0(X) =1$ for
nonempty $X$ and $V_0(X) =0$ when $X$ is empty. Each $V_i$ is a {\em valuation} on
$\mathcal{K}$, which means that $V_i(\emptyset) = 0$, and if $A, B \in
\mathcal{K}$ are such that $A \cup B \in \mathcal{K}$, then we have the 
inclusion-exclusion identity
$$ V_i(A \cup B) + V_i(A \cap B) = V_i(A) + V_i(B).$$
For more detail see \cite{KR}. So the convex magnitude conjecture implies in particular that 
magnitude is a valuation on $\mathcal{K}$. On the other hand, it is
known by Hadwiger's theorem (see again \cite{KR}) 
that any valuation on $\mathcal{K}$ which is continuous with respect to the Hausdorff metric 
and which is invariant under Euclidean motions must be given by a
linear combination of the intrinsic volumes. It is also known (see \cite{McM1}, \cite{McM2})
that a valuation which is translation-invariant and monotone is necessarily continuous.
Clearly magnitude is invariant under Euclidean motions and is monotone, so 
the convex magnitude conjecture is true if and only if magnitude is a valuation {\em and} the conjecture 
is true for a suitable family of compact convex sets -- such as the closed balls -- which serves to 
normalise the precise coefficients of the polynomial arising.
It is therefore of considerable interest to determine the validity of 
the conjecture in the special case of closed balls in $\mathbb{R}^n$, and this issue 
is the main focus of the present paper. 

\medskip
\noindent
On the other hand, one does not see the inclusion-exclusion principle arising in classical potential 
theory, so perhaps a certain amount of caution is in order in approaching this issue.

\medskip
\noindent
To fix ideas, the convex magnitude conjecture predicts that the magnitude of 
the closed ball $B_R$ of radius $R$ in $\mathbb{R}^n$ will be: 
\begin{eqnarray*}
\begin{aligned}
& n=1 :  \; \;  R + 1 \\
& n=2 :  \; \;  \frac{R^2}{2!} + \frac{\pi R}{2} + 1\\
& n=3 :  \; \;  \frac{R^3}{3!} + R^2 + 2R + 1\\
& n=4 :  \; \;  \frac{R^4}{4!} + \frac{ \pi R^3}{8} + \frac{3R^2}{2} + \frac{3 \pi R}{4} +1\\
& n=5 :  \; \;  \frac{R^5}{5!} + \frac{R^4}{9} + \frac{2R^3}{3} + 2R^2 + \frac{8R}{3} +1\\
& n=6 :  \; \;  \frac{R^6}{6!} + \frac{\pi R^5}{128} + \frac{5R^4}{24} + \frac{5 \pi R^3}{16}
+  \frac{5R^2}{2} + \frac{15 \pi R}{16} +1\\
& n=7 :   \; \; \frac{R^7}{7!} + \frac{R^6}{225} +  \frac{R^5}{20} + \frac{R^4}{3} + \frac{4R^3}{3}
+  3R^2 + \frac{16R}{5} +1,\\
\end{aligned}
\end{eqnarray*}
etc.

\medskip
\noindent
\subsubsection{What is currently known about the convex magnitude conjecture.}
It is clear from \eqref{def} that $t \mapsto |tX|$ is an increasing function of $t$ when $X$ is compact and 
convex. Meckes \cite{Meckes14} has shown that this function is continuous on $(0,\infty)$, and Leinster \cite{Lein13} 
(see also Meckes \cite{Meckes14}) has shown 
that for {\em arbitrary} compact sets in $\mathbb{R}^n$ we have
$$|X| \geq \frac{{\rm Vol}(X)}{n! \omega_n}.$$
When $n=1$ the conjecture is true (see above), but when $n \geq 2$, methods based entirely on symmetry -- and 
indeed on weight measures -- will not suffice to resolve it, and instead we turn to 
techniques of differential equations.

We remark that a natural analogue of the convex magnitude conjecture does hold for convex bodies 
in $\mathbb{R}^n$ when instead of the $ell^2$ norm we use the $\ell_1^n$ norm. See Theorem 4.6(2) of 
\cite{LM16}.

\medskip
\noindent
\subsection{Main results of the present paper.}
We shall first establish that the asymptotics as $t \to 0$ and $t \to
\infty$ which are predicted by the Leinster--Willerton convex magnitude
conjecture do indeed hold, and in fact do so more generally for
nonempty compact sets. That is, we prove:

\begin{theorem}\label{asymprat}
Let $X$ be a nonempty compact set in $\mathbb{R}^n$. Then 
$$ |RX| \to 1 \mbox{ as }R \to 0$$ 
and
$$ R^{-n}|RX| \to  \frac{{\rm Vol}(X)}{n! \omega_n} \mbox{ as }R \to \infty.$$ 
\end{theorem}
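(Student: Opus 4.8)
The plan is to work directly from Meckes's variational characterisation \eqref{def}. For the small-scale regime, note that $|RX| \geq 1$ for all $R > 0$: indeed, taking any fixed $x_0 \in X$ and a single test function $f \in H^{(n+1)/2}$ with $f \equiv 1$ on a neighbourhood of $x_0$ already shows $|\{x_0\}| = 1$, and monotonicity of magnitude under inclusion gives $|RX| \geq |\{\text{point}\}| = 1$. For the upper bound, I would fix once and for all a single $g \in H^{(n+1)/2}(\mathbb{R}^n)$ with $g \equiv 1$ on the unit ball $B_1$ (a standard bump function will do). For $R$ small enough that $RX \subseteq B_1$ (possible since $X$ is bounded), $g$ is admissible in the infimum defining $|RX|$, whence $|RX| \leq \frac{1}{n!\omega_n}\|g\|^2_{H^{(n+1)/2}} < \infty$. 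This gives a uniform-in-$R$ bound but not yet the limit $1$. To get the limit I would exploit that as $R \to 0$ the obstruction ``$f \equiv 1$ on $RX$'' shrinks to the single point condition ``$f(0') = 1$'' after translating so that a chosen point of $X$ sits at the origin; more precisely, one constructs, for each small $R$, an admissible $f_R$ equal to $1$ on $RX$ and with $\|f_R\|^2_{H^{(n+1)/2}} \to n!\omega_n$. Such $f_R$ can be obtained by scaling: take $\phi \in H^{(n+1)/2}$ with $\phi \equiv 1$ on $X$ (translate $X$ to contain the origin, rescale a bump), set $f_R(x) = \phi(x/R)$; then $f_R \equiv 1$ on $RX$, and one computes that the dominant term of $\|f_R\|^2_{H^{(n+1)/2}}$, namely $R^n \int |\phi|^2$ coming from the zeroth-order part of the Bessel norm, can be made as close to $n!\omega_n$ as desired by an appropriate choice of $\phi$ supported near the origin — while the higher-derivative terms contribute $O(R^{n-2j})$ for $j \geq 1$... which unfortunately \emph{blows up}. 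So the naive scaling argument needs to be localised more carefully: instead one should choose $f_R(x) = \psi(x)\phi(x/R)$ with $\psi$ a fixed cutoff, or better, directly verify that $\Phi_{\mu_R}$ (the potential of a near-weight-measure on $RX$) works. I expect the clean route is: combine the lower bound $|RX| \geq 1$ with an upper bound $|RX| \leq |R \cdot \mathrm{diam}(X)\text{-ball}|$ by monotonicity, and then show the magnitude of a small ball tends to $1$ by an explicit weight-measure-type computation (or by citing the known behaviour of $|RB|$). The cleanest self-contained argument uses that for a set of diameter $\delta \to 0$, any weight-measure-like object has total mass $\to 1$; I would make this rigorous via the potential-function formulation: if $\mu_R$ is a finite measure on $RX$ with $\Phi_{\mu_R} \equiv 1$ on $RX$, then $\mu_R(RX) = \int \Phi_{\mu_R}\,d\mu_R$, and one shows $\mu_R(RX) \to 1$ by sandwiching $e^{-|x-y|}$ between $1$ and $1 - \mathrm{diam}(RX)$ on $RX \times RX$, giving $\frac{1}{1} \leq \mu_R(RX) = |RX|$ and $\mu_R(RX)(1 - \mathrm{diam}(RX)) \leq 1$, hence $1 \leq |RX| \leq (1 - \mathrm{diam}(RX))^{-1} \to 1$. (One must first know a weight measure exists on $RX$, or work with approximating finite subsets and the supremum definition — either way the sandwich estimate is the engine.)

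For the large-scale regime, the lower bound $R^{-n}|RX| \geq \mathrm{Vol}(X)/(n!\omega_n)$ is immediate for every $R$ from the inequality $|Y| \geq \mathrm{Vol}(Y)/(n!\omega_n)$ of Leinster/Meckes quoted in the excerpt, applied to $Y = RX$ together with $\mathrm{Vol}(RX) = R^n \mathrm{Vol}(X)$. So the content is the matching upper bound: $\limsup_{R\to\infty} R^{-n}|RX| \leq \mathrm{Vol}(X)/(n!\omega_n)$. Here I would use \eqref{def} and construct a good admissible test function on $RX$. The natural candidate is a mollification of the indicator of a slight enlargement of $RX$: let $\chi_R = \mathbf{1}_{(RX)_\varepsilon} * \eta_\rho$ where $(RX)_\varepsilon$ is the $\varepsilon$-neighbourhood and $\eta_\rho$ is a fixed smooth bump scaled to width $\rho$, with $\rho \ll \varepsilon$; then $\chi_R \equiv 1$ on $RX$. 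Using the integer-order inner product formula \eqref{xsd1} (valid since $(n+1)/2$ is an integer when $n$ is odd — and for even $n$ one uses a Bessel-potential version, or notes the theorem only needs the stated generality), one estimates $\|\chi_R\|^2_{H^{(n+1)/2}} = \sum_j \binom{(n+1)/2}{j}\int |D^j \chi_R|^2$. The $j=0$ term is $\int \chi_R^2 = \mathrm{Vol}((RX)_\varepsilon) + O(\text{surface area} \cdot \rho) = R^n\mathrm{Vol}(X) + O(R^{n-1})$ plus an $O(\varepsilon)$-type correction one controls by letting $\varepsilon \to 0$ after $R \to \infty$. Each term with $j \geq 1$ is supported in a $\rho$-neighbourhood of $\partial (RX)_\varepsilon$, which has volume $O(R^{n-1}\rho)$, and $|D^j \chi_R| = O(\rho^{-j})$ there, so that term is $O(R^{n-1}\rho^{1-2j})$; this is $o(R^n)$ provided $\rho$ is bounded below as $R\to\infty$ (e.g.\ $\rho = 1$) — crucially $\rho$ does \emph{not} need to shrink. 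Hence $R^{-n}\|\chi_R\|^2_{H^{(n+1)/2}} \to \mathrm{Vol}(X)$ (after the $\varepsilon \to 0$ limit is handled, which requires $X$ to have, say, boundary of measure zero — true for any compact set since we can take $(RX)_\varepsilon$ and let $\varepsilon \to 0$, using only $\mathrm{Vol}((RX)_\varepsilon) \to \mathrm{Vol}(RX)$ by dominated convergence / outer regularity of Lebesgue measure). Therefore $R^{-n}|RX| \leq \frac{1}{n!\omega_n}R^{-n}\|\chi_R\|^2 \to \frac{\mathrm{Vol}(X)}{n!\omega_n}$, completing the proof.

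I expect the main obstacle to be the large-scale upper bound, specifically organising the three interacting parameters $R \to \infty$, $\varepsilon \to 0$, $\rho$ fixed, so that the error terms are genuinely lower order for a \emph{general} compact $X$ (which may have wild boundary). The resolution is that one never differentiates the indicator of $X$ itself — one differentiates the mollifier — so the derivative bounds $O(\rho^{-j})$ hold with $\rho$ fixed, and the boundary contributions are controlled purely through the \emph{volume} of a shrinking-in-$\varepsilon$ shell, $\mathrm{Vol}((RX)_\varepsilon \setminus RX) \to 0$ as $\varepsilon \to 0$ for each fixed $R$, which needs no regularity of $\partial X$ beyond measurability. A secondary technical point is the non-integer case $n$ even, where \eqref{xsd1} is unavailable; there one works with the Bessel-potential norm directly, estimating $\|(I-\Delta)^{(n+1)/4}\chi_R\|_{L^2}^2$ via the Fourier multiplier, but the same scaling heuristic (zeroth-order term gives the volume, positive-order terms are supported near the mollified boundary and are lower order) goes through. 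For the small-scale limit, the only subtlety is justifying the existence of a weight measure — or equivalently reducing to the supremum over finite subsets — but this is handled by Meckes's results quoted in the excerpt, after which the diameter-sandwich estimate $1 \leq |RX| \leq (1-\mathrm{diam}(RX))^{-1}$ finishes it cleanly.
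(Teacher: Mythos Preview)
Your overall architecture matches the paper's: lower bounds from a point (small $R$) and from volume (large $R$), upper bounds via explicit test functions in \eqref{def}, and a monotonicity reduction to balls for $R\to 0$. But each half has a weak spot, one cosmetic and one genuine.

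\medskip
\noindent
\textbf{Large $R$.} Your mollified-indicator construction can be made to work, but the claim that the $\rho$-tube of $\partial(RX)_\varepsilon$ has volume $O(R^{n-1}\rho)$ is unjustified for a general compact $X$ with no boundary regularity. What \emph{is} true is that this tube lies in the shell $(RX)_{\varepsilon+\rho}\setminus(RX)_{\varepsilon-\rho} = R\cdot\bigl(X_{(\varepsilon+\rho)/R}\setminus X_{(\varepsilon-\rho)/R}\bigr)$, whose volume is $R^n$ times something tending to zero, and that suffices. The paper sidesteps this bookkeeping entirely via Fourier scaling: fix once and for all an $f\in H^{(n+1)/2}$ with $f\equiv 1$ on $X$, use $f(R^{-1}\cdot)$ as the test function for $RX$, and compute
\[
R^{-n}\|f(R^{-1}\cdot)\|_{H^{(n+1)/2}}^{2}
 = \int_{\mathbb{R}^n} |\widehat f(\xi)|^{2}\bigl(1+4\pi^{2}|\xi|^{2}/R^{2}\bigr)^{(n+1)/2}\,{\rm d}\xi
 \longrightarrow \|f\|_{L^2}^{2}
\]
by dominated convergence. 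Then let $f$ run over mollified indicators $\Phi_r\ast\chi_{X_r}$ with $r\to 0$ to drive $\|f\|_{L^2}^2\to\mathrm{Vol}(X)$. This works for every $n$ with no real-space derivative estimates at all.

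\medskip
\noindent
\textbf{Small $R$.} Here there is a genuine gap. Your sandwich $1\le |RX|\le (1-\mathrm{diam}(RX))^{-1}$ rests on $(1-\mathrm{diam})\,\mu(RX)\le \int e^{-|x-y|}\,{\rm d}\mu(y)=1$, which uses $e^{-|x-y|}\ge 1-\mathrm{diam}$ together with $\mu\ge 0$. But weight measures are \emph{signed} in general and need not exist as measures at all; passing to finite subsets instead requires the matrix inequality $Z-e^{-D}J\succeq 0$ (equivalently $|\Xi|\le e^{D}$), which you have not established and which is not obvious. ``Citing the known behaviour of $|RB|$'' is also not available, since the magnitude of balls beyond dimension one is precisely what this paper is computing for the first time.

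The paper avoids this by building an explicit admissible function for a small ball. Reducing first to odd $n$ (magnitude is intrinsic, so embed in one dimension higher if needed), set
\[
f_R(x)=\begin{cases}1,&|x|\le R,\\ e^{R}e^{-|x|},&|x|\ge R^{1/2},\end{cases}
\]
with a smooth interpolation on $R\le|x|\le R^{1/2}$ obeying $|\partial^\alpha f_R|\lesssim R^{-(|\alpha|-1)/2}$ for $|\alpha|\ge 1$. Using the integer-order norm \eqref{xsd1}, the transition annulus contributes at most $\sim R^{-(n-1)/2}\cdot R^{n/2}=R^{1/2}\to 0$, while the exterior region contributes $e^{2R}\int_{|x|>R^{1/2}}\sum_j\binom{(n+1)/2}{j}|D^j e^{-|\cdot|}|^2\to \|e^{-|\cdot|}\|_{H^{(n+1)/2}}^2 = n!\,\omega_n$ by dominated convergence. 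Hence $|B(0,R)|\to 1$, and then your monotonicity step $\{0\}\subseteq RX\subseteq B(0,RM)$ finishes. The idea you were missing is that the correct near-extremiser for a small ball is not a rescaled bump (which, as you noticed, blows up) but the function $e^{-|x|}$ itself---the exact extremiser for a single point---glued to the constant $1$ across a transition region of width $R^{1/2}\gg R$, wide enough to keep the derivative losses under control.
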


\medskip
\noindent
As a consequence of the first statement we have that $t \mapsto |tX|$
is continuous also at $t=0$.

\medskip
\noindent
We shall next be concerned with the validity of the convex magnitude conjecture for the 
class of convex bodies in Euclidean space $\mathbb{R}^n$, that is, compact convex sets with nonempty 
interior. Our starting point will be Meckes' formula \eqref{def} for the magnitude of such a convex body 
and we shall try to relate this to the formula \eqref{LWform} appearing in the convex magnitude conjecture.

\medskip
\noindent
Meckes has already observed \cite{Meckes14} that the extremiser in \eqref{def} exists and is unique, and also satisfies
the associated Euler--Lagrange equation 
\begin{equation}\label{EL}
(I - \Delta)^{(n+1)/2} f = 0 \mbox{ weakly on } \mathbb{R}^n \setminus X
\end{equation}
which, when $n$ is odd, is an elliptic differential (as opposed to pseudodifferential) equation. 
{\em In order to be able to work with differential rather than pseudodifferential equations we shall 
from Section \ref{Sec_ODE} onwards assume that $n$ is odd.} We shall first show that there is a unique 
member of $H^{(n+1)/2}(\mathbb{R}^n)$ which is a solution to \eqref{EL} and which is identically $1$ on $X$.
This solution therefore extremises the energy $\|f\|^2_{H^{(n+1)/2}(\mathbb{R}^n)}$ over all  
$f \in H^{(n+1)/2}(\mathbb{R}^n)$ with $f \equiv 1$ on $X$.

\medskip
\noindent
The convex magnitude conjecture predicts a single term corresponding to the volume of $X$, with the
remaining terms mainly relating to the boundary $\partial X$. Our next task is therefore to develop a formula 
for the extremal energy expressed in terms of ${\rm Vol}(X)$ and the values of the extremising function 
near $\partial K$. This is carried out in order to highlight the contribution to the extremal energy arising 
from the boundary of $X$ and hence ultimately to facilitate a comparison between \eqref{def} 
and \eqref{LWform}. The formula is analogous to classical representations of capacities as 
boundary integrals of functions of solutions to the associated partial differential equations, and it 
appears in Theorem \ref{main} below.

\medskip
\noindent
We then turn to the problem of explicit identification of the extremiser in the case that $X$ 
is a nontrivial closed ball. This is achieved by using spherical symmetry to reduce matters to consideration 
of certain ordinary differential equations.

\medskip
\noindent
Finally, having obtained the extremiser, we use our formula from Theorem \ref{main} to obtain 
an explicit evaluation of the magnitude of a closed ball. (Strictly speaking we could in principle avoid the use of 
our formula at this stage and instead simply calculate $\|f\|^2_{H^{(n+1)/2}(\mathbb{R}^n)}$ directly for the 
extremiser, but we wish to try to identify precisely the contributions of the boundary of the ball 
to its magnitude, and in any case application of the formula is fairly
direct.) Leinster and Meckes \cite{LM16} have recently developed an alternate approach to calculation of 
$\|f\|^2_{H^{(n+1)/2}(\mathbb{R}^n)}$ which bypasses our Theorem \ref{main}, but which does not emphasise the role 
of the boundary. See the remark at the end of Section \ref{sec_formula}. 

\medskip
\noindent
In fact, the explicit identification of the extremiser is not an entirely straightforward process. Our 
procedures amount to giving an algorithm for its identification in any odd dimension, and thus yield 
an algorithm for the formula for the magnitude of a closed ball in any odd dimension. The formulae
become more complex as dimension increases and so we provide explicit versions only in dimensions $1$, $3$, $5$ 
and $7$.

\medskip
\noindent
The first upshot of these calculations is that the Leinster--Willerton convex magnitude conjecture 
is true for closed balls in three dimensions:

\begin{theorem}\label{zxc}
The magnitude of the closed ball of radius $R>0$ in $\mathbb{R}^3$ is
$$\frac{R^3}{3!} + R^2 + 2R + 1.$$
\end{theorem}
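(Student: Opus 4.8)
For $X=B_R\subseteq\mathbb{R}^3$ we have $(n+1)/2=2$ and $n!\,\omega_n=3!\cdot\tfrac{4\pi}{3}=8\pi$, so by \eqref{def} it suffices to produce the unique extremiser $f\in H^2(\mathbb{R}^3)$ with $f\equiv1$ on $B_R$ and to show $\tfrac{1}{8\pi}\|f\|^2_{H^2(\mathbb{R}^3)}=\tfrac{R^3}{3!}+R^2+2R+1$. By the earlier results this $f$ exists, is unique, and solves the Euler--Lagrange equation \eqref{EL}, i.e. $(I-\Delta)^2 f=0$ on $\mathbb{R}^3\setminus B_R$; by rotational invariance and uniqueness it is radial. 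The plan is thus to solve for $f$ outside $B_R$ as an ODE, match across $\partial B_R$, and evaluate the energy.

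\textbf{The ODE.} I would use the classical fact that for a radial function on $\mathbb{R}^3$, writing $f(x)=g(|x|)/|x|$ converts the Laplacian into $g\mapsto g''/r$, so that $(I-\Delta)^2 f=0$ on $r>R$ becomes $(1-\partial_r^2)^2 g=0$, with general solution $g(r)=(a+br)e^{r}+(c+dr)e^{-r}$ and hence $f(r)=(a+br)e^{r}/r+(c/r+d)e^{-r}$. Membership of $f$ in $H^2(\mathbb{R}^3)$ forces the growing exponentials to vanish, $a=b=0$, leaving $f(r)=(c/r+d)e^{-r}$ for $r\ge R$ with two parameters to determine.

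\textbf{Matching.} Here the key point is that a radial function lies in $H^2(\mathbb{R}^3)$ only if both $f$ and $f'$ are continuous across $r=R$: a jump in $f$ or in $f'$ puts a surface-measure term (a $\delta'$-layer, resp. a $\delta$-layer) into $(I-\Delta)f$, which is then not in $L^2$. Since $f\equiv1$ inside $B_R$ this gives $f(R)=1$ and $f'(R)=0$, a $2\times2$ linear system whose solution yields the explicit extremiser
\[
f(r)=e^{R-r}\Bigl((1+R)-\frac{R^2}{r}\Bigr),\qquad r\ge R .
\]
One must then check, conversely, that this $f$ really is the extremiser: it lies in $H^2(\mathbb{R}^3)$ (piecewise smooth with matched value and normal derivative, exponentially decaying), and it satisfies the \emph{weak} form of \eqref{EL}, since any admissible test function $\phi\in H^2(\mathbb{R}^3)$ with $\phi\equiv0$ on $B_R$ has $\phi=0$ and $\partial_\nu\phi=0$ on $\partial B_R$, so all boundary terms produced by integrating by parts against $\phi$ drop out and the interior term vanishes by the equation. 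Uniqueness then identifies this $f$ with the extremiser.

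\textbf{Evaluating the energy and the main obstacle.} Finally I would compute $\|f\|^2_{H^2(\mathbb{R}^3)}$, either by invoking the boundary-integral representation of Theorem \ref{main}, or directly from \eqref{xsd1} with $m=2$: split $\|f\|^2_{H^2}=\int_{B_R}\!+\int_{\mathbb{R}^3\setminus B_R}$ of $f^2+2|\nabla f|^2+(\Delta f)^2$. On $B_R$ the integrand is $1$, contributing $\mathrm{Vol}(B_R)=\tfrac{4}{3}\pi R^3$. On the exterior, integrating by parts twice, the bulk term is $\int f\,(I-\Delta)^2 f=0$, and the surface terms on $\partial B_R$ collapse --- using $f=1$ and $\partial_\nu f=0$ there --- to $4\pi R^2\,(\partial_r\Delta f)(R^+)$. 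A short computation (note $\Delta f$ has the same functional form as $f$ with $c$ replaced by $c-2d$) gives $(\partial_r\Delta f)(R^+)=2(1+R)^2/R^2$, whence $\|f\|^2_{H^2}=\tfrac{4}{3}\pi R^3+8\pi(1+R)^2$ and $|B_R|=\tfrac{R^3}{3!}+R^2+2R+1$. The ODE and the closing integrals are routine; the part demanding genuine care --- and the main obstacle --- is the interface analysis at $\partial B_R$: pinning down precisely which continuity conditions $H^2$-membership imposes (so that exactly the right matching data is used), and confirming that the candidate so obtained is truly the extremiser with no further hidden natural boundary condition, equivalently that the boundary contributions in the energy (or in Theorem \ref{main}) are accounted for correctly.
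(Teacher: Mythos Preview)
Your proof is correct and follows essentially the same strategy as the paper: identify the unique radial $H^2$ solution to $(I-\Delta)^2 f=0$ on $\{|x|>R\}$, fit the boundary data $f(R)=1$, $f'(R)=0$ forced by $H^2$-membership, and evaluate the energy via the boundary formula of Theorem~\ref{main}. Your explicit extremiser $f(r)=e^{R-r}\bigl((1+R)-R^2/r\bigr)$ is exactly the paper's $\alpha_0\psi_0+\alpha_1\psi_1$ with $\alpha_0=e^R(R+1)$, $\alpha_1=-e^R R^2$, and your computation $(\partial_r\Delta f)(R^+)=2(1+R)^2/R^2$ agrees with the paper's $(\Delta h)'(R)=2R\alpha_0\psi_2(R)$.

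The one genuine point of difference is in how the ODE is solved. The paper builds, in Sections~\ref{combsec}--\ref{Sec_ODE}, a family of functions $\psi_j$ and a combinatorial recursion designed to work uniformly in all odd dimensions, and then specialises to $n=3$. You instead exploit the classical three-dimensional trick $f(x)=g(|x|)/|x|$, which reduces $(I-\Delta)^2$ on radial functions to the constant-coefficient operator $(1-\partial_r^2)^2$ acting on $g$. This is more elementary and gets to the answer faster in dimension three, but it does not generalise: the substitution is special to $n=3$, whereas the paper's $\psi_j$ machinery is what drives the algorithm (and Theorem~\ref{rat}) in dimensions $5,7,\dots$. Your careful discussion of why exactly the two matching conditions $f(R)=1$, $f'(R)=0$ are both necessary and sufficient is also slightly more explicit than the paper's treatment at this point.
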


\medskip
\noindent
However, in higher odd dimensions the magnitude is not a polynomial in the radius, thus disproving the 
convex magnitude conjecture in general. In dimension five we have:

\begin{theorem}\label{cxz}
The magnitude of the closed ball of radius $R>0$ in $\mathbb{R}^5$ is
$$ \frac{R^5}{5!} +  \frac{3 R^5 + 27 R^4 + 105 R^3 + 216 R^2 + 72}{24(R +3)}.$$
\end{theorem}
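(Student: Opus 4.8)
The plan is to use the rotational symmetry of the ball to turn the elliptic equation \eqref{EL} into an ordinary differential equation, solve it explicitly, and then read off the magnitude from the boundary representation of Theorem \ref{main}.

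Since $n=5$ is odd, the discussion around \eqref{EL} tells us that the extremiser $f$ for \eqref{def} with $X = B_R$ is the unique element of $H^{3}(\mathbb{R}^5)$ with $f \equiv 1$ on $B_R$ solving the sixth-order elliptic equation $(I-\Delta)^3 f = 0$ on $\mathbb{R}^5 \setminus B_R$. By uniqueness and the rotation-invariance of the problem, $f(x) = u(|x|)$ with $u \equiv 1$ on $[0,R]$; writing $Lu := u - u'' - \tfrac{4}{r}u'$ for the radial part of $I-\Delta$ in dimension five, we have $L^3 u = 0$ on $(R,\infty)$. First I would pin down the three-dimensional space $\mathcal{S}$ of solutions of $L^3 u = 0$ on $(0,\infty)$ that decay at infinity. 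In odd dimensions this is elementary: the decaying radial solution of $(I-\Delta)v = 0$ in $\mathbb{R}^5$ is a multiple of $r^{-3/2}K_{3/2}(r) \propto e^{-r}(r^{-2}+r^{-3})$, and writing $v_\lambda(r) = r^{-3/2}K_{3/2}(\lambda r)$ one has $(I-\Delta)^k v_\lambda = (1-\lambda^2)^k v_\lambda$, so that $v_1$, $\partial_\lambda v_\lambda|_{\lambda=1}$ and $\partial_\lambda^2 v_\lambda|_{\lambda=1}$ are three explicit (elementary) decaying solutions spanning $\mathcal{S}$. This is the $n=5$ instance of the general algorithm for odd $n$.

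Next I would impose the constraints that determine $u$ inside $\mathcal{S}$. A radial function which is piecewise smooth across $\{r=R\}$ lies in $H^3(\mathbb{R}^5)$ precisely when it is $C^2$ across that sphere (a jump in the third normal derivative is harmless); since $u \equiv 1$ on $[0,R]$, this forces $u(R)=1$, $u'(R)=0$, $u''(R)=0$, which is exactly three linear conditions on the coefficients of $u$ in a basis of $\mathcal{S}$. Solving this $3\times 3$ system gives $u$ explicitly; the relevant Wronskian-type determinant is what produces the factor $R+3$ in the denominator of the final formula. Having obtained $f$, I would then compute
$$ |B_R| = \frac{1}{5!\,\omega_5}\,\|f\|^2_{H^{3}(\mathbb{R}^5)} = \frac{1}{64\pi^2}\,\|f\|^2_{H^{3}(\mathbb{R}^5)},$$
and rather than integrating $\sum_{j=0}^3 \binom{3}{j}\int |D^j f|^2$ head-on, I would invoke Theorem \ref{main}, which expresses this energy as ${\rm Vol}(B_R)/(5!\,\omega_5)$ — giving the leading term $R^5/5!$ — plus an integral over $\partial B_R$ of an explicit polynomial in the (now known) boundary values $u(R), u'(R), u''(R), u'''(R), \dots$. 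Substituting and simplifying the resulting rational function of $R$ should yield
$$ |B_R| = \frac{R^5}{5!} + \frac{3R^5 + 27R^4 + 105R^3 + 216R^2 + 72}{24(R+3)};$$
as a consistency check, this tends to $1$ as $R\to 0$ and $R^{-5}|B_R| \to 1/120 = {\rm Vol}(B_1)/(5!\,\omega_5)$ as $R\to\infty$, in agreement with Theorem \ref{asymprat}. (Alternatively one may bypass Theorem \ref{main} and compute $\|f\|^2_{H^3(\mathbb{R}^5)}$ directly, in the spirit of \cite{LM16}.)

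The step I expect to be the main obstacle is the explicit and \emph{correct} identification of the three decaying radial solutions of $L^3 u = 0$ and the bookkeeping in the matching conditions: the triple-root structure of $(I-\Delta)^3$, the parameter-differentiation that produces the non-exponential-decay solutions, and the ensuing linear algebra are where the precise numerator $3R^5+27R^4+105R^3+216R^2+72$ and denominator $24(R+3)$ are actually decided, and where sign or normalisation slips are most likely. Once $u$ is in hand, feeding it into Theorem \ref{main} and simplifying is routine, if somewhat lengthy.
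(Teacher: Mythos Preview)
Your proposal is correct and follows the paper's approach: reduce to the radial ODE, identify the three-dimensional space of decaying solutions, impose the $C^2$ matching conditions at $r=R$ (equivalently $h(R)=1$, $h'(R)=0$, $\Delta h(R)=0$, which is exactly the system \eqref{lincondns} for $\nu=2$), and evaluate via Theorem~\ref{main}. The one genuine variation is your construction of the solution basis by differentiating $v_\lambda(r)=r^{-3/2}K_{3/2}(\lambda r)$ in the spectral parameter at $\lambda=1$; the paper instead builds the explicit basis $\psi_0=e^{-r}$, $\psi_1=e^{-r}/r$, $\psi_2=e^{-r}(r^{-2}+r^{-3})$ through the recursion $(\Delta_\nu-I)\psi_j=2(j-\nu)\psi_{j+1}$ of Proposition~\ref{key}, together with the combinatorial bookkeeping of Section~\ref{combsec}. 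Both routes span the same space and feed into the same $3\times 3$ linear system, so the difference is organisational rather than substantive; your Bessel-theoretic description is arguably cleaner conceptually, while the paper's $\psi_j$ basis is tailored to make the recursive evaluation of $(\Delta^{j-1}h)'(R)$ in \eqref{iuy} and the final substitution into \eqref{mnb} mechanical.
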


\medskip
\noindent
Note that when $R = 0$ this expression takes the value $1$, and as $R \to \infty$ the leading term is
$R^5/5!$, but the formula does not agree with the conjectured value of
$$\frac{R^5}{5!} + \frac{R^4}{9} + \frac{2R^3}{3} + 2R^2 + \frac{8R}{3} +1.$$
If one expands the expression for the magnitude asymptotically for $R > 3$ one obtains 
$$ \frac{R^5}{5!} + \frac{R^4}{8} + \frac{3R^3}{4} + \frac{17R^2}{8} +  \frac{21R}{8} + \frac{9}{8} +O(\frac{1}{R})$$
as $R \to \infty$. (Notice that some, but not all, of the coefficients
here differ by a multiplicative factor of $9/8$'ths from the conjectured values.) The role of the value $R = -3$ 
remains mysterious.

\medskip
\noindent
Although the magnitude is not a polynomial for odd $n \geq 5$, it is the next best thing:

\begin{theorem}\label{rat}
The magnitude of the closed ball of radius $R>0$, in a Euclidean space $\mathbb{R}^n$ of odd 
dimension, is a rational function of $R$ with rational coefficients. 
\end{theorem}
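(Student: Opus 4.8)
The plan is to combine the explicit description of the extremiser for a ball with the boundary formula of Theorem \ref{main}, and then simply to track where rational functions with rational coefficients can arise. Write $m = (n+1)/2$, which is a positive integer since $n$ is odd, and let $f$ be the unique member of $H^m(\mathbb{R}^n)$ with $f \equiv 1$ on $B_R$ and $(I-\Delta)^m f = 0$ weakly on $\mathbb{R}^n \setminus B_R$. By this uniqueness and the rotational symmetry of $B_R$, $f$ is radial, say $f(x) = g(|x|)$; on $[0,R]$ we have $g \equiv 1$, and on $[R,\infty)$ the function $g$ solves the radial reduction of $(I-\Delta)^m g = 0$ and decays at infinity.

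The first — and, I expect, the main — step is to pin down the structure of the decaying solutions of this ODE. Writing the radial Laplacian as $\Delta_{\mathrm{rad}}g = g'' + \tfrac{n-1}{r}g'$ and substituting $g = e^{-r}h$, a short computation gives $(I-\Delta_{\mathrm{rad}})(e^{-r}h) = e^{-r}Lh$, where $L$ is the second-order operator $Lh = -h'' + (2 - \tfrac{n-1}{r})h' + \tfrac{n-1}{r}h$, whose coefficients are rational functions of $r$ with rational (indeed integer) coefficients; hence $(I-\Delta_{\mathrm{rad}})^m(e^{-r}h) = e^{-r}L^m h$. The equation $L^m h = 0$ is a linear ODE of order $2m$ with such rational coefficients, and, $n$ being odd, the subspace of its solutions for which $e^{-r}h$ decays at infinity (equivalently, lies in $H^m$ near infinity) is $m$-dimensional. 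Using either the elementary closed form of the modified Bessel functions $K_\nu$ at half-integer order, or, inductively, the factorisation $(I-\Delta)^m = (I-\Delta)(I-\Delta)^{m-1}$ together with the observation that $(I-\Delta_{\mathrm{rad}})(e^{-r}\rho) = e^{-r}\tilde\rho$ sends $e^{-r}\times(\text{rational})$ to $e^{-r}\times(\text{rational})$, one obtains a basis $\phi_1,\dots,\phi_m$ of the decaying radial solutions of $(I-\Delta)^m g = 0$, each of the form $\phi_j(r) = e^{-r}\rho_j(r)$ with $\rho_j$ a rational function of $r$ with rational coefficients, and — crucially — independent of $R$.

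Next I would determine the coefficients of $g$ in this basis. Membership of $f$ in $H^m$ forces $g,g',\dots,g^{(m-1)}$ to match across $r = R$; since $g \equiv 1$ on $[0,R]$, this is the linear system $\sum_{j=1}^m c_j\,\phi_j^{(i)}(R) = \delta_{i0}$ for $i = 0,\dots,m-1$, which has a unique solution $(c_j(R))$ by the already-established uniqueness of the extremiser. Writing $\phi_j^{(i)}(R) = e^{-R}\sigma_{j,i}(R)$ with each $\sigma_{j,i}$ rational in $R$ with rational coefficients, Cramer's rule gives $c_j(R) = e^{R}\tilde c_j(R)$ with $\tilde c_j$ rational in $R$ with rational coefficients. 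Consequently, for every $i$ the exterior boundary value $g^{(i)}(R^+) = \sum_j c_j(R)\phi_j^{(i)}(R) = \sum_j \tilde c_j(R)\sigma_{j,i}(R)$ is again a rational function of $R$ with rational coefficients (for $i \le m-1$ it is simply $\delta_{i0}$), the offending exponential factors having cancelled.

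Finally I would feed this into Theorem \ref{main}. That formula expresses $|B_R|$ as $\frac{\mathrm{Vol}(B_R)}{n!\,\omega_n}$ plus a boundary integral over $\partial B_R$ whose integrand is a fixed universal expression — with rational coefficients, arising from the binomial coefficients in \eqref{xsd1} and from the curvature $1/R$ of the sphere — in the boundary jet $g(R^+),g'(R^+),\dots,g^{(n)}(R^+)$. By radial symmetry this integrand is constant on $\partial B_R$, so the boundary term equals $\frac{\mathrm{Surf}(\partial B_R)}{n!\,\omega_n} = \frac{n\omega_n R^{n-1}}{n!\,\omega_n} = \frac{R^{n-1}}{(n-1)!}$ times that constant; note that the powers of $\pi$ hidden in $\omega_n$ cancel. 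Since $\mathrm{Vol}(B_R)/(n!\,\omega_n) = R^n/n!$ and, by the previous step, the constant is a rational function of $R$ with rational coefficients, it follows that $|B_R|$ is a rational function of $R$ with rational coefficients. The one point needing care beyond the ODE structure of the second paragraph is to verify that Theorem \ref{main}, specialised to the ball, genuinely yields an integrand that is polynomial, with rational coefficients, in $1/R$ and in the boundary jet, with no transcendental quantities entering; this is exactly why it is convenient to route the computation through the boundary formula rather than integrating $\|f\|_{H^m}^2$ directly over the exterior, where exponential-integral terms appear and would then have to be shown to cancel.
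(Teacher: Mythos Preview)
Your proposal is correct and follows essentially the same route as the paper: identify a basis of decaying radial solutions of the form $e^{-r}\times(\text{rational function of }r\text{ with rational coefficients})$, solve the resulting linear system at $r=R$ by Cramer's rule to get coefficients of the form $e^{R}\times(\text{rational in }R)$, and then feed into the boundary formula of Theorem \ref{main} where the $e^{\pm R}$ factors cancel. The only presentational difference is that the paper constructs the basis explicitly via the combinatorially defined functions $\psi_j(r)=e^{-r}g_j(1/r)$ of Sections \ref{combsec}--\ref{Sec_ODE} (and imposes the boundary conditions in the equivalent $\Delta^k h,\ (\Delta^k h)'$ form), whereas you invoke the half-integer Bessel structure more abstractly; your argument is slightly less constructive but carries the same content.
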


\medskip
\noindent
It seems possible that one might be able to take the coefficients in the polynomials 
featuring in the rational function to be nonnegative, but this has not been verified in general. 
One can show that the denominator can be taken to be a polynomial of degree strictly less than 
$\frac{3n^2 - 2n +7}{8}$.

\medskip
\noindent
There remain the tantalising questions of whether magnitude is a valuation in dimension three -- and indeed 
whether the convex magnitude conjecture is true in general in dimension three -- and of what happens in even 
dimensions, especially dimensions two and four. Is $|RX|$ a rational function of $R$ 
for general compact convex bodies $X$ in odd dimensions? If so, do the coefficients of the polynomials 
have a geometric significance? (See Section \ref{sec_final} for a brief 
discussion of cuboids and ellipsoids in three-dimensional space.)

\medskip
\noindent
\subsection{Structure of the paper.}
The paper falls into three main parts. 

\medskip
\noindent
In the first part, in Section \ref{sec:asymp}, we begin by giving the asymptotic results of 
Theorem \ref{asymprat}, based on little more than elementary Fourier analysis. 

\medskip
\noindent
In the second part we develop the general PDE theory for our problem.
In Section \ref{sec_Sob} we set out the basic facts about Sobolev spaces 
which we shall need. In Section \ref{sec_var}
we formulate the variational problem for whose solution we develop a formula in Section \ref{sec_formula}.

\medskip
\noindent
In the third part we focus on the spherically symmetric situation. In Section \ref{combsec} we give some 
combinatorial preliminaries. In Section \ref{Sec_ODE} we find the general solution of the PDE problem 
from Section \ref{sec_var} in the spherically symmetric case. In Section \ref{sec:alg} we fit the 
boundary conditions and develop an 
algorithm to identify our explicit solution and its magnitude. The discussion here leads to the proof 
of Theorem \ref{rat} in Subsection \ref{sec:pf}. In Section \ref{sec:imp} we implement 
the algorithm in dimensions $1$, $3$, $5$ and $7$, leading to Theorems \ref{zxc} and \ref{cxz}. 

\medskip
\noindent
Finally, in Section \ref{sec_final} we make some concluding remarks.

\medskip
\noindent
\subsection{Acknowledgements.}
\noindent
The authors would like to thank Tom Leinster for introducing them to
the fascinating and beautiful subject of magnitude, and for patiently and carefully explaining its
category-theoretic origins in a series of seminars and other discussions at the University
of Edinburgh. They would also like to thank Mark Meckes for a number of illuminating email exchanges and comments.
Finally, they would like to thank the referees whose careful reading of the manuscript has led to several 
clarifications and presentational improvements. 

\section{Asymptotic results}\label{sec:asymp}
\noindent
We prove the asymptotic statements of Theorem \ref{asymprat} which are based on elementary 
Fourier analysis.


\medskip
\noindent
Let us first consider the asymptotic behaviour as $R \to \infty$. 
If $f \in H^{(n+1)/2}(\mathbb{R}^n)$ satisfies $f \equiv 1$ on a compact set $X$, it must manifestly
satisfy $\|f\|^2_{H^{(n+1)/2}(\mathbb{R}^n)} \geq \|f\|^2_{L^2(\mathbb{R}^n)} \geq {\rm Vol}(X)$, so that 
\begin{equation}\label{wsx}
|X| \geq \frac{{\rm Vol}(X)}{n! \omega_n}
\end{equation}
as has been observed by Leinster \cite{Lein13} and Meckes \cite{Meckes14}. 
But also
\begin{equation*}
\begin{aligned}
\|f(R^{-1} \cdot)\|^2_{H^{(n+1)/2}(\mathbb{R}^n)} 
&= \int_{\mathbb{R}^n} |R^n \widehat{f}(R \xi)|^2 (1 + 4 \pi^2
|\xi|^2)^{(n+1)/2} {\rm d}\xi \\
&= R^n \int_{\mathbb{R}^n} |\widehat{f}(\xi)|^2 (1 + 4 \pi^2
(|\xi|/R)^2)^{(n+1)/2} {\rm d}\xi
\end{aligned}
\end{equation*}
so that, by the monotone (or dominated) convergence theorem, 
\begin{equation}\label{gtb}
R^{-n}\|f(R^{-1} \cdot)\|^2_{H^{(n+1)/2}(\mathbb{R}^n)} \to \int_{\mathbb{R}^n} |\widehat{f}(\xi)|^2 {\rm d}\xi =  
\int_{\mathbb{R}^n} |f(x)|^2 {\rm d} x
\end{equation}
as $R \to \infty$. So we have 
$$\frac{{\rm Vol}(X)}{n! \omega_n} \leq R^{-n}|RX| \leq \frac{R^{-n} \|f(R^{-1} \cdot)\|^2_{H^{(n+1)/2}(\mathbb{R}^n)}}{n! \omega_n}
\to \frac{\int_{\mathbb{R}^n} |f(x)|^2 {\rm d} x}{n! \omega_n}$$
using \eqref{wsx}, \eqref{def} and \eqref{gtb} successively. Now we can find 
$f \in  H^{(n+1)/2}(\mathbb{R}^n)$ with $f \equiv 1$ on $X$ such that $\|f\|_2^2$ is as close 
as we like to ${\rm Vol}(X)$. Indeed, with $\Phi : \mathbb{R}^n \to [0,\infty)$ a smooth function of compact support
in $\{|x| \leq 1\}$ with $\int \Phi = 1$, $\Phi_r(x) := r^{-n} \Phi( r^{-1}x)$ and 
$X_r := \{x \in \mathbb{R}^n \, : \, d(x,X) \leq r\}$ we have that $f_r := \Phi_r \ast \chi_{X_r}$ is a nonnegative 
smooth function of compact support which satisfies $f_r(x) = 1$ for $x \in X$, and $f_r(x) \to 0$ as $r \to 0$ 
for $x \notin X$. So by the dominated convergence theorem, $\int f_r^2 \to {\rm Vol}(X)$ as $r \to 0$. Therefore 
$$ R^{-n}|RX| \to  \frac{{\rm Vol}(X)}{n! \omega_n}$$
as $R \to \infty$. 

\medskip
\noindent
Now let us consider what happens as $R \to 0$. Note that we may assume that $n$ is odd since as we 
remarked above in Section \ref{Intro}, magnitude is intrinsically defined.

\medskip
\noindent
If $f \in H^{(n+1)/2}(\mathbb{R}^n)$ and $f(0) = 1$,
then 
$$ 1 = |f(0)| \leq \sup_x |f(x)| \leq \|\widehat{f}\|_{L^1(\mathbb{R}^n)}$$ 
$$\leq \left(\int |\widehat{f}(\xi)|^2(1 + 4 \pi^2 |\xi|^2)^{(n+1)/2} {\rm d}\xi \right)^{1/2} 
\left(\int \frac{{\rm d}\xi}{(1 + 4 \pi^2 |\xi|^2)^{(n+1)/2}} \right)^{1/2}$$ 
so that
$$ \| f \|^2_{H^{(n+1)/2}(\mathbb{R}^n)} \geq \left(\int \frac{{\rm d}\xi}{(1 + 4 \pi^2 |\xi|^2)^{(n+1)/2}} \right)^{-1}$$
with equality if and only if $\widehat{f}(\xi)$ is the scalar multiple of $(1 + 4 \pi^2 |\xi|^2)^{-(n+1)/2}$ with
$f(0) =1$, or, equivalently, $f(x) = e^{-|x|}$. By \eqref{FT} and
Fourier inversion, 
$$\int \frac{{\rm d}\xi}{(1 + 4 \pi^2 |\xi|^2)^{(n+1)/2}} =
\frac{1}{n! \omega_n} \int \widehat{e^{-|\cdot|}}(\xi) {\rm d} \xi =
\frac{1}{n! \omega_n}e^{-|0|} = \frac{1}{n! \omega_n}$$
so that 
\begin{equation}\label{qaz}
\| f \|^2_{H^{(n+1)/2}(\mathbb{R}^n)} \geq n! \omega_n
\end{equation}
(with equality if and only if $f(x) = f_0(x) := e^{-|x|}$). This implies that $|X| \geq 1$ 
so long as $X \neq \emptyset$. 

\medskip
\noindent
Now, for $0 < R \leq 1$, let $f_R: \mathbb{R}^n \to [0,1]$ be 
a smooth function which satisfies 
\begin{equation*}
f_R(x) =
\left\{   
\begin{array}{llll}
1, \; &|x| \leq R \vspace{0.25cm}\\
e^{R} e^{-|x|}, &|x| \geq R^{1/2}
\end{array}     \right.
\end{equation*}
and, for $R \leq |x| \leq R^{1/2}$ and $|\alpha| \geq 1$,
\begin{equation*}
\left|{\left(\frac{\partial}{\partial x}\right)}^{\alpha} f_R(x)\right| \lesssim R^{-(|\alpha|-1)/2}.
\end{equation*}
We then have, (see \eqref{xsd1}), 
$$\|f_R\|^2_{H^{(n+1)/2}(\mathbb{R}^n)} = \sum_{j=0}^{(n+1)/2}{\frac{n+1}{2} \choose j} \int_{\mathbb{R}^n} |D^j f_R(x)|^2 {\rm d}x $$
$$=  \sum_{j=0}^{(n+1)/2}{\frac{n+1}{2} \choose j} \int_{|x| \leq R^{1/2}} |D^j f_R(x)|^2 {\rm d}x 
+  e^{2R}\sum_{j=0}^{(n+1)/2}{\frac{n+1}{2} \choose j} \int_{|x| \geq R^{1/2}} |D^j f_0(x)|^2 {\rm d}x$$
$$ = I + II.$$

\medskip
\noindent
We can estimate $I$ by the term corresponding to $j = (n+1)/2$, that is, 
$$ I \lesssim R^{-((n+1)/2 -1)}R^{n/2}= R^{1/2},$$
and by the dominated convergence theorem, $II$ tends to $\|f_0\|^2_{H^{(n+1)/2}(\mathbb{R}^n)}$ as $R \to 0$. 
Therefore $$\|f_R\|^2_{H^{(n+1)/2}} \to \|f_0\|^2_{H^{(n+1)/2}} = n! \omega_n$$ 
as $R \to 0$.
This shows that $|B(0,R)| \to 1$ as $R \to 0$. 

\medskip
\noindent
Since for any nonempty compact set $X$ we have
(after suitable translation) $\{0\} \subseteq RX \subseteq B(0, RM)$ for some $M >0$, we immediately 
deduce that $|RX| \to 1$ as $R \to 0$. (We thank Mark Meckes for pointing out this last implication to us.)
 
\section{Preliminaries on Sobolev spaces}\label{sec_Sob}
\noindent
Let $\Omega$ be an arbitrary open set in $\mathbb{R}^n$. We need to consider 
some variants of the Sobolev spaces $H^m(\Omega)$ consisting of complex-valued functions 
whose weak derivatives of order up to and including $m \in \mathbb{N}$ belong to 
$L^2(\Omega)$. We shall assume the standard properties of $H^m(\mathbb{R}^n)$ and 
$H^m(\Omega)$ without specific mention. See \cite{Evans} or \cite{Stein} for more details.

\medskip
\noindent
Firstly, the space $H^m_0(\Omega)$ 
is the completion of $C^\infty_c(\Omega)$, the class of smooth functions of 
compact support inside $\Omega$, under the inner product 
\begin{equation}\label{xsd}
\langle f,g \rangle_{H^m(\Omega)} := \sum_{j=0}^m {{m}\choose{j}} \int_{\Omega} D^j f \cdot \overline{D^{j} g}
\end{equation}
where 
$D^j f = \Delta^{j/2} f$ for $j$ even and $D^j f = \nabla \Delta^{(j-1)/2} f$ for
$j$ odd. Here and throughout 
$$\nabla f := \left(\frac{\partial f}{\partial x_1}, \dots, \frac{\partial f}{\partial x_n} 
\right) \mbox{ and } \Delta f = \sum_{i=1}^n \frac{\partial^2 f}{\partial x_i^2}.$$
The inner product $\langle \cdot, \cdot \rangle_{H^m}$ gives rise to the 
norm $\| \cdot \|_{H^m}$ and when $\Omega = \mathbb{R}^n$ we can easily see using 
the Fourier transform that
$$ \|f\|_{H^m(\mathbb{R}^n)}^2 = \|(I - \Delta)^{m/2} f\|_{L^2(\mathbb{R}^n)}^2.$$ 

\medskip
\noindent
We shall be working from now on with real-valued functions, and so we shall suppress 
the complex conjugate occurring in \eqref{xsd}.

\medskip
\noindent
Let $K \subseteq \mathbb{R}^n$ be compact and convex with nonempty interior.
If $f \in H^m(\mathbb{R}^n)$ and 
$g \in H^m({\rm{ int } }\, K)$ we say that $f = g$ on $K$ if 
$\| f-g \|_{H^m({\rm{ int } }\, K)} = 0$. If $f = g$ on $K$ in this sense, then clearly $f = g$ 
almost everywhere on int $K$; conversely if $f \in H^m(\mathbb{R}^n)$, 
$g \in H^m({\rm{ int } }\, K)$ and $f = g$ almost everywhere on $ \rm{int} \, K$, 
then the weak derivatives of $f$ and $g$ of order up to and including $m$ on $\rm{int} 
 \, K$ coincide, and so $f=g$ on $K$. If $m > n/2$ the Sobolev embedding theorem implies that 
the functions in $H^m(\mathbb{R}^n)$ are continuous and hence make sense pointwise on $K$; so the statement
$f=g$ on $K$ can be interpreted pointwise in this case. 


\medskip
\noindent
With this in mind, for our second variant we define
$$\tilde{H}^m_0(\mathbb{R}^n \setminus K) := \{ f \in  H^m(\mathbb{R}^n) \; : \; f = 0 \mbox { on } \, K \}.$$
This is clearly a closed subspace of $H^m(\mathbb{R}^n)$, and there are natural 
isometric embeddings
$$ H^m_0(\mathbb{R}^n \setminus K) \hookrightarrow \,  \tilde{H}^m_0(\mathbb{R}^n \setminus K) \hookrightarrow \, H^m(\mathbb{R}^n).$$
Under our hypothesis that the interior of $K$ is nonempty, the first embedding is surjective:


\begin{lemma}\label{Sob}
If $K$ has nonempty interior then for any $f \in H^m(\mathbb{R}^n)$ such that $f = 0$ on $K$, and 
any $\epsilon > 0$, there is a $\phi \in C^\infty_c(\mathbb{R}^n \setminus K)$ such that 
$\|f - \phi \|_{H^m(\mathbb{R}^n)} < \epsilon.$
\end{lemma}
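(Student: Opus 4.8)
The statement is a density assertion: functions in $H^m(\mathbb{R}^n)$ vanishing on the compact convex body $K$ can be approximated in the $H^m$ norm by smooth functions compactly supported away from $K$. I would prove this in two stages. First, reduce to the case where $f$ vanishes on a neighbourhood of $K$ (not just on $K$ itself); second, cut off $f$ to have compact support and mollify. The second stage is entirely routine — once $f$ vanishes near $K$, multiply by a smooth bump $\eta_R$ equal to $1$ on $B(0,R)$ and supported in $B(0,2R)$, which for large $R$ changes the $H^m$ norm by an arbitrarily small amount (since $f \in H^m$ and $m$ is an integer, the error involves derivatives of $\eta_R$ of size $R^{-k}$ times tails of $f$ and its derivatives); then convolve with a standard mollifier $\Phi_\delta$ of support radius $\delta$ small enough that the support stays inside $\mathbb{R}^n \setminus K$ and, by the standard approximation property of mollifiers in $H^m$, the resulting smooth compactly supported function is within $\epsilon$ of $f$. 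So the whole difficulty is concentrated in the first stage.

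\textbf{The main step.} The crux is: given $f \in H^m(\mathbb{R}^n)$ with $f = 0$ on $K$, approximate $f$ in $H^m(\mathbb{R}^n)$ by functions vanishing on a \emph{neighbourhood} of $K$. This is exactly where the hypothesis that $K$ has nonempty interior is used. Here is where I would exploit convexity. Fix an interior point of $K$, which after translation we may take to be the origin, so that $B(0,r) \subseteq K$ for some $r > 0$. For $\lambda > 1$ consider the dilates $f_\lambda(x) := f(\lambda x)$. Since $f$ vanishes on $K$, the function $f_\lambda$ vanishes on $\lambda^{-1} K$, and by convexity of $K$ together with $0 \in \operatorname{int} K$ we have $K \subseteq \operatorname{int}(\lambda^{-1}K)\,$... wait, that is backwards: $\lambda^{-1}K \subsetneq K$. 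So instead I would dilate the \emph{other} way, or rather keep $f$ fixed and shrink $K$: the point is that $f$ vanishes on $K \supseteq \mu K$ for $0 < \mu < 1$, and $\mu K$ is a neighbourhood of... no. Let me restate it correctly: take $f_\lambda(x) = f(\lambda x)$ with $\lambda > 1$; then $f_\lambda$ vanishes on the set $\{x : \lambda x \in K\} = \lambda^{-1}K$, and since $0 \in \operatorname{int} K$ and $K$ is convex, $K \subseteq \lambda^{-1}(\operatorname{int} K)$ is false but $\lambda^{-1}K$ \emph{does} contain a fixed ball $B(0, r/\lambda)$ — that's not enough either. The correct move is the opposite dilation: set $f_\lambda(x) = f(\lambda^{-1} x)$ for $\lambda > 1$; then $f_\lambda$ vanishes on $\lambda K \supseteq K$, and by convexity and $0\in\operatorname{int}K$, $K$ is contained in the \emph{interior} of $\lambda K$ (every boundary point $x$ of $K$ has $\lambda x \notin K$ for $\lambda>1$, and one checks $K \subseteq \operatorname{int}(\lambda K)$ since $K$ is compact and $\lambda K$ is its strict dilate about an interior point). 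So $f_\lambda$ vanishes on a neighbourhood of $K$. It remains to check $f_\lambda \to f$ in $H^m(\mathbb{R}^n)$ as $\lambda \to 1^+$, which is the standard continuity of dilation in $L^2$ applied to each derivative $D^j f$ up to order $m$ (here $m$ being an integer is what lets us use the inner product \eqref{xsd} expressed through ordinary derivatives), combined with the scaling factors $\lambda^{\mathrm{something}} \to 1$.

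\textbf{Expected obstacle and remarks.} The only genuinely delicate point is the purely geometric claim that for a compact convex $K$ with $0$ in its interior and $\lambda>1$, one has $K \subseteq \operatorname{int}(\lambda K)$; this follows because the Minkowski gauge $p_K$ of $K$ satisfies $p_K \leq 1$ on $K$ with $p_K$ continuous, $K$ compact, and $\lambda K = \{p_K \leq \lambda\}$ whose interior is $\{p_K < \lambda\}$, while $p_K \leq 1 < \lambda$ on $K$; nonempty interior guarantees $p_K$ is finite and continuous. Everything else — continuity of translation/dilation on $L^2$, the truncation estimate, and mollification in $H^m$ — is standard and I would only cite it. One small bookkeeping item: after dilating we get $f_\lambda \in H^m(\mathbb{R}^n)$ vanishing on an open neighbourhood $U \supseteq K$; when we then truncate and mollify at scale $\delta < \operatorname{dist}(K, \partial U)$, the mollified truncation is genuinely in $C^\infty_c(\mathbb{R}^n \setminus K)$, so composing the two stages with each contributing less than $\epsilon/2$ gives the result.
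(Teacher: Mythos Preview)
Your argument is correct. The dilation step you eventually settle on --- take $0\in\operatorname{int}K$, set $f_\lambda(x)=f(\lambda^{-1}x)$ for $\lambda>1$, observe via the Minkowski gauge that $K\subset\operatorname{int}(\lambda K)$, and use continuity of dilations on each $D^j f$ in $L^2$ --- is exactly the right idea, and the subsequent truncation-plus-mollification stage is routine as you say. (You should of course tidy up the exploratory back-and-forth about which way to dilate before submitting anything; only the final version with $f_\lambda(x)=f(\lambda^{-1}x)$ belongs in a write-up.)

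By way of comparison: the paper does not give an argument at all but simply cites Theorem~5.29 of Adams--Fournier, \emph{Sobolev Spaces}. That theorem handles more general open sets $\Omega$ satisfying the \emph{segment condition} (for each boundary point there is a nonzero vector $y$ and a neighbourhood $U$ such that $x+ty\in\Omega$ for $x\in\overline{\Omega}\cap U$ and $0<t<1$), and its proof proceeds by local translations along those segments followed by mollification. For a compact convex $K$ with nonempty interior, the complement $\mathbb{R}^n\setminus K$ certainly satisfies the segment condition, but your global dilation about an interior point is a cleaner special-case substitute for the local translation machinery: convexity (indeed star-shapedness about an interior point) lets a single one-parameter family of dilations do the work that Adams--Fournier accomplish with a partition of unity and local pushes. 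So your route is more elementary and self-contained in the present setting, while the paper's citation covers a wider class of domains at the cost of importing heavier machinery.
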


\begin{proof}
See for example Theorem 5.29 of \cite{AF}.
\end{proof}


\medskip
\noindent
We need some more lemmas. 

\begin{lemma}\label{approx}
If $f$ is in $H^m(\mathbb{R}^n)$, $K$ is any compact subset of $\mathbb{R}^n$ and $\epsilon > 0$ 
we may choose a smooth cut-off function $\psi$ which is identically $1$ on $K$ and is such that if 
$\tilde f = \psi f$, then $\|f - \tilde f \|_{H^m(\mathbb{R}^n)} < \epsilon$.
\end{lemma}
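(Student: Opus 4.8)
The plan is to take $\psi$ to be a smooth cut-off which is identically $1$ on a large ball containing $K$, so that $f - \tilde f = (1-\psi)f$ is supported far out, where the tail of $f$ in $H^m(\mathbb{R}^n)$ is small. Concretely, I would fix once and for all a function $\psi_0 \in C^\infty_c(\mathbb{R}^n)$ with $0 \le \psi_0 \le 1$, $\psi_0 \equiv 1$ on $B(0,1)$ and $\operatorname{supp}\psi_0 \subseteq B(0,2)$, and for $R \ge 1$ set $\psi_R(x) := \psi_0(x/R)$. Then $\psi_R \equiv 1$ on $B(0,R)$, one has $|\partial^\alpha\psi_R| \le C_\alpha R^{-|\alpha|} \le C_\alpha$ uniformly in $R \ge 1$, and for $|\alpha| \ge 1$ the function $\partial^\alpha\psi_R$ is supported in the annulus $\{R \le |x| \le 2R\}$. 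Since $\psi_R$ has bounded derivatives of all orders, the Leibniz rule shows $\psi_R f \in H^m(\mathbb{R}^n)$ whenever $f \in H^m(\mathbb{R}^n)$.

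Next I would show that $\|(1-\psi_R)f\|_{H^m(\mathbb{R}^n)} \to 0$ as $R \to \infty$. It suffices to make $\sum_{|\alpha|\le m}\|\partial^\alpha((1-\psi_R)f)\|^2_{L^2(\mathbb{R}^n)}$ small, as by Plancherel this is comparable to $\|(1-\psi_R)f\|^2_{H^m(\mathbb{R}^n)}$ and one then relabels $\epsilon$; alternatively one works directly with the operators $D^j$ appearing in \eqref{xsd} and expands $D^j((1-\psi_R)f)$ using the Leibniz rules for $\nabla$ and $\Delta$. Either way, $\partial^\alpha((1-\psi_R)f)$ is a finite linear combination of terms $(\partial^\beta(1-\psi_R))\,(\partial^{\alpha-\beta}f)$ with $\beta \le \alpha$ and $|\alpha - \beta| \le m$. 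The term with $\beta = 0$ has $L^2$ norm at most $\|\partial^\alpha f\|_{L^2(|x|\ge R)}$, since $1-\psi_R$ vanishes on $B(0,R)$ and is bounded by $1$; each term with $|\beta|\ge 1$ has $L^2$ norm at most $C_\beta\|\partial^{\alpha-\beta}f\|_{L^2(R \le |x| \le 2R)}$ by the support and uniform bound on $\partial^\beta\psi_R$. Since every derivative $\partial^\gamma f$ with $|\gamma|\le m$ lies in $L^2(\mathbb{R}^n)$, dominated convergence gives $\int_{|x|\ge R}|\partial^\gamma f|^2 \to 0$ as $R \to \infty$, so all of these terms tend to $0$, proving the claim.

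To finish, I would choose $R$ large enough that $K \subseteq B(0,R)$ — so that $\psi := \psi_R$ is identically $1$ on $K$ — and also large enough that $\|(1-\psi_R)f\|_{H^m(\mathbb{R}^n)} < \epsilon$; then $\tilde f := \psi f$ satisfies $\|f - \tilde f\|_{H^m(\mathbb{R}^n)} = \|(1-\psi)f\|_{H^m(\mathbb{R}^n)} < \epsilon$, as required. This lemma is essentially routine; the only point that needs a little care is the bookkeeping in the Leibniz expansion together with the observation that the derivatives landing on the cut-off are supported in the far annulus $\{R \le |x| \le 2R\}$, where the $L^2$ mass of the derivatives of $f$ is negligible for large $R$.
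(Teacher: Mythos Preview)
Your proof is correct and follows essentially the same approach as the paper: both take a dilated bump $\psi_R(x)=\Phi(x/R)$, use the Leibniz rule together with the uniform bounds on derivatives of $\psi_R$ to control $\|(1-\psi_R)f\|_{H^m}$ by the tails $\int_{|x|\ge R}|D^j f|^2$, and then let $R\to\infty$. Your write-up is simply more detailed than the paper's two-line argument.
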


\begin{proof}
If $\Phi$ is a standard normalised bump function which is identically one on the unit ball, and zero
off the ball of radius $2$ we have
$$\|f (1- \Phi(\cdot/R))\|_{H^m(\mathbb{R}^n)} \leq C  \sum_{j=0}^m {{m}\choose{j}} \int_{|x| \geq R } |D^j f|^2$$
and for $f$ in $H^m(\mathbb{R}^n)$ we have
$$ \sum_{j=0}^m {{m}\choose{j}} \int_{|x| \geq R } |D^j f|^2 \to 0 \mbox{ as } R \to \infty.$$
\end{proof}

\begin{lemma}\label{boundary}
Suppose that $f \in H^m(\mathbb{R}^n)$ and that $f =1$ on $K$ where $K$ has nonempty interior.
Suppose that $\psi$ is a smooth cut-off function which is identically $1$ on a neighbourhood of $K$. 
Let $\tilde f(x) = (f(x) -1) \psi(x)$.
Then $\tilde f \in  H^m_0(\mathbb{R}^n \setminus K)$. 
\end{lemma}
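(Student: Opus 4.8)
The plan is to verify in turn the three properties that together say $\tilde f \in H^m_0(\mathbb{R}^n \setminus K)$: that $\tilde f$ lies in $H^m(\mathbb{R}^n)$, that $\tilde f$ vanishes on $K$ in the sense of Section \ref{sec_Sob}, and then to invoke Lemma \ref{Sob} to pass from membership in $\tilde{H}^m_0(\mathbb{R}^n \setminus K)$ to membership in $H^m_0(\mathbb{R}^n \setminus K)$.

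First I would write $\tilde f = f\psi - \psi$. Since $\psi \in C^\infty_c(\mathbb{R}^n)$ it belongs to $H^m(\mathbb{R}^n)$; and since $f \in H^m(\mathbb{R}^n)$ while $\psi$ is smooth with all derivatives bounded, the Leibniz rule together with the definition \eqref{xsd} gives the standard bound $\|f\psi\|_{H^m(\mathbb{R}^n)} \leq C_\psi \|f\|_{H^m(\mathbb{R}^n)}$, so $f\psi \in H^m(\mathbb{R}^n)$. Hence $\tilde f \in H^m(\mathbb{R}^n)$.

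Next I would check that $\tilde f = 0$ on $K$. Let $U$ be the neighbourhood of $K$ on which $\psi \equiv 1$; then $\psi \equiv 1$ on ${\rm int}\, K \subseteq K \subseteq U$, so on ${\rm int}\, K$ we have $\tilde f = f - 1$ as elements of $H^m({\rm int}\, K)$. By hypothesis $f = 1$ on $K$, that is $\|f - 1\|_{H^m({\rm int}\, K)} = 0$, and therefore $\|\tilde f\|_{H^m({\rm int}\, K)} = 0$, which is precisely the statement that $\tilde f = 0$ on $K$. Thus $\tilde f \in \tilde{H}^m_0(\mathbb{R}^n \setminus K)$.

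Finally, since $K$ has nonempty interior, Lemma \ref{Sob} says that the inclusion $H^m_0(\mathbb{R}^n \setminus K) \hookrightarrow \tilde{H}^m_0(\mathbb{R}^n \setminus K)$ is surjective; concretely, one approximates $\tilde f$ in the $H^m(\mathbb{R}^n)$ norm by functions in $C^\infty_c(\mathbb{R}^n \setminus K)$. Hence $\tilde f \in H^m_0(\mathbb{R}^n \setminus K)$, as claimed. The one point requiring care — and the step I would be most deliberate about — is the bookkeeping around the meaning of ``$f = 1$ on $K$'': one must confirm that the multiplication by $\psi$, harmless on ${\rm int}\, K$ because $\psi \equiv 1$ there, interacts correctly with the definition $\|\,\cdot\,\|_{H^m({\rm int}\, K)} = 0$. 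Everything else is routine, being either the standard boundedness of multiplication by a compactly supported smooth function on $H^m$, or a direct appeal to Lemma \ref{Sob}.
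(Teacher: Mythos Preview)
Your proof is correct and follows essentially the same approach as the paper's: show $\tilde f \in H^m(\mathbb{R}^n)$, check $\tilde f = 0$ on $K$, then invoke Lemma~\ref{Sob}. The paper's version is terser---it simply asserts the first two steps with the bound $\|\tilde f\|_{H^m(\mathbb{R}^n)} \leq C(\|f\|_{H^m(\mathbb{R}^n)} + 1)$---whereas you have spelled out the Leibniz-rule justification and the verification that multiplication by $\psi$ preserves the condition $\|f-1\|_{H^m(\mathrm{int}\,K)} = 0$.
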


\begin{proof}
Clearly $\tilde f$ belongs to $H^m(\mathbb{R}^n)$ with $\|\tilde f\|_{H^m(\mathbb{R}^n)} 
\leq C (\| f\|_{H^m(\mathbb{R}^n)}+1)$ where $C$ depends on $\psi$ and $K$. Moreover $\tilde f = 0$ on $K$.
So by Lemma \ref{Sob}, $\tilde f$ belongs to $H^m_0(\mathbb{R}^n \setminus K)$ with the same control.
\end{proof}


\medskip
\noindent
For convenience we also recall the facts about traces we shall need from \cite{AF} or \cite{Evans}. 
(See also \cite{AH}.)
In the lemma which follows, $\mathcal{D}^j$ denotes any differential operator of order $j$ with constant 
coefficients. (We shall apply it in the special case that $\mathcal{D}^j = \Delta^{j/2}$ for $j$ even 
and $\mathcal{D}^j = (\nu \cdot \nabla)\Delta^{(j-1)/2}$ for $j$ odd, where $\nu$ is some unit vector.)

\begin{lemma}\label{trace}
Suppose $f \in H^m(\mathbb{R}^n)$ and that $1 \leq j \leq m-1$. Then
\begin{enumerate}
\item[(i)]
If $S$ is a piecewise smooth compact hypersurface, then $ \mathcal{D}^j f$ belongs to $H^{m-j-1/2}(S)$ and
$$ \|\mathcal{D}^j f|_{S} \|_{H^{m-j-1/2}(S)} \leq C \|f \|_{H^m(\mathbb{R}^n)};$$
moreover the map $f \mapsto \mathcal{D}^j f|_{S}$ varies continuously with small changes in $S$. 
\item[(ii)]
If $K$ is a compact convex set in $\mathbb{R}^n$ with nonempty interior and $f \in H^m_0(\mathbb{R}^n \setminus K)$, 
then  $\mathcal{D}^j f|_{\partial K} = 0$.

\end{enumerate}
\end{lemma}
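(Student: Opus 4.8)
The plan is to deduce both parts from the classical trace inequality on the half-space together with a routine localisation; everything here is standard (see \cite{AF}, \cite{Evans}), and the point is just to organise the pieces.

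For (i), observe first that $\mathcal{D}^j$ is a constant-coefficient differential operator of order $j$, so its Fourier multiplier is a polynomial of degree at most $j$; hence $|\widehat{\mathcal{D}^j f}(\xi)| \lesssim (1+|\xi|^2)^{j/2}|\widehat{f}(\xi)|$, and $\mathcal{D}^j f \in H^{m-j}(\mathbb{R}^n)$ with $\|\mathcal{D}^j f\|_{H^{m-j}(\mathbb{R}^n)} \le C\|f\|_{H^m(\mathbb{R}^n)}$. Since $j \le m-1$ we have $s := m-j \ge 1 > 1/2$, so it suffices to show that restriction $g \mapsto g|_S$ is bounded from $H^s(\mathbb{R}^n)$ to $H^{s-1/2}(S)$ and to apply it with $g = \mathcal{D}^j f$. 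Covering $S$ by finitely many charts subordinate to a partition of unity, in each of which $S$ is a smooth graph, and flattening by a local diffeomorphism, one reduces to the case $S = \mathbb{R}^{n-1}\times\{0\}$; there the estimate $\|g(\cdot,0)\|_{H^{s-1/2}(\mathbb{R}^{n-1})} \le C\|g\|_{H^s(\mathbb{R}^n)}$ is the usual one, obtained by writing the partial Fourier transform of $g(\cdot,0)$ in $x'$ as $\int_{\mathbb{R}}\widehat{g}(\xi',\xi_n)\,d\xi_n$ and applying Cauchy--Schwarz in $\xi_n$ against the weight $(1+|\xi'|^2+\xi_n^2)^{-s}$, whose $\xi_n$-integral is comparable to $(1+|\xi'|^2)^{1/2-s}$ precisely because $s>1/2$. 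Undoing the changes of variables then gives the bound on $S$.

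For the continuity of $f \mapsto \mathcal{D}^j f|_S$ under small perturbations of $S$, I would treat $f \in C^\infty_c(\mathbb{R}^n)$ first, where it is clear because $\mathcal{D}^j f$ is a fixed smooth function and the flattening charts and their Jacobians depend continuously on $S$, and then pass to general $f \in H^m(\mathbb{R}^n)$ by density, using that the bound just obtained can be taken \emph{uniform} over a small family of surfaces. Establishing this uniformity is the only genuinely delicate point: one has to choose the charts, partition of unity and diffeomorphisms so that the constants coming from the changes of variables do not degenerate as $S$ varies, which is possible over a compact parameter set since the whole construction can be arranged to depend continuously on $S$; the standard $3\epsilon$-argument then yields continuity.

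Finally, (ii) follows from (i) by approximation. By definition of $H^m_0(\mathbb{R}^n\setminus K)$ there are $\phi_k \in C^\infty_c(\mathbb{R}^n\setminus K)$ with $\phi_k \to f$ in $H^m(\mathbb{R}^n)$, and hence $\mathcal{D}^j\phi_k \to \mathcal{D}^j f$ in $H^{m-j}(\mathbb{R}^n)$ by the mapping property noted above. Each $\phi_k$ has compact support disjoint from the compact set $K$, hence vanishes on a neighbourhood of $K$, so $\mathcal{D}^j\phi_k$ vanishes near $\partial K$ and in particular $\mathcal{D}^j\phi_k|_{\partial K} = 0$. Since $\partial K$ is a Lipschitz compact hypersurface --- smooth, hence piecewise smooth, in the cases of interest here --- the bounded restriction map of (i) gives $\mathcal{D}^j\phi_k|_{\partial K} \to \mathcal{D}^j f|_{\partial K}$ in $H^{m-j-1/2}(\partial K)$, and since every term on the left vanishes, so does the limit $\mathcal{D}^j f|_{\partial K}$.
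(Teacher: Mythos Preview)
Your argument is correct and follows essentially the same line as the paper's proof. For part (i), both you and the paper first observe that $\mathcal{D}^j$ maps $H^m(\mathbb{R}^n)$ boundedly into $H^{m-j}(\mathbb{R}^n)$ and then invoke the classical trace inequality; you simply supply the standard details (localisation, flattening, Cauchy--Schwarz in the normal Fourier variable) that the paper cites out to \cite{AF} and \cite{Evans}. The $3\epsilon$ density argument you give for continuity in $S$ is exactly the one the paper uses.

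For part (ii) there is a mild difference in packaging. You approximate $f$ by $\phi_k \in C^\infty_c(\mathbb{R}^n\setminus K)$ and apply the bounded trace operator on $\partial K$ itself. The paper instead appeals to the classical characterisation of traces of $H^m_0$ functions, and as an alternative uses the continuity statement from (i) on smooth hypersurfaces $S$ lying in the interior of $K$ (where $f$ and its derivatives vanish) and lets $S \to \partial K$. The paper's alternative has the small advantage of only invoking the trace inequality on smooth surfaces, so one need not worry about whether $\partial K$ is piecewise smooth for a general convex body; your hedge ``smooth, hence piecewise smooth, in the cases of interest here'' covers the applications in the paper (balls), and of course the trace theorem extends to Lipschitz boundaries anyway, so this is a minor point.
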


\begin{proof}
(i) Since $f \in H^m(\mathbb{R}^n)$ we have $\mathcal{D}^j f \in H^{m-j}(\mathbb{R}^n)$ (with norm control) 
and so by the classical trace inequality its restriction to the hypersurface $S$ is in 
$H^{m-j-1/2}(S)$ provided $m-j \geq 1$, again with norm control. To see the continuity, suppose $S$ and $S'$ 
are nearby hypersurfaces, and take $\phi \in C^\infty_c(\mathbb{R}^n)$ and $\epsilon > 0$ such that 
$\| f - \phi \|_{H^m(\mathbb{R}^n)} < \epsilon$. Then
$$\|\mathcal{D}^j f|_{S} - \mathcal{D}^j f|_{S'}\|_{H^{m-j-1/2}}$$
$$\leq \| (\mathcal{D}^j f - \mathcal{D}^j \phi)|_{S}\|_{H^{m-j-1/2}}
 + \|\mathcal{D}^j \phi|_{S} - \mathcal{D}^j \phi|_{S'} \|_{H^{m-j-1/2}}
+ \| (\mathcal{D}^j \phi - \mathcal{D}^j) f|_{S'}\|_{H^{m-j-1/2}}.$$
The first and third terms are dominated by $\epsilon$ by the first part and the second term 
goes to zero as $S' \to S$ since $\phi$ is smooth. 

\medskip
\noindent
(ii) This is a consequence of the classical characterisation of the traces of functions in 
the Sobolev spaces $ H^m_0(\mathbb{R}^n \setminus K)$. Alternatively, it follows from the previous 
part since for smooth hypersurfaces contained in the interior of $K$ we will have $\mathcal{D}^j f|_S = 0$ for all $j \geq 1$.


\end{proof}

\section{The variational problem for the extremal energy}\label{sec_var}
\noindent
In this section we consider the variational problem
$$\inf \{ \|f\|_{H^m(\mathbb{R}^n)} \, : \,  f \in H^m(\mathbb{R}^n), \; f = g \mbox{ on } K\}$$
where $g$ is a prescribed member of $H^m(\mathbb{R}^n)$, $m \in \mathbb{N}$ and $K$ is a compact 
convex set in $\mathbb{R}^n$ with nonempty interior. We call this problem the $m$-extremal energy 
problem for $K$, or simply the extremal energy problem. 

\begin{proposition}\label{var}
The extremal energy problem 
$$\inf \{ \|f\|_{H^m(\mathbb{R}^n)} \, : \, f \in H^m(\mathbb{R}^n), \; f = g \; \rm{ on } \; K\}$$
has a unique solution.
\end{proposition}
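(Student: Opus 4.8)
The plan is to recognise the extremal energy problem as that of computing the distance from the origin to a nonempty closed affine subspace of the Hilbert space $H^m(\mathbb{R}^n)$, and then to run the standard Hilbert-space projection argument. There is no PDE content needed at this stage; everything is soft functional analysis, and the one input beyond the definitions is the fact, recorded in Section \ref{sec_Sob}, that $\tilde{H}^m_0(\mathbb{R}^n \setminus K)$ is a closed subspace of $H^m(\mathbb{R}^n)$.

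First I would identify the feasible set. Put $\mathcal{A} := \{ f \in H^m(\mathbb{R}^n) \, : \, f = g \mbox{ on } K \}$. Since ``$f = g$ on $K$'' means, by the convention of Section \ref{sec_Sob}, that $\|f - g\|_{H^m(\mathrm{int}\, K)} = 0$, and since this quantity depends only on $f - g$ and vanishes precisely when $f - g$ vanishes on $K$ in the same sense, we have $\mathcal{A} = g + \tilde{H}^m_0(\mathbb{R}^n \setminus K)$. Taking $f = g$ shows $g \in \mathcal{A}$, so $\mathcal{A} \neq \emptyset$; and as $\tilde{H}^m_0(\mathbb{R}^n \setminus K)$ is a closed linear subspace, $\mathcal{A}$ is a closed affine subspace, hence a nonempty closed convex subset of $H^m(\mathbb{R}^n)$. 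In particular $I := \inf\{ \|f\|_{H^m(\mathbb{R}^n)} \, : \, f \in \mathcal{A}\}$ satisfies $0 \le I \le \|g\|_{H^m(\mathbb{R}^n)} < \infty$.

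Next I would produce the minimiser by the classical Cauchy-sequence argument. Let $(f_k) \subset \mathcal{A}$ be a minimising sequence, $\|f_k\|_{H^m(\mathbb{R}^n)} \to I$. Convexity of $\mathcal{A}$ gives $\tfrac12(f_j + f_k) \in \mathcal{A}$, hence $\|f_j + f_k\|_{H^m(\mathbb{R}^n)} \ge 2I$, and the parallelogram law yields $\|f_j - f_k\|_{H^m(\mathbb{R}^n)}^2 = 2\|f_j\|_{H^m(\mathbb{R}^n)}^2 + 2\|f_k\|_{H^m(\mathbb{R}^n)}^2 - \|f_j + f_k\|_{H^m(\mathbb{R}^n)}^2 \le 2\|f_j\|_{H^m(\mathbb{R}^n)}^2 + 2\|f_k\|_{H^m(\mathbb{R}^n)}^2 - 4I^2 \to 0$ as $j,k \to \infty$. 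Thus $(f_k)$ is Cauchy; by completeness of $H^m(\mathbb{R}^n)$ it converges to some $f_\ast$, which lies in $\mathcal{A}$ because $\mathcal{A}$ is closed, and satisfies $\|f_\ast\|_{H^m(\mathbb{R}^n)} = I$ by continuity of the norm. For uniqueness, if $f_\ast$ and $f_\ast'$ both attain $I$, the same parallelogram identity applied to the pair $\{f_\ast, f_\ast'\}$ (whose midpoint is again in $\mathcal{A}$) gives $\|f_\ast - f_\ast'\|_{H^m(\mathbb{R}^n)}^2 \le 4I^2 - 4I^2 = 0$. (Alternatively one could argue by weak compactness of bounded sets, weak lower semicontinuity of the norm, and weak closedness of the convex set $\mathcal{A}$, together with strict convexity of the Hilbert norm for uniqueness.)

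There is essentially no serious obstacle: the assertion is exactly the projection of $0$ onto a closed convex set in a Hilbert space. The only points deserving a word of care are (a) that the feasible set is genuinely norm-closed, which is precisely the closedness of $\tilde{H}^m_0(\mathbb{R}^n \setminus K)$ from Section \ref{sec_Sob}, and (b) that the constraint ``$f = g$ on $K$'' is affine in $f$, which is immediate from its definition via $\|\cdot\|_{H^m(\mathrm{int}\, K)}$. The hypothesis that $K$ has nonempty interior is not actually needed for this proposition beyond making the constraint a sensible one; it will become essential only later, in the analysis of the Euler--Lagrange equation \eqref{EL} and of the boundary behaviour of the extremiser near $\partial K$.
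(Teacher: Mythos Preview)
Your proof is correct and takes essentially the same approach as the paper's own proof, which simply observes that this is the elementary Hilbert-space fact that there is a unique closest point to zero in a nonempty closed convex (in fact affine) subset of $H^m(\mathbb{R}^n)$. You have merely spelled out the details---identifying the feasible set as $g + \tilde{H}^m_0(\mathbb{R}^n \setminus K)$ and running the parallelogram-law argument---where the paper leaves them implicit.
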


\begin{proof}
This is just the elementary fact that in the Hilbert space $H^m(\mathbb{R}^n)$, there is a unique 
closest point to zero in the nonempty closed convex (in fact affine) set 
$\{ \|f\|_{H^m(\mathbb{R}^n)} \, : \, f \in H^m(\mathbb{R}^n), \; f = g \; {\rm{ on }} \; K\}$.
\end{proof}

\medskip
\noindent
When $K$ is a Euclidean ball 
we can see immediately that 
the unique solution to the extremal energy problem is radial since averaging any solution over rotations 
yields another solution which is radial.

\medskip
\noindent
By standard arguments (see for example \cite{Meckes14} for the case $m
= (n+1)/2$), the Euler-Lagrange equation for the extremal energy problem is 
$$ (I - \Delta)^m f = 0 \mbox{ on } \mathbb{R}^n \setminus K$$ 
in the weak sense (testing against functions
in $C^\infty_c(\mathbb{R}^n \setminus K)$) and so we are led to study the problem
\begin{eqnarray*}
\begin{aligned}
(I - \Delta)^m f &= 0 \mbox{ on } \mathbb{R}^n \setminus K \mbox{ in the weak sense}\\
f &= g \mbox{ on } K.
\end{aligned}
\end{eqnarray*}
We immediately have existence of solutions in $H^m(\mathbb{R}^n)$ to this problem 
from Proposition \ref{var}, and we also have uniqueness:

\begin{proposition}\label{uniqueness}
Suppose that $g \in H^{m}(\mathbb{R}^n)$. Then there is a unique solution to the problem
\begin{equation}\label{PDE}
\begin{aligned}
(I - \Delta)^m f &= 0 \mbox{ weakly on } \mathbb{R}^n \setminus K \\
f &= g \mbox{ on } K
\end{aligned}
\end{equation}
with $f \in  H^m(\mathbb{R}^n)$.
\end{proposition}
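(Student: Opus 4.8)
The plan is to handle existence and uniqueness separately, deducing both from the variational problem of this section.

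For existence, I would take the unique minimiser $f_0$ of the $m$-extremal energy problem with datum $g$ supplied by Proposition \ref{var} and verify that it solves \eqref{PDE}. The boundary condition $f_0 = g$ on $K$ holds by construction. For the interior equation, observe that if $\phi \in C^\infty_c(\mathbb{R}^n \setminus K)$ then $\phi \equiv 0$ on $K$, so $f_0 + t\phi$ is admissible for every $t \in \mathbb{R}$; hence $t \mapsto \|f_0 + t\phi\|^2_{H^m(\mathbb{R}^n)} = \|f_0\|^2_{H^m(\mathbb{R}^n)} + 2t\langle f_0, \phi\rangle_{H^m(\mathbb{R}^n)} + t^2\|\phi\|^2_{H^m(\mathbb{R}^n)}$ is minimised at $t=0$, forcing $\langle f_0, \phi\rangle_{H^m(\mathbb{R}^n)} = 0$. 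The point is then that $\langle f, \phi\rangle_{H^m(\mathbb{R}^n)} = \int_{\mathbb{R}^n} f\,(I-\Delta)^m\phi$ for every $\phi \in C^\infty_c(\mathbb{R}^n)$; this follows by repeated integration by parts from \eqref{xsd} together with the binomial identity $(I-\Delta)^m = \sum_{j=0}^m \binom{m}{j}(-\Delta)^j$ (or directly from Plancherel and the Fourier-multiplier description of the $H^m(\mathbb{R}^n)$ norm). So $\int_{\mathbb{R}^n} f_0\,(I-\Delta)^m\phi = 0$ for all $\phi \in C^\infty_c(\mathbb{R}^n\setminus K)$, which is precisely the assertion that $(I-\Delta)^m f_0 = 0$ weakly on $\mathbb{R}^n\setminus K$. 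Thus $f_0$ is a solution of \eqref{PDE}.

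For uniqueness I would in fact show more, namely that any solution of \eqref{PDE} must equal the minimiser $f_0$. Let $f \in H^m(\mathbb{R}^n)$ solve \eqref{PDE}, and let $\tilde f \in H^m(\mathbb{R}^n)$ be any admissible competitor, i.e. $\tilde f = g$ on $K$. Then $\tilde f - f$ lies in $H^m(\mathbb{R}^n)$ and vanishes on $K$, so $\tilde f - f \in \tilde{H}^m_0(\mathbb{R}^n\setminus K)$; since $K$ has nonempty interior, Lemma \ref{Sob} produces $\phi_k \in C^\infty_c(\mathbb{R}^n\setminus K)$ with $\|(\tilde f - f) - \phi_k\|_{H^m(\mathbb{R}^n)} \to 0$. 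As $f$ satisfies the weak equation, $\langle f, \phi_k\rangle_{H^m(\mathbb{R}^n)} = \int_{\mathbb{R}^n} f\,(I-\Delta)^m\phi_k = 0$ for each $k$, and passing to the limit gives $\langle f, \tilde f - f\rangle_{H^m(\mathbb{R}^n)} = 0$. Consequently $\|\tilde f\|^2_{H^m(\mathbb{R}^n)} = \|f\|^2_{H^m(\mathbb{R}^n)} + \|\tilde f - f\|^2_{H^m(\mathbb{R}^n)} \geq \|f\|^2_{H^m(\mathbb{R}^n)}$, with equality if and only if $\tilde f = f$. Hence $f$ is the unique minimiser of the extremal energy problem, i.e. $f = f_0$; in particular any two solutions of \eqref{PDE} must coincide.

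I expect the main (modest) obstacle to be the bookkeeping behind the identity $\langle f, \phi\rangle_{H^m(\mathbb{R}^n)} = \int_{\mathbb{R}^n} f\,(I-\Delta)^m\phi$, which converts the weak Euler--Lagrange condition into $H^m(\mathbb{R}^n)$-orthogonality against test functions supported off $K$; the integrations by parts are legitimate because $\phi$ is smooth with compact support. Apart from that, the argument is just the Hilbert-space projection fact underlying Proposition \ref{var} combined with the density statement Lemma \ref{Sob}, whose sole role is to turn ``$h \in H^m(\mathbb{R}^n)$ vanishes on $K$'' into ``$h$ is approximable in $H^m(\mathbb{R}^n)$ by functions in $C^\infty_c(\mathbb{R}^n\setminus K)$'' --- and this is the only place where the nonempty-interior hypothesis on $K$ is used.
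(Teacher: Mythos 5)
Your argument is correct and is essentially the paper's own proof written out in full: the weak equation is equivalent to $H^m(\mathbb{R}^n)$-orthogonality against $C^\infty_c(\mathbb{R}^n\setminus K)$, so the difference of two solutions lies in $\tilde{H}^m_0(\mathbb{R}^n\setminus K)$ and is orthogonal to $H^m_0(\mathbb{R}^n\setminus K)$, and Lemma \ref{Sob} (the only place the nonempty-interior hypothesis enters) forces that orthogonal complement to be trivial. The paper compresses this into the statement that the solution set is $f_0+\mathcal{M}$ with $\mathcal{M}=\{0\}$, and likewise records afterwards your observation that the solution coincides with the minimiser from Proposition \ref{var}.
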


\begin{proof}
The space of solutions to \eqref{PDE} is $f_0 + \mathcal{M}$ where $f_0$ is any particular solution 
and $\mathcal{M}$ is the orthogonal complement of $H^m_0(\mathbb{R}^n \setminus K)$ in 
$\tilde{H}^m_0(\mathbb{R}^n \setminus K)$. But by Lemma \ref{Sob},  $\mathcal{M} = \{0\}$, 
and so the solution is unique.

\end{proof}


\medskip
\noindent
So the unique solution to the extremal energy problem of 
Proposition \ref{var} is also the unique solution to \eqref{PDE} in $H^m(\mathbb{R}^n)$.


\medskip
\noindent
We shall need in the next section to appeal to the theory of elliptic regularity.
Briefly, for an elliptic operator such as $(I - \Delta)^m$, weak solutions $f$ to $(I - \Delta)^m f = 0$ 
on an open set are actually smooth, and the equation $(I - \Delta)^m f = 0$ holds in the classical sense. See
\cite{AF} or \cite{Evans}.

\section{A formula for the $m$-extremal energy}\label{sec_formula}
\noindent
In this section we develop the promised formula for the $m$-extremal energy. See Theorem \ref{main} below.

\medskip
\noindent
We first consider arbitrary solutions $h \in H^m(\mathbb{R}^n)$ of the problem
\begin{equation*}
\left\{   
\begin{array}{llll}
(I - \Delta)^m &h &=& \; 0 \mbox{  weakly on } \mathbb{R}^n \setminus K \vspace{0.25cm}\\
&h &=& \; 1 \mbox{  on  } K
\end{array}     \right.
\end{equation*}
where $m \geq 1$ is an arbitrary integer and $K$ is a compact convex set with nonempty interior.
Consider a regularised distance function $d$ for $K$, which is defined
on $\mathbb{R}^n$ and satisfies  $d(x) \sim {\rm {dist}} (x, K)$. Let $K_r = 
\{ x \in \mathbb{R}^n \; : \; d(x) \leq r\}$ and let $\nu$ denote the unit normal 
pointing {\em out} of $K_r$. (Note that $\partial K_r$ is smooth and so Lemma \ref{trace} 
will be applicable.)

\medskip
\noindent
For $g \in H^m(\mathbb{R}^n)$ with compact support we have
\begin{eqnarray}\label{pp}
\begin{aligned}
 \langle g,h \rangle_{H^m(\mathbb{R}^n)} 
&= \sum_{j=0}^m {m \choose j} \int_{\mathbb{R}^n} D^j g \cdot D^jh \\
&= \int_K g + \sum_{j=0}^m {m \choose j} \lim_{r \downarrow 0} \int_{\{d(x) \geq r\}} D^j g \cdot D^jh
\end{aligned}
\end{eqnarray}
by the dominated convergence theorem.

\medskip
\noindent
We shall study the terms $\int_{\{d(x) \geq r\}} D^j g \cdot D^jh$ by integrating by parts.
In so doing, we shall systematically use Green's formulae
\begin{eqnarray*}
\begin{aligned}
&\int_\Omega \nabla \phi \cdot \nabla \psi 
&=& \; \; - \int_\Omega \phi \Delta \psi 
- \int_{\partial \Omega} \phi \frac{\partial \psi}{\partial \nu}\, {\rm d} S\\
\mbox{ and } 
\; \; &\int_\Omega (\Delta \phi) \psi 
&=& \; \; - \int_\Omega \nabla \phi \cdot \nabla \psi 
- \int_{\partial \Omega} \frac{\partial \phi }{\partial \nu} \psi \, {\rm d} S
\end{aligned}
\end{eqnarray*}
(where we are trying to increase the differentiability of $\psi$, which will be a function of $h$, 
with the eventual aim of using the equation satisfied by $h$,
and decrease that of $\phi$, which will be a function of $g$). In 
these formulae the region $\Omega$ will be $\{d(x) \geq r\}$ and $\nu$ is the unit 
normal pointing out of $K_r$. The smoothness of $h$ on $\mathbb{R}^n \setminus K$ coming from elliptic 
regularity, and the compact support of $g$ will ensure that each use of these formulae 
is valid. In particular the compact support of $g$ means that there are no boundary 
terms at infinity to consider.

\medskip
\noindent
To carry out these calculations it is convenient 
to define $\mathcal{D}^j f := \Delta^{j/2} f$ for $j$ even and 
$\mathcal{D}^j f := \frac{\partial}{\partial \nu}\Delta^{(j-1)/2} f$ for $j$ odd.
(Note the distinction here when $j$ is odd between $\mathcal{D}$ and
$D$. While a risking a possible typographical confusion, at a first
reading one should simply think of $\mathcal{D}^j$ and $D^j$ as denoting
suitable derivatives of order $j$. Note also that 
these boundary operators occur in the PDE literature in problems involving hybrid Dirichlet--Navier
boundary conditions. See for example \cite{Navier}, p.33. It is in the
lemma below that expressions such as 
$\Delta u$ (rather than  $\frac{\partial^2 u}{\partial \nu^2}$) appear more naturally on the boundary.

\begin{lemma}\label{ibp} Let $K \subseteq \mathbb{R}^n$ be compact and convex.
For $1 \leq j  \leq m$ and $g \in H^m(\mathbb{R}^n)$ with compact support we have 
$$ \int_{\{d(x) \geq r\}} D^j g \cdot D^jh = (-1)^j\left(\int_{\{d(x) \geq r\}} g \Delta^j h + 
\sum_{k=0}^{j-1} (-1)^k \int_{\partial K_r} \mathcal{D}^k g  \mathcal{D}^{2j-k-1} h \, {\rm d} S\right).$$
\end{lemma}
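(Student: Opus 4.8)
The plan is to prove the identity by induction on $j$, integrating by parts one derivative at a time and carefully tracking the boundary terms that accumulate. Recall that $D^j$ alternates between $\Delta^{j/2}$ (for $j$ even) and $\nabla \Delta^{(j-1)/2}$ (for $j$ odd), so the integrand $D^j g \cdot D^j h$ is a genuine dot product of vector fields when $j$ is odd and a product of scalars when $j$ is even. In either case one Green's formula moves a derivative from the $g$-factor onto the $h$-factor. Because $h$ is smooth on $\mathbb{R}^n\setminus K\supseteq\{d(x)\geq r\}$ by elliptic regularity, and $g$ has compact support, every application of Green's formulae on $\Omega=\{d(x)\geq r\}$ is legitimate and there are no contributions from infinity; the only boundary is $\partial K_r$, which is smooth, with outward (from $K_r$) unit normal $\nu$.

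First I would handle the base case $j=1$ directly: if $j=1$, then $D^1 g\cdot D^1 h=\nabla g\cdot\nabla h$, and the first Green's formula gives
$$\int_{\{d(x)\geq r\}}\nabla g\cdot\nabla h = -\int_{\{d(x)\geq r\}} g\,\Delta h - \int_{\partial K_r} g\,\frac{\partial h}{\partial\nu}\,{\rm d}S,$$
which is exactly the claimed formula with $j=1$ (the sum over $k$ has the single term $k=0$, $\mathcal{D}^0 g=g$, $\mathcal{D}^1 h=\partial h/\partial\nu$, and the overall sign is $(-1)^1$). For the inductive step, I would treat the cases $j$ even and $j$ odd in parallel. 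When $j$ is even, write $D^j g\cdot D^j h=(\Delta^{j/2}g)(\Delta^{j/2}h)$ and apply the second Green's formula to peel $\Delta$ off $\Delta^{j/2-1}(\Delta^{1/2}\cdots)$ — more precisely, integrate by parts twice (once with each Green's formula) to convert $\int (\Delta^{j/2}g)(\Delta^{j/2}h)$ into $-\int D^{j-1}g\cdot D^{j-1}(\Delta h)$ plus two boundary integrals over $\partial K_r$, one involving $\mathcal{D}^{j-1}g$ and $\mathcal{D}^{j}h$ and one involving $\mathcal{D}^{j-2}g$ and $\mathcal{D}^{j+1}h$. When $j$ is odd one does the analogous two-step integration by parts starting from $\nabla\Delta^{(j-1)/2}g\cdot\nabla\Delta^{(j-1)/2}h$. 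In both cases one is left with a lower-order bulk term $\pm\int D^{j-1}g\cdot D^{j-1}(\Delta h)$ to which the inductive hypothesis (applied with $h$ replaced by $\Delta h$, which is again smooth on $\{d(x)\geq r\}$, and with $j$ replaced by $j-1$) applies, producing $\int g\,\Delta^{j}h$ and a sum of boundary terms $\int_{\partial K_r}\mathcal{D}^k g\,\mathcal{D}^{2(j-1)-k-1}(\Delta h)\,{\rm d}S=\int_{\partial K_r}\mathcal{D}^k g\,\mathcal{D}^{2j-k-1}h\,{\rm d}S$ for $0\leq k\leq j-2$. Combining these with the two fresh boundary terms (which supply the $k=j-1$ and $k=j-2$ contributions — with the latter's sign matching what the inductive sum produces, so that no double counting or cancellation error occurs) and bookkeeping the powers of $-1$ yields the stated formula for index $j$.

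The main obstacle I anticipate is the sign and index bookkeeping: one must verify that the two boundary terms generated at each stage of the induction slot correctly into the telescoping sum $\sum_{k=0}^{j-1}(-1)^k\int_{\partial K_r}\mathcal{D}^k g\,\mathcal{D}^{2j-k-1}h\,{\rm d}S$ — in particular that the boundary term from the inductive hypothesis carrying index $k=j-2$ combines consistently with the freshly produced $\mathcal{D}^{j-2}g\,\mathcal{D}^{j+1}h$ term rather than conflicting with it, and that the global factor $(-1)^j$ emerges correctly from the two sign changes at each step composed with the inductive $(-1)^{j-1}$. A clean way to manage this is to do the two integrations-by-parts symmetrically (moving the "half-order" $\nabla$ factors when $j$ is odd, or a full $\Delta$ when $j$ is even) and to note that the two boundary terms produced are precisely the $k=j-1$ term (with $\mathcal{D}^{j-1}g$, $\mathcal{D}^{j}h$) and a term that, after applying the inductive formula to the bulk remainder and re-indexing, merges with the $k=j-2$ contribution; checking this merger is the one place where care is genuinely required. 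Everything else — validity of Green's formulae, smoothness of $\Delta^\ell h$ on $\{d(x)\geq r\}$, absence of terms at infinity — is routine given elliptic regularity and the compact support of $g$.
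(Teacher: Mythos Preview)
Your inductive strategy is correct and is essentially the same computation the paper performs; the paper simply unrolls the induction, writing out the cases $j=1,2,3$ explicitly via repeated integration by parts and then stating the general pattern.

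One point in your description is more complicated than it needs to be and slightly muddled. Only \emph{one} integration by parts is required at each inductive step, not two. For either parity of $j$, a single application of the appropriate Green identity gives
\[
\int_{\{d\geq r\}} D^j g\cdot D^j h \;=\; -\int_{\{d\geq r\}} D^{j-1}g\cdot D^{j-1}(\Delta h)\;-\;\int_{\partial K_r}\mathcal{D}^{j-1}g\,\mathcal{D}^j h\,{\rm d}S,
\]
using that $D^{j+1}h=D^{j-1}(\Delta h)$. This produces exactly \emph{one} fresh boundary term, namely the $k=j-1$ term. Applying the inductive hypothesis to the bulk integral (with $h$ replaced by $\Delta h$, noting $\mathcal{D}^{\ell}(\Delta h)=\mathcal{D}^{\ell+2}h$) supplies the remaining boundary terms $k=0,\dots,j-2$ and the volume term $\int g\,\Delta^j h$. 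There is no second fresh boundary term at $k=j-2$, hence no merger or double-counting issue to resolve; that worry in your proposal simply does not arise. The sign check is then immediate: the prefactor $-(-1)^{j-1}=(-1)^j$ handles the inductive contribution, and the fresh $k=j-1$ term enters as $-1=(-1)^j(-1)^{j-1}$, exactly as required.
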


\begin{proof}
Let us first consider the case $j=1$. Then
\begin{eqnarray*}
\begin{aligned}
\int_{\{d(x) \geq r\}} \nabla g \cdot \nabla h &= - \int_{\{d(x) \geq r\}} g \Delta h 
- \int_{\partial K_r} g \frac{\partial h}{\partial \nu}\, {\rm d} S \\ 
&= -\left(\int_{\{d(x) \geq r\}} g \Delta h 
+ \int_{\partial K_r} g \mathcal{D} h\, {\rm d} S \right)
\end{aligned}
\end{eqnarray*}
as required.

\medskip
\noindent
Now, for $j=2$ we have
\begin{eqnarray*}
\begin{aligned}
\int_{\{d(x) \geq r\}} \Delta g \Delta h &= - \int_{\{d(x) \geq r\}} \nabla g \cdot \nabla 
\Delta h - \int_{\partial K_r}  \frac{\partial g}{\partial \nu} \Delta h  {\rm d} S \\
&=  \int_{\{d(x) \geq r\}} g \Delta^2 h  + \int_{\partial K_r} g  \frac{\partial \Delta h}
{\partial \nu} \, {\rm d} S -  \int_{\partial K_r}  \frac{\partial g}{\partial \nu} \Delta h  {\rm d} S\\
&=  \int_{\{d(x) \geq r\}} g \Delta^2 h + \int_{\partial K_r} g  \mathcal{D}^3 h \, {\rm d} S 
-  \int_{\partial K_r}  \mathcal{D} g  \mathcal{D}^2 h {\rm d} S 
\end{aligned}
\end{eqnarray*}
as required, noting that the term $\frac{\partial g}{\partial \nu}$ makes sense as a member of $H^{m-3/2}$ 
of the hypersurface $\partial K_r$ by the trace inequality, Lemma \ref{trace}. 
(For this case to occur we must have $m \geq 2$.)

\medskip
\noindent
Similarly, for $j=3$, we have
\begin{eqnarray*}
\begin{aligned}
&\int_{\{d(x) \geq r\}} \nabla \Delta g \cdot \nabla \Delta h \\ 
&=  - \int_{\{d(x) \geq r\}} \Delta g \Delta^2 h - \int_{\partial K_r} \Delta g
\frac{\partial \Delta h}
{\partial \nu} \, {\rm d} S \\
&= \int_{\{d(x) \geq r\}}  \nabla g \cdot \nabla \Delta^2 h 
+ \int_{\partial K_r} \frac{\partial g}{\partial \nu}\Delta^2 h  \, {\rm d} S
- \int_{\partial K_r} \Delta g \frac{\partial \Delta h}{\partial \nu} \, {\rm d} S \\
&= -  \int_{\{d(x) \geq r\}} g \Delta^3 h 
- \int_{\partial K_r} g \frac{\partial \Delta^2 h} {\partial \nu} \, {\rm d} S 
+ \int_{\partial K_r} \frac{\partial g}{\partial \nu}\Delta^2 h  \, {\rm d} S
- \int_{\partial K_r} \Delta g \frac{\partial \Delta h} {\partial \nu} \, {\rm d} S\\
&= - \left( \int_{\{d(x) \geq r\}} g \Delta^3 h 
+ \int_{\partial K_r} g \mathcal{D}^5 h \, {\rm d} S 
- \int_{\partial K_r} \mathcal{D} g\mathcal{D}^4 h  \, {\rm d} S
+ \int_{\partial K_r} \mathcal{D}^2g \mathcal{D}^3 h  \, {\rm d} S \right)
\end{aligned}
\end{eqnarray*}
noting that the terms $\frac{\partial g}{\partial \nu}$ and $\Delta g$ make sense as members
of $H^{m-3/2}$ and  $H^{m-5/2}$ respectively of the hypersurface $\partial K_r$ by the trace inequality, 
Lemma \ref{trace}. (For this case to occur we must have $m \geq 3$.)

\medskip
\noindent
Continuing, we see that the general case takes the form 
$$\int_{\{d(x) \geq r\}} D^j g \cdot D^j h$$
\begin{eqnarray*}
\begin{aligned}
= (-1)^j \Big\{\int_{\{d(x) \geq r\}} g \Delta^j h 
&+ \int_{\partial K_r} g \frac{\partial \Delta^{j-1} h} {\partial \nu} \, {\rm d} S
- \int_{\partial K_r} \frac{\partial g}{\partial \nu}\Delta^{j-1} h  \, {\rm d} S \\
&+ \int_{\partial K_r} \Delta g \frac{\partial \Delta^{j-2} h} {\partial \nu} \, {\rm d} S 
- \int_{\partial K_r} \frac{\partial \Delta g}{\partial \nu}\Delta^{j-2} h  \, {\rm d} S
+ \dots \Big\}
\end{aligned}
\end{eqnarray*}
where the last term inside the curly brackets is
$$  - \int_{\partial K_r} \frac{\partial \Delta^{(j-2)/2} g}{\partial \nu}\Delta^{j/2} h  \, {\rm d} S $$
when $j$ is even and
$$ + \int_{\partial K_r} \Delta^{(j-1)/2} g \frac{\partial \Delta^{(j-1)/2} h} {\partial \nu} \, {\rm d} S $$
when $j$ is odd. That is,
$$ \int_{\{d(x) \geq r\}} D^j h \cdot D^jg$$
\begin{eqnarray*}
\begin{aligned}
 =(-1)^j\Big\{\int_{\{d(x) \geq r\}} g \Delta^j h 
&+ \int_{\partial K_r} g \mathcal{D}^{(2j-1)} h\, {\rm d} S
- \int_{\partial K_r} \mathcal{D}g \mathcal{D}^{(2j-2)} h  \, {\rm d} S\\
&+ \int_{\partial K_r} \mathcal{D}^2 g \mathcal{D} ^{(2j-3)} h \, {\rm d} S 
- \int_{\partial K_r} \mathcal{D}^3 g \mathcal{D} ^{(2j-4)} h \, {\rm d} S \\
&+ \dots + (-1)^{j-1} \int_{\partial K_r} \mathcal{D}^{(j-1)} g \mathcal{D} ^{j} h \, {\rm d} S \Big\}.
\end{aligned}
\end{eqnarray*}
Note that the terms $\mathcal{D}g, \mathcal{D}^2 g, \dots , \mathcal{D}^{(j-1)} g$ 
make sense as members of $H^{m-3/2}$, $H^{m-5/2}$, $\dots, H^{m-j+1/2}$ respectively of the hypersurface 
$\partial K_r$ by the trace inequality, Lemma \ref{trace}. (For this case to occur we must have $m \geq j$.) This 
establishes the lemma.
\end{proof}

\noindent
If we take the identities of Lemma \ref{ibp}, multiply by ${m \choose j}$ and sum from $j=1$ to $m$ 
we get
\begin{eqnarray*}
\begin{aligned}
&\sum_{j=1}^m {m \choose j}  \int_{\{d(x) \geq r\}} D^j g \cdot D^jh \\
= &\sum_{j=1}^m {m \choose j}  (-1)^j\left(\int_{\{d(x) \geq r\}} g \Delta^j h + 
\sum_{k=0}^{j-1} (-1)^k \int_{\partial K_r} \mathcal{D}^k g  \mathcal{D}^{2j-k-1} h \, {\rm d} S\right)\\
= &\int_{\{d(x) \geq r\}} g \left(\sum_{j=1}^m {m \choose j}  (-1)^j  \Delta^j h\right) 
+ \sum_{j=1}^m {m \choose j}  (-1)^j\sum_{k=0}^{j-1} (-1)^k \int_{\partial K_r} \mathcal{D}^k g  
\mathcal{D}^{2j-k-1} h \, {\rm d} S.
\end{aligned}
\end{eqnarray*}

\medskip
\noindent
Now add $ \int_{\{d(x) \geq r\}} gh$ (corresponding to the the term $j=0$) to both sides to obtain
\begin{eqnarray*}
\begin{aligned}
&\sum_{j=0}^m {m \choose j}  \int_{\{d(x) \geq r\}} D^j h \cdot D^jg\\
= &\int_{\{d(x) \geq r\}} g \left(\sum_{j=0}^m {m \choose j}  (-1)^j  \Delta^j h\right) 
+ \sum_{0 \leq k < j \leq m}(-1)^{j+k}{m \choose j} \int_{\partial K_r} \mathcal{D}^k g  
\mathcal{D}^{2j-k-1} h \, {\rm d} S\\
= &\int_{\{d(x) \geq r\}} g (I - \Delta)^m h + \sum_{0 \leq k < j \leq m}(-1)^{j+k}{m \choose j}\int_{\partial K_r} \mathcal{D}^k g  \mathcal{D}^{2j-k-1} h \, {\rm d} S\\
= &\sum_{0 \leq k < j \leq m}(-1)^{j+k}{m \choose j}\int_{\partial K_r} \mathcal{D}^k g  
\mathcal{D}^{2j-k-1} h \, {\rm d} S
\end{aligned}
\end{eqnarray*}
since $h$ satisfies the equation  $(I - \Delta)^m h =0$ in the classical sense on ${\{d(x) >0 \}}$ by 
elliptic regularity.

\medskip
\noindent
Combining this with \eqref{pp}, we obtain the representation
$$ \langle g,h \rangle_{H^m(\mathbb{R}^n)} 
= \int_K g
+ \lim_{r \downarrow 0}\sum_{0 \leq k < j \leq m}(-1)^{j+k}{m \choose j}\int_{\partial K_r} \mathcal{D}^k g  
\mathcal{D}^{2j-k-1} h \, {\rm d} S$$
valid for any $g \in H^m(\mathbb{R}^n)$ with compact support. 

\medskip
\noindent
We would like to extend this formula to be valid for {\em all}
$g \in  H^m(\mathbb{R}^n)$. Observe that all the terms on the right hand side make sense 
for such $g$ -- as was noted during the proof of Lemma \ref{ibp}. In particular there is no problem
with the existence of the limit as $r \downarrow 0$ for such $g$ -- simply multiply it by a smooth 
cut-off which is identically $1$ on $\{ d(x) \leq 2\}$ to obtain $\tilde{g}$ of compact support for which 
the corresponding limit exists and the value of which is left unaltered by the multiplication.  

\medskip
\noindent
So the linear functional $\Lambda$ given by 
$$ \Lambda g :=  \langle g,h \rangle_{H^m(\mathbb{R}^n)} - \int_K g
- \lim_{r \downarrow 0}\left(\sum_{0 \leq k < j \leq m}(-1)^{j+k} {m \choose j}\int_{\{d(x) = r\}} \mathcal{D}^k g  
\mathcal{D}^{2j-k-1} h \, {\rm d} S\right)$$
is well-defined on $H^m(\mathbb{R}^n)$ and is identically zero on the dense subspace consisting of functions 
of compact support. If $g$ is in $H^m(\mathbb{R}^n)$ and $\epsilon > 0$ we may choose by Lemma \ref{approx} 
a cut-off function $\psi$ which is identically $1$ on $\{d(x) \leq 2\}$ and such that if 
$\tilde g = \psi g$, then
$\|g - \tilde g \|_{H^m(\mathbb{R}^n)} < \epsilon$.
Hence 
$$ \Lambda g = \Lambda g - \Lambda \tilde g  = \Lambda( g - \tilde g) 
=  \langle g-\tilde g, h\rangle_{H^m(\mathbb{R}^n)} $$
so that 
$$|\Lambda g| \leq \| g - \tilde g\|_{H^m(\mathbb{R}^n)}\|h\|_{H^m(\mathbb{R}^n)} < \epsilon  \|h\|_{H^m(\mathbb{R}^n)}.$$
Thus $\Lambda g =0$ for all $g \in H^m(\mathbb{R}^n)$ and we have:

\begin{proposition}\label{rep}
For $m \in \mathbb{N}$, $K$ any compact convex set in $\mathbb{R}^n$ and any solution $h \in H^m(\mathbb{R}^n)$
to the problem $(I-\Delta)^m h = 0$ weakly on $\mathbb{R}^n \setminus K$, we have the 
formula
\begin{equation}\label{ytr}
 \langle g,h \rangle_{H^m(\mathbb{R}^n)} 
= \int_K g
+ \lim_{r \downarrow 0}\sum_{0 \leq k < j \leq m}(-1)^{j+k}{m \choose j}\int_{\partial K_r} \mathcal{D}^k g  
\mathcal{D}^{2j-k-1} h \, {\rm d} S,
\end{equation}
valid for every $g \in H^m(\mathbb{R}^n)$.
\end{proposition}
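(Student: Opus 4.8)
The plan is to prove \eqref{ytr} first for $g \in H^m(\mathbb{R}^n)$ of compact support and then to remove the compact-support hypothesis by density. For compactly supported $g$, I would begin by splitting the sum defining $\langle g,h\rangle_{H^m(\mathbb{R}^n)}$ into its part over $K$ and its part over $\mathbb{R}^n\setminus K = \{d(x)>0\}$, writing the latter as $\lim_{r\downarrow 0}\int_{\{d(x)\geq r\}}$ (legitimate by dominated convergence, each $D^j g$, $D^j h$ lying in $L^2$) and using the standing normalisation $h\equiv 1$ on $K$ so that the part over $K$ collapses to $\int_K g$; this is \eqref{pp}. One then integrates by parts on each region $\{d(x)\geq r\}$ using the two Green's formulae, moving derivatives off the $g$-factor onto the $h$-factor until reaching $\Delta^j h$ --- the content of Lemma \ref{ibp}. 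Here the trace inequality of Lemma \ref{trace} makes every boundary integral meaningful, elliptic regularity makes $h$ smooth on $\{d(x)>0\}$ so Green's formulae apply, and the compact support of $g$ removes any boundary contribution at infinity.

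The next step is to multiply the identities of Lemma \ref{ibp} by ${m\choose j}$, sum over $j = 1,\dots,m$, and add the $j=0$ term $\int_{\{d(x)\geq r\}} gh$. By the binomial theorem the interior integrals assemble into $\int_{\{d(x)\geq r\}} g\,(I-\Delta)^m h$, which vanishes since $h$ solves $(I-\Delta)^m h = 0$ classically on $\{d(x)>0\}$; the boundary integrals reorganise into the double sum over $0\leq k<j\leq m$ of \eqref{ytr}. Combined with \eqref{pp}, this gives \eqref{ytr} for compactly supported $g$.

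To reach arbitrary $g\in H^m(\mathbb{R}^n)$, I would note that every term on the right of \eqref{ytr} still makes sense for such $g$: the traces $\mathcal{D}^k g|_{\partial K_r}$ lie in suitable Sobolev spaces of $\partial K_r$ by Lemma \ref{trace}, and replacing $g$ by $\psi g$ with $\psi\equiv 1$ on $\{d(x)\leq 2\}$ (Lemma \ref{approx}) alters neither $\int_K g$ nor the boundary integrals for small $r$ while giving a compactly supported function, for which the limit $r\downarrow 0$ exists by the first part. Thus the functional $\Lambda$ obtained by subtracting the right-hand side of \eqref{ytr} from $g\mapsto\langle g,h\rangle_{H^m(\mathbb{R}^n)}$ is defined on all of $H^m(\mathbb{R}^n)$ and vanishes on the dense subspace of compactly supported functions. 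Given $g$ and $\epsilon>0$, choosing $\tilde g = \psi g$ as above with $\|g-\tilde g\|_{H^m(\mathbb{R}^n)}<\epsilon$, and noting that $g-\tilde g$ vanishes near $K$ so that $\Lambda(g-\tilde g) = \langle g-\tilde g,h\rangle_{H^m(\mathbb{R}^n)}$, one gets $|\Lambda g| = |\Lambda(g-\tilde g)| \leq \epsilon\|h\|_{H^m(\mathbb{R}^n)}$, hence $\Lambda g = 0$, which is \eqref{ytr}.

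The one substantive computation is Lemma \ref{ibp} --- correctly tracking the alternating signs and the switch from $D$ to $\mathcal{D}$ on the boundary through the $j$ iterations of Green's formulae --- but that is already in hand; the only remaining subtlety for the Proposition is checking that the boundary terms and the limit $r\downarrow 0$ persist for general $g$, which the trace inequality and the cut-off argument settle.
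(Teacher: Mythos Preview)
Your proposal is correct and follows essentially the same route as the paper: establish \eqref{ytr} first for compactly supported $g$ via \eqref{pp} and Lemma \ref{ibp}, assemble the interior terms into $\int g\,(I-\Delta)^m h$ by the binomial theorem, and then extend to general $g$ by the same cut-off/density argument with the functional $\Lambda$. Your explicit mention of the standing normalisation $h\equiv 1$ on $K$ (needed for the $K$-part of \eqref{pp} to collapse to $\int_K g$, though not restated in the Proposition itself) is apt and matches the paper's setup at the start of Section~\ref{sec_formula}.
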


\medskip
\noindent
It is intuitively reasonable that for functions $g \in H^m_0(\mathbb{R}^n \setminus K)$ there should be no 
boundary terms in formula \eqref{ytr} and indeed for such $g$ it is natural that $\int_K g = 0$
and also that $\langle g, h \rangle_{H^m} = 0$ using the equation. This we establish next. (Note that in the 
following lemma we had better use the equation satisfied by $h$ as otherwise there is 
no reason to believe the terms involving higher derivatives of $h$ exist.)

\begin{lemma}\label{limbound}
For $g \in H^m_0(\mathbb{R}^n \setminus K)$ and $h \in H^m(\mathbb{R}^n)$ 
any solution to  $(I-\Delta)^m h = 0$ weakly on $\mathbb{R}^n \setminus K$ we have
$$ \lim_{r \downarrow 0}\sum_{0 \leq k < j \leq m}(-1)^{j+k}{m \choose j} \int_{\partial K_r} \mathcal{D}^k g  
\mathcal{D}^{2j-k-1} h \, {\rm d} S = 0.$$
On the other hand, if $g \in H^m(\mathbb{R}^n)$, if $g = 0$ on $K$ (i.e. $g = 0$ pointwise on $K$ 
if $m > n/2$, and is zero on int $K \neq \emptyset$ when $m \leq n/2$) and if
$$ \lim_{r \downarrow 0}\sum_{0 \leq k < j \leq m}(-1)^{j+k}{m \choose j} \int_{\partial K_r} \mathcal{D}^k g  
\mathcal{D}^{2j-k-1} h \, {\rm d} S = 0$$
for every solution $h \in H^m(\mathbb{R}^n)$ to  $(I-\Delta)^m h = 0$ weakly on $\mathbb{R}^n \setminus K$
which also satisfies $h=0$ on $K$,
then $g \in H^m_0(\mathbb{R}^n \setminus K)$.
\end{lemma}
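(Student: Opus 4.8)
The plan is to prove the two halves separately, using Proposition \ref{rep} as the main engine in both directions. For the first (``easy'') half, I would take $g \in H^m_0(\mathbb{R}^n \setminus K)$ and apply the representation formula \eqref{ytr} of Proposition \ref{rep}, which holds for all $g \in H^m(\mathbb{R}^n)$ and in particular for this $g$. The left-hand side is $\langle g, h\rangle_{H^m(\mathbb{R}^n)}$. First I would show this inner product vanishes: approximate $g$ in $H^m(\mathbb{R}^n)$ by $\phi \in C_c^\infty(\mathbb{R}^n \setminus K)$ (by definition of $H^m_0$), and for such $\phi$ the weak form of the equation $(I-\Delta)^m h = 0$ on $\mathbb{R}^n \setminus K$ says exactly $\langle \phi, h\rangle_{H^m(\mathbb{R}^n)} = \sum_{j=0}^m \binom{m}{j}\int D^j\phi \cdot D^j h = \int \phi (I-\Delta)^m h = 0$; pass to the limit using Cauchy--Schwarz. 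Next, $\int_K g = 0$ because $g = 0$ on $K$ (pointwise if $m > n/2$, a.e.\ on $\mathrm{int}\,K$ otherwise, and $\partial K$ is null). Substituting both facts into \eqref{ytr} leaves precisely the assertion that the limit of boundary terms is zero.

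For the second (``hard'') half I would argue by contraposition combined with the structure already established in the proof of Proposition \ref{uniqueness}. Suppose $g \in H^m(\mathbb{R}^n)$ with $g = 0$ on $K$, so $g \in \tilde H^m_0(\mathbb{R}^n \setminus K)$; by Lemma \ref{Sob} we in fact have $\tilde H^m_0 = H^m_0$, so strictly speaking the conclusion is automatic — but the point of the lemma is presumably to give a self-contained criterion, so I would instead run the following direct orthogonality argument. Decompose $g = g_0 + g_1$ with $g_0 \in H^m_0(\mathbb{R}^n\setminus K)$ the orthogonal projection and $g_1$ in the orthogonal complement $\mathcal{M}$ of $H^m_0(\mathbb{R}^n\setminus K)$ inside $\tilde H^m_0(\mathbb{R}^n\setminus K)$. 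I want to show $g_1 = 0$. Note $g_1 \in \tilde H^m_0$, i.e.\ $g_1$ is itself a function in $H^m(\mathbb{R}^n)$ vanishing on $K$, and being orthogonal to $H^m_0(\mathbb{R}^n\setminus K) \supseteq C_c^\infty(\mathbb{R}^n\setminus K)$ it satisfies $(I-\Delta)^m g_1 = 0$ weakly on $\mathbb{R}^n\setminus K$; thus $g_1$ is an admissible choice of $h$ in the hypothesis. Apply \eqref{ytr} with this $g$ and with $h = g_1$: the left side is $\langle g, g_1\rangle_{H^m} = \langle g_0 + g_1, g_1\rangle_{H^m} = \|g_1\|_{H^m}^2$; the term $\int_K g = 0$; and the boundary-term limit is zero by hypothesis (applied to $h = g_1$). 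Hence $\|g_1\|_{H^m}^2 = 0$, so $g = g_0 \in H^m_0(\mathbb{R}^n\setminus K)$, as desired.

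The main obstacle I anticipate is justifying that $g_1$ genuinely qualifies as one of the functions $h$ to which the hypothesis applies — specifically, verifying both that $g_1$ solves $(I-\Delta)^m h = 0$ weakly on $\mathbb{R}^n\setminus K$ and that $g_1 = 0$ on $K$. The second is immediate since $\tilde H^m_0(\mathbb{R}^n\setminus K)$ consists precisely of $H^m(\mathbb{R}^n)$ functions vanishing on $K$ and $\mathcal{M}\subseteq \tilde H^m_0$. For the first, orthogonality of $g_1$ to $C_c^\infty(\mathbb{R}^n\setminus K)$ in the $\langle\cdot,\cdot\rangle_{H^m}$ inner product is exactly the weak equation $\sum_{j=0}^m\binom{m}{j}\int D^j\phi\cdot D^j g_1 = 0$ for all test $\phi$, which after integration by parts against $\phi$ reads $\int \phi\,(I-\Delta)^m g_1 = 0$; so the weak equation holds. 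A secondary technical point is the measure-theoretic triviality that $\partial K$ (and the level sets $\partial K_r$ in the limit) carry no Lebesgue mass, so that ``$g=0$ on $K$'' really does give $\int_K g = 0$; this is routine given that $K$ is convex with nonempty interior. Everything else is bookkeeping with the already-proved Propositions \ref{rep} and \ref{uniqueness}.
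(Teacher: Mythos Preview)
Your proof is correct and matches the paper's approach closely: both halves invoke Proposition \ref{rep} to reduce the boundary-term limit to $\langle g, h\rangle_{H^m} - \int_K g$, handle the first half by approximating $g$ with test functions in $C_c^\infty(\mathbb{R}^n\setminus K)$, and treat the second by orthogonality within $\tilde H^m_0$ (the paper argues abstractly that $g \perp \mathcal{M}$ hence $g \in H^m_0$, whereas you make the concrete choice $h = g_1$, which is the same content). Your parenthetical observation that Lemma \ref{Sob} already gives $\tilde H^m_0 = H^m_0$ --- so the second assertion is automatic regardless of the limit hypothesis --- is correct and worth noting; the paper's argument, like yours, does not actually need $\mathcal{M} = \{0\}$ but would go through with a nontrivial $\mathcal{M}$ as well.
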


\begin{proof}
Consider the first assertion. By our formula \eqref{ytr} we have 
$$ \lim_{r \downarrow 0}\sum_{0 \leq k < j \leq m}(-1)^{j+k}{m \choose j}\int_{\partial K_r} \mathcal{D}^k g  
\mathcal{D}^{2j-k-1} h \, {\rm d} S =  \langle g,h \rangle_{H^m(\mathbb{R}^n)} - \int_K g .$$
So it is enough to show that 
$$ \Lambda g := \langle g,h \rangle_{H^m(\mathbb{R}^n)} - \int_K g$$
satisfies 
$$ \Lambda g = 0$$
for all $g \in H^m_0(\mathbb{R}^n \setminus K)$.
Let $\epsilon > 0$ and $\tilde g \in C^\infty_c(\mathbb{R}^n \setminus K)$ be such that 
$\| g - \tilde g \|_{H^m_0(\mathbb{R}^n \setminus K)} < \epsilon$.
Clearly we have $\Lambda \tilde g = 0$ since $h$ satisfies $(I-\Delta)^m h =0$ weakly on $\mathbb{R}^n \setminus K$ and 
$K \cap \mathbb{R}^n \setminus K = \emptyset$, and so
$$ \Lambda g  = \Lambda g - \Lambda \tilde g = \Lambda( g - \tilde g) 
=  \langle g-\tilde g,h \rangle_{H^m(\mathbb{R}^n)} - \int_K (g - \tilde g)h,$$
thus 
$$|\Lambda g | \leq \| g - \tilde g\|_{H^m(\mathbb{R}^n)} \|h\|_{H^m(\mathbb{R}^n)} + \|g - \tilde g\|_2 \, \|h\|_2 < 
\epsilon \|h\|_{H^m(\mathbb{R}^n)}.$$
Hence $\Lambda g =0$ for all $g \in H^m_0(\mathbb{R}^n \setminus K)$.

\medskip
\noindent
For the second assertion, if the limit in question is zero, we have by the representation formula \eqref{ytr} that 
$\langle g,h \rangle_{H^m} = 0$ for all $h \in H^m(\mathbb{R}^n)$ which are solutions to  $(I-\Delta)^m h = 0$ weakly 
on $\mathbb{R}^n \setminus K$ which also satisfy $h=0$ on $K$. So 
$g \in \tilde{H}^m_0(\mathbb{R}^n \setminus K)$ and $g$ is perpendicular to the space $\mathcal{M}$ 
introduced above in Proposition \ref{uniqueness}. By definition of $\mathcal{M}$ this means that 
$g \in H^m_0(\mathbb{R}^n \setminus K)$.

\end{proof}


\medskip
\noindent
We have not yet used the requirement that $K$ have nonempty interior nor used any boundary conditions 
satisfied by $h$ (except to obtain the formula \eqref{pp}). In order to deduce a useful formula for 
$\|h\|^2_{H^m(\mathbb{R}^n)}$, we shall have to do so,
and we shall use Lemma \ref{boundary} freely from now on. 

\medskip
\noindent
So, let $H$ be any function in $H^m(\mathbb{R}^n)$ such that $H=1$ on $K$. Then, by 
Lemma \ref{boundary} we have that  $g(x) = (H(x) - 1) \psi(x)$ belongs to $H^m_0(\mathbb{R}^n \setminus K)$
(where $\psi$ is any smooth function of compact support which is identically $1$ on $\{d(x) \leq 2\}$), 
and so we can apply Lemma \ref{limbound} to obtain
\begin{equation*}
\begin{aligned}
&\lim_{r \downarrow 0}\sum_{0 \leq k < j \leq m}(-1)^{j+k}{m \choose j}\int_{\partial K_r} \mathcal{D}^k H  
\mathcal{D}^{2j-k-1} h \, {\rm d} S \\
- &\lim_{r \downarrow 0}\sum_{0 \leq k < j \leq m}(-1)^{j+k}{m \choose j} \int_{\partial K_r} \mathcal{D}^k \psi  
\mathcal{D}^{2j-k-1} h \, {\rm d} S = 0.
\end{aligned}
\end{equation*}
But for $r$ sufficiently close to zero we have 
\begin{equation*}
\begin{aligned}
&\sum_{0 \leq k < j \leq m} (-1)^{j+k}{m \choose j} \int_{\partial K_r} \mathcal{D}^k \psi  
\mathcal{D}^{2j-k-1} h \, {\rm d} S \\
& = \sum_{j=1}^{m} {m \choose j}  (-1)^j \int_{\partial K_r} \mathcal{D}^{2j-1} h \, {\rm d} S
\end{aligned}
\end{equation*}
so that 
\begin{eqnarray}\label{star}
\begin{aligned}
&\lim_{r \downarrow 0}\sum_{0 \leq k < j \leq m}(-1)^{j+k}{m \choose j} \int_{\partial K_r} \mathcal{D}^k H  
\mathcal{D}^{2j-k-1} h \, {\rm d} S \\
= &\lim_{r \downarrow 0}\sum_{j=1}^m  (-1)^j{m \choose j} \int_{\partial K_r} \mathcal{D}^{2j-1} h \, {\rm d} S.
\end{aligned}
\end{eqnarray}

\medskip
\noindent
Next, we take $g = H$ in Proposition \ref{rep} and apply \eqref{star} to obtain:
\begin{proposition}\label{kjh}
Suppose $K$ has nonempty interior, $h$ is any solution in $H^m(\mathbb{R}^n \setminus K)$ to 
$(I-\Delta)^m h = 0$ weakly on $\mathbb{R}^n \setminus K$, and $H$ is any function in $H^m(\mathbb{R}^n)$ 
such that $H=1$ on $K$. Then we have
$$ \langle H, h \rangle_{H^m(\mathbb{R}^n)} 
= {\rm Vol}(K) + \lim_{r \downarrow 0}\sum_{j=1}^m  (-1)^j {m \choose j} 
\int_{\partial K_r} \mathcal{D}^{2j-1} h \, {\rm d} S.$$  
\end{proposition}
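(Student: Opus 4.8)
The plan is to apply the representation formula of Proposition~\ref{rep} with the specific choice $g = H$, and then to evaluate the resulting limit of boundary integrals by peeling off, near $K$, the constant $1$ from $H$ and observing that the remainder lies in $H^m_0(\mathbb{R}^n \setminus K)$.

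First I would invoke Proposition~\ref{rep}: since $H \in H^m(\mathbb{R}^n)$, it gives
$$\langle H,h\rangle_{H^m(\mathbb{R}^n)} = \int_K H + \lim_{r\downarrow 0}\sum_{0\le k<j\le m}(-1)^{j+k}\binom{m}{j}\int_{\partial K_r}\mathcal{D}^k H\,\mathcal{D}^{2j-k-1}h\,{\rm d}S,$$
and because $H \equiv 1$ on $K$ the first term on the right is exactly ${\rm Vol}(K)$. It therefore only remains to show that the limit of boundary sums equals $\lim_{r\downarrow 0}\sum_{j=1}^m(-1)^j\binom{m}{j}\int_{\partial K_r}\mathcal{D}^{2j-1}h\,{\rm d}S$; this is precisely the content of \eqref{star}, which I would now justify.

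To obtain \eqref{star} I would fix a smooth cut-off $\psi$ of compact support which is identically $1$ on $\{d(x)\le 2\}$ and set $g := (H-1)\psi$. By Lemma~\ref{boundary} we have $g \in H^m_0(\mathbb{R}^n\setminus K)$, so the first assertion of Lemma~\ref{limbound} applies and yields
$$\lim_{r\downarrow 0}\sum_{0\le k<j\le m}(-1)^{j+k}\binom{m}{j}\int_{\partial K_r}\mathcal{D}^k g\,\mathcal{D}^{2j-k-1}h\,{\rm d}S = 0.$$
On $\{d(x)\le 2\}$ we have $\psi\equiv 1$, hence $g = H-1$ and so $H = g + 1$ there; consequently, since $\partial K_r \subseteq \{d(x)=r\}\subseteq\{d(x)\le 2\}$ for all small $r$, we get $\mathcal{D}^k H = \mathcal{D}^k g$ on $\partial K_r$ for $k\ge 1$, while $\mathcal{D}^0 H = \mathcal{D}^0 g + 1$. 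Substituting into the boundary sum and using linearity in the first slot, the contributions with $k\ge 1$ coincide with those for $g$, and the $k=0$ terms differ by the contribution of the constant $1$, which is exactly $\sum_{j=1}^m(-1)^{j}\binom{m}{j}\int_{\partial K_r}\mathcal{D}^{2j-1}h\,{\rm d}S$. Letting $r\downarrow 0$ and using the vanishing limit above yields \eqref{star}; combining this with the displayed identity from Proposition~\ref{rep} completes the proof.

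The one point requiring genuine care is that Lemma~\ref{limbound} cannot be applied to $H$ directly, since $H\notin H^m_0(\mathbb{R}^n\setminus K)$ (indeed $H\equiv 1$ on $K$). The device of subtracting the constant $1$ via the cut-off $\psi$ is what makes the argument work: the remainder $(H-1)\psi$ is in $H^m_0$ and contributes nothing in the limit, while the constant, whose derivatives of order $\ge 1$ vanish on $\partial K_r$ for small $r$, produces the explicit single sum in the statement. Everything else — existence of the relevant limits and integrability of the boundary traces — has already been secured in Proposition~\ref{rep} and Lemmas~\ref{boundary} and \ref{limbound}.
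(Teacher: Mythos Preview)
Your proof is correct and follows essentially the same route as the paper's: you apply Proposition~\ref{rep} with $g=H$, then use Lemma~\ref{boundary} to place $(H-1)\psi$ in $H^m_0(\mathbb{R}^n\setminus K)$ and invoke Lemma~\ref{limbound} to kill its boundary contribution, leaving only the $k=0$ terms coming from the constant~$1$. The paper organises the last step by writing the vanishing boundary sum for $(H-1)\psi$ as the difference of the sums for $H$ and for $\psi$ (and then noting $\psi\equiv 1$ near $K$), whereas you write $H=g+1$ near $K$ directly; these are the same computation.
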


\noindent
Finally, we claim that the terms with $j \leq m/2$ give a contribution 
of zero in this formula. 

\medskip
\noindent
Indeed, if we have $h \in H^m(\mathbb{R}^n)$ and $h = 1$ on $K$,  Lemma \ref{boundary} tells us that 
for suitable smooth $\psi$ of compact support, identically $1$ on a neighbourhood of $K$ we have 
$g = (h-1)\psi \in H^m_0(\mathbb{R}^n \setminus K)$.
So by Lemma \ref{trace}, $\mathcal{D}^i g $ is the zero member of
$H^{m-i-1/2}(\partial K)$ for $0 \leq i \leq m-1$, and continuity of traces shows that we therefore have
$$ \int_{\partial K_r } \mathcal{D}^i g {\rm d}S \to 0 \mbox{ as } r \downarrow 0$$
for $0\leq i \leq m-1$. 


\medskip
\noindent
In particular, since  $\mathcal{D}^i h = \mathcal{D}^i g + \mathcal{D}^i \psi$
near $\partial K$ we have
$$ \int_{\partial K_r } \mathcal{D}^i h {\rm d}S \to 0 \mbox{ as } r \downarrow 0$$
for $1 \leq i \leq m-1$. So 
$$ \int_{\partial K_r } \mathcal{D}^{2j-1} h {\rm d}S \to 0 \mbox{ as } r \downarrow 0$$
for $1 \leq 2j-1 \leq m-1$, that is for $1 \leq j \leq m/2$.

\medskip
\noindent
Hence
$$ \langle H, h \rangle_{H^m(\mathbb{R}^n)} = {\rm {Vol}} (K) + \lim_{r \downarrow 0}\sum_{m/2 < j \leq m} (-1)^j  {m \choose j} 
\int_{\partial K_r} \mathcal{D}^{2j-1} h \, {\rm d} S,$$  
and we have proved:
 
\begin{theorem}\label{main}
Suppose that $K$ is a compact convex set in $\mathbb{R}^n$ which 
has nonempty interior. Let $H$ be any function in $H^m(\mathbb{R}^n)$
such that $H = 1$ on $K$. Then the unique solution $h \in H^m(\mathbb{R}^n)$ to the problem 
$$ (I - \Delta)^m h = 0 \mbox{ weakly on } \mathbb{R}^n \setminus K$$
$$ h = 1 \mbox{ on } K$$
satisfies
$$ \langle H, h\rangle_{H^m(\mathbb{R}^n)} = {\rm Vol}(K) + \sum_{m/2 < j \leq m} 
 (-1)^{j} {{m} \choose j}
\lim_{r \downarrow 0} \int_{\partial K_r} \frac{\partial}{\partial \nu} \Delta^{j-1} h {\rm d}S. $$
In particular, taking $H = h$,
\begin{equation}\label{mainformula}
\|h \|_{H^{m}(\mathbb{R}^n)}^2 = {\rm Vol}(K) + \sum_{m/2 < j \leq m} 
 (-1)^{j} {{m} \choose j}
\lim_{r \downarrow 0} \int_{\partial K_r} \frac{\partial}{\partial \nu} \Delta^{j-1} h {\rm d}S.
\end{equation}
\end{theorem}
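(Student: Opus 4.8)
The plan is to read the formula off from the results already assembled in this section; essentially nothing new is required beyond bookkeeping of which boundary terms survive the limit $r \downarrow 0$. First I would invoke Proposition \ref{kjh}, which for any $H \in H^m(\mathbb{R}^n)$ with $H = 1$ on $K$ already delivers
$$ \langle H, h \rangle_{H^m(\mathbb{R}^n)} = {\rm Vol}(K) + \lim_{r \downarrow 0} \sum_{j=1}^m (-1)^j {m \choose j} \int_{\partial K_r} \mathcal{D}^{2j-1} h \, {\rm d} S; $$
it then remains only to see that the summands with $1 \le j \le m/2$ contribute zero. (Recall that Proposition \ref{kjh} itself came from plugging $g = H$ into the representation formula of Proposition \ref{rep} and simplifying the resulting boundary terms via \eqref{star}, so at this stage the genuine analytic work has already been done.)

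To kill the low-order terms I would argue as follows. Fix a smooth $\psi$ of compact support that is identically $1$ on a neighbourhood of $K$. By Lemma \ref{boundary}, $g := (h-1)\psi$ belongs to $H^m_0(\mathbb{R}^n \setminus K)$, so by Lemma \ref{trace}(ii) each trace $\mathcal{D}^i g|_{\partial K}$ is the zero element of $H^{m-i-1/2}(\partial K)$ for $0 \le i \le m-1$. Applying the continuity of traces under small perturbations of the hypersurface (Lemma \ref{trace}(i)) to the smooth surfaces $\partial K_r$ shrinking to $\partial K$, this gives $\int_{\partial K_r} \mathcal{D}^i g \, {\rm d} S \to 0$ as $r \downarrow 0$ for $0 \le i \le m-1$. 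Near $\partial K$ we have $\mathcal{D}^i h = \mathcal{D}^i g + \mathcal{D}^i \psi$, and $\mathcal{D}^i \psi = 0$ there for $i \ge 1$; hence $\int_{\partial K_r} \mathcal{D}^i h \, {\rm d} S \to 0$ as $r \downarrow 0$ for $1 \le i \le m-1$. Taking $i = 2j-1$, this vanishing holds exactly when $1 \le 2j-1 \le m-1$, i.e. when $1 \le j \le m/2$, so only the terms with $m/2 < j \le m$ survive. This proves the first displayed formula of the theorem. The second, \eqref{mainformula}, is then the special case $H = h$ (legitimate since $h = 1$ on $K$ by hypothesis), together with the identity $\mathcal{D}^{2j-1} h = \frac{\partial}{\partial \nu} \Delta^{j-1} h$.

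I expect the only delicate point in this final step to be the vanishing of the low-order boundary terms, which rests on the continuity of the trace operator as the surface varies and on the membership $(h-1)\psi \in H^m_0(\mathbb{R}^n \setminus K)$ — and hence ultimately on the nonemptiness of ${\rm int}\, K$ via Lemma \ref{Sob}. If I were instead deriving the whole chain from scratch, the real obstacle would be making sense of and controlling the boundary integrals over $\partial K_r$: when $m \le n/2$ the integrands are not continuous up to $\partial K$ and must be interpreted through the trace theorem, their convergence as $r \downarrow 0$ is not automatic and is precisely what forces one to work with the regularised distance and pass to the limit rather than integrate over $\partial K$ directly, and each integration by parts in Lemma \ref{ibp} must be justified by the elliptic regularity of $h$ on $\mathbb{R}^n \setminus K$.
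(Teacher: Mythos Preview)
Your proposal is correct and follows essentially the same route as the paper: invoke Proposition \ref{kjh}, then eliminate the terms with $1 \le j \le m/2$ by observing that $(h-1)\psi \in H^m_0(\mathbb{R}^n \setminus K)$ via Lemma \ref{boundary}, applying Lemma \ref{trace} to get the vanishing of $\mathcal{D}^i g$ on $\partial K$ and its continuity under perturbation of the surface, and finally using $\mathcal{D}^i h = \mathcal{D}^i g + \mathcal{D}^i \psi$ near $\partial K$. Your identification of where the nonempty-interior hypothesis enters (through Lemma \ref{Sob}) is also spot on.
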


\medskip
\noindent
If the boundary of $K$ is sufficiently regular to allow the invocation 
of boundary regularity for elliptic equations we may realise this formula more succinctly as
$$\|h \|_{H^{m}(\mathbb{R}^n)}^2 = {\rm Vol}(K) + \sum_{m/2 < j \leq m} 
 (-1)^{j} {{m} \choose j} \int_{\partial K} \frac{\partial}{\partial \nu} \Delta^{j-1} h {\rm d}S. $$

\medskip
\noindent
The main virtue of Theorem \ref{main} is that it explicitly demonstrates the dependence of 
the $m$-extremal energy upon the volume of $K$ and integrals over its boundary. It is analogous to the representation
of the classical capacity $\inf\{ \int |\nabla u|^2 \, : u = 1 \mbox{ on } K \}$ as $\int_{\partial K} \frac{\partial u}{\partial \nu} {\rm d}S$ for $u$ satisfying $\Delta u = 0$ off $K$, $u = 1$ on $\partial K$ and $u \to 0$ at $\infty$. The fact that 
$\|h \|_{H^{m}(\mathbb{R}^n)}^2 =  \langle h, H\rangle_{H^m(\mathbb{R}^n)}$ for any $H \in H^m(\mathbb{R}^n)$ 
which is identically $1$ on $K$ when $m=(n+1)/2$ is more general and due to Meckes \cite{Meckes14}, 
Proposition 4.2. One may wonder whether the alternating sum in Theorem \ref{main} reflects  
an underlying ``Euler characteristic'', and whether the individual terms appearing might have some separate 
meaning.
It is also of interest to determine whether or not the solutions of the boundary value 
problems are always positive. See for example \cite{Navier} for a discussion of such matters.

\medskip
\noindent
As mentioned in the Introduction, Leinster and Meckes have an alternative approach to calculating 
magnitudes. Indeed, Theorem 4.16 of \cite{LM16}, 
specialised to the case of Euclidean spaces, shows that up to a constant, the
magnitude of a compact subset of $\mathbb{R}^n$ is the integral of its potential function, provided the 
potential function is in $L^1(\mathbb{R}^n)$. In the context of Theorem \ref{main} and formula \eqref{mainformula}
this translates simply as 
$$ \|h\|_{H^{m}(\mathbb{R}^n)}^2 = \int_{\mathbb{R}^n} h.$$
While this formula arguably offers some comptational advantages over formula \eqref{mainformula}, it does not bring 
out the important role that the boundary of $K$ plays. In this regard see also the last remark in 
Section \ref{sec_final}.


\section{Combinatorial preliminaries}\label{combsec}
\medskip
\noindent
For our discussion of solutions to certain ordinary differential equations we shall need 
to consider some special polynomials which have a combinatorial flavour.

\medskip
\noindent
We define $g_0(t) := 1$ and, given $g_j$, we define $g_{j+1}$ by
$$ g_{j+1}(t) := t^3 g_j'(t) + t g_j(t).$$

\begin{lemma}
For $j \geq 1$ we have
$$ g_j(t) = \sum_k c^j_k t^k$$ 
where $c^j_k = 0$ for $k < j$ and $k \geq 2j$, $c^j_j = 1$ for all $j$, and each $c^j_k$ for $j \leq k \leq 
2j-1$ is a nonnegative integer.
\end{lemma}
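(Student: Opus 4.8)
The plan is to prove the four claimed properties of the coefficients $c^j_k$ by induction on $j$, using the recursion $g_{j+1}(t) = t^3 g_j'(t) + t g_j(t)$ directly at the level of coefficients. First I would translate the recursion into a relation between $c^{j+1}_k$ and the $c^j_k$. Writing $g_j(t) = \sum_k c^j_k t^k$, we have $t^3 g_j'(t) = \sum_k k c^j_k t^{k+2}$ and $t g_j(t) = \sum_k c^j_k t^{k+1}$, so comparing coefficients of $t^k$ gives
$$ c^{j+1}_k = (k-2) c^j_{k-2} + c^j_{k-1}. $$
This single identity is the engine for everything. The base case $j=1$ is immediate: $g_1(t) = t^3 \cdot 0 + t \cdot 1 = t$, so $c^1_1 = 1$ and all other coefficients vanish, which matches $k < 1$ or $k \geq 2$ giving zero, $c^1_1 = 1$, and the (empty) range $1 \le k \le 1$... actually $c^1_1=1$ is both the leading and only coefficient, consistent with the claim.

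Next I would run the inductive step. Assume the statement holds for $g_j$, i.e. $c^j_k = 0$ unless $j \le k \le 2j-1$, with $c^j_j = 1$ and each $c^j_k$ a nonnegative integer in that range. For the vanishing outside $j+1 \le k \le 2(j+1)-1 = 2j+1$: if $k \le j$, then both $k-2 \le j-2 < j$ and $k-1 \le j-1 < j$, so $c^j_{k-2} = c^j_{k-1} = 0$, hence $c^{j+1}_k = 0$; if $k \ge 2j+2$, then $k-2 \ge 2j > 2j-1$ and $k-1 \ge 2j+1 > 2j-1$, so again both terms vanish and $c^{j+1}_k = 0$. For the leading coefficient, take $k = j+1$: then $c^{j+1}_{j+1} = (j-1) c^j_{j-1} + c^j_{j} = (j-1)\cdot 0 + 1 = 1$, as required. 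For integrality and nonnegativity in the range $j+1 \le k \le 2j+1$: each of $c^j_{k-2}$ and $c^j_{k-1}$ is a nonnegative integer by the inductive hypothesis (they are zero when the index falls outside $[j,2j-1]$), and the multiplier $k-2 \ge j-1 \ge 0$ is a nonnegative integer, so $c^{j+1}_k = (k-2)c^j_{k-2} + c^j_{k-1}$ is a nonnegative integer.

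I do not expect a genuine obstacle here: the whole argument is a routine coefficient-chasing induction, and the only thing to be careful about is bookkeeping the index shifts so that the ranges $[j,2j-1] \mapsto [j+1,2j+1]$ line up correctly, together with the observation that the map $k \mapsto k-2$ applied to the top of the old range, $k-2 = 2j-1$, i.e. $k = 2j+1$, is exactly what produces the new top coefficient $c^{j+1}_{2j+1} = (2j-1)c^j_{2j-1}$, which is a positive integer (so the degree genuinely increases to $2j+1$, consistent with $c^{j+1}_k = 0$ for $k \ge 2(j+1)$). If one also wants the slightly stronger-looking assertion that $c^j_k$ is \emph{positive} (not merely nonnegative) throughout $j \le k \le 2j-1$, this too follows by the same induction, since $c^{j+1}_k = (k-2)c^j_{k-2} + c^j_{k-1}$ and for $j+1 \le k \le 2j+1$ at least one of the two summands is a strictly positive term coming from an index inside the previous range; but as stated, nonnegativity together with $c^j_j = 1$ is all that is claimed and all that the induction immediately delivers.
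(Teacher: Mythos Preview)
Your proof is correct and is precisely the routine coefficient-chasing induction the paper has in mind when it writes ``The easy proof is left to the reader''; indeed, your recurrence $c^{j+1}_k = (k-2)c^j_{k-2} + c^j_{k-1}$ is just the shifted form of the relation $c^{j+1}_{k+1} = (k-1)c^j_{k-1} + c^j_k$ that the paper derives in the very next lemma. There is nothing to add.
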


\noindent
The easy proof is left to the reader. The defining formula for $g_j$ leads us immediately to a recurrence relation 
for the coefficients $c^j_k$: 

\begin{lemma}\label{comb}
For $j+1 \leq k \leq 2j-1$ we have 
\begin{equation}\label{pascal}
c^{j+1}_{k+1} = (k-1)c^j_{k-1} + c^j_k
\end{equation}
and
\begin{equation}\label{pascal1}
c^{j+1}_{2j+1}= (2j-1) c^j_{2j-1}.
\end{equation}
Moreover for  $j+1 \leq k \leq 2j-1$ we have
$$2j c^{j+1}_{k+1} = (k-1)kc^j_{k-1} + 2kc^j_k.$$
\end{lemma}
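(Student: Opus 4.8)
The plan is to establish the three displayed identities in Lemma~\ref{comb} directly from the recursion $g_{j+1}(t) = t^3 g_j'(t) + t g_j(t)$ by comparing coefficients, and then to derive the third identity as a purely algebraic consequence of the first two. First I would write $g_j(t) = \sum_k c^j_k t^k$, so that $g_j'(t) = \sum_k k c^j_k t^{k-1}$ and hence $t^3 g_j'(t) = \sum_k k c^j_k t^{k+2}$, while $t g_j(t) = \sum_k c^j_k t^{k+1}$. Therefore
$$ g_{j+1}(t) = \sum_k k c^j_k t^{k+2} + \sum_k c^j_k t^{k+1} = \sum_\ell \left( (\ell-2) c^j_{\ell-2} + c^j_{\ell-1} \right) t^\ell,$$
which gives the master recurrence $c^{j+1}_\ell = (\ell-2)c^j_{\ell-2} + c^j_{\ell-1}$ for all $\ell$. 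Substituting $\ell = k+1$ yields $c^{j+1}_{k+1} = (k-1) c^j_{k-1} + c^j_k$, which is exactly \eqref{pascal}; this is valid whenever both terms on the right are in the admissible range, i.e. for $j+1 \le k \le 2j-1$.

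For \eqref{pascal1} I would specialise the master recurrence to the top coefficient. Taking $\ell = 2j+1$ gives $c^{j+1}_{2j+1} = (2j-1) c^j_{2j-1} + c^j_{2j}$, and since by the previous lemma $c^j_{2j} = 0$ (the coefficients vanish for $k \ge 2j$), this reduces to $c^{j+1}_{2j+1} = (2j-1) c^j_{2j-1}$, as claimed. One should also note in passing that the master recurrence with $\ell = j+1$ recovers $c^{j+1}_{j+1} = (j-1)c^j_{j-1} + c^j_j = c^j_j = 1$ (using $c^j_{j-1}=0$), consistent with $c^j_j = 1$ for all $j$, and that it confirms $c^{j+1}_\ell = 0$ for $\ell \le j$ and $\ell \ge 2j+2$, so the stated index ranges are coherent.

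Finally, for the third identity I would simply take \eqref{pascal}, namely $c^{j+1}_{k+1} = (k-1)c^j_{k-1} + c^j_k$ valid for $j+1 \le k \le 2j-1$, and multiply both sides by $2k$:
$$ 2k\, c^{j+1}_{k+1} = 2k(k-1) c^j_{k-1} + 2k\, c^j_k = (k-1)k\, c^j_{k-1} + (k-1)k\, c^j_{k-1} + 2k\, c^j_k.$$
Hmm — that is not quite the target; the target is $2j\, c^{j+1}_{k+1} = (k-1)k\, c^j_{k-1} + 2k\, c^j_k$, so the correct move is instead to multiply \eqref{pascal} not by $2k$ but to observe the target follows from combining \eqref{pascal} with the identity obtained by multiplying the master recurrence by an appropriate weight; the cleanest route is to multiply \eqref{pascal} through by $2$ and then replace one copy of $2(k-1)c^j_{k-1}$ using the relation $kc^j_k$ — more carefully, one should verify directly that $(k-1)k\, c^j_{k-1} + 2k\, c^j_k = 2j\bigl((k-1)c^j_{k-1} + c^j_k\bigr)$ would require $k(k-1)c^j_{k-1} = 2j(k-1)c^j_{k-1}$ and $2k c^j_k = 2j c^j_k$, which is false in general, so the third identity is genuinely a \emph{separate} consequence of the recursion rather than an algebraic rearrangement of \eqref{pascal} alone.

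I expect the main obstacle to be exactly this last point: deriving the third identity requires going back to the generating-function recursion and extracting a second relation among the $c^j_k$. The natural approach is to differentiate or multiply the defining relation $g_{j+1}(t) = t^3 g_j'(t) + t g_j(t)$ in a way that produces the factor $2j$ — for instance by exploiting a homogeneity or degree relation for $g_j$, or by writing $g_{j+1}$ in terms of $(t^{-2} g_j)'$ or a similar combination whose coefficient extraction carries a factor $k$ that telescopes against $2j$. Concretely I would look for an identity of the shape $2 g_{j+1}(t) = \alpha(t) g_j'(t) + \beta(t) g_j(t) + \gamma(t) g_j''(t)$ engineered so that comparing $t^{k+1}$-coefficients yields $2j\, c^{j+1}_{k+1}$ on the left (using $\sum_k k c^j_k = $ a weighted degree count, or simply that $g_j$ has coefficients supported in $[j, 2j-1]$ so that $\sum_k (\text{something}) = 2j \times (\cdots)$ in the relevant range), and $(k-1)k c^j_{k-1} + 2k c^j_k$ on the right. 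Once the correct such functional identity is pinned down, the coefficient comparison is routine; identifying it is the crux, and I would expect it to follow from a short manipulation using the fact that $t \frac{d}{dt}$ acts on $g_j$ with eigenvalue-like behaviour between its extreme degrees $j$ and $2j-1$.
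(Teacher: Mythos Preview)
Your derivations of \eqref{pascal} and \eqref{pascal1} by coefficient comparison from the master recurrence $c^{j+1}_\ell = (\ell-2)c^j_{\ell-2} + c^j_{\ell-1}$ are correct and essentially identical to the paper's argument.

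The gap is in the third identity. You correctly diagnose that it is \emph{not} an algebraic rearrangement of \eqref{pascal}: subtracting $2j$ times \eqref{pascal} from the target leaves the residual relation
\[
(2j-k)(k-1)\,c^j_{k-1} \;=\; 2(k-j)\,c^j_k,
\]
which is a horizontal recurrence in $k$ (with $j$ fixed) that you cannot extract from $g_{j+1} = t^3 g_j' + t g_j$ by a single further coefficient comparison. Your proposed search for an auxiliary functional identity producing a factor of $2j$ is not carried to any conclusion, and in fact no such short generating-function trick is available here: the recursion relates different values of $j$, not different values of $k$ at fixed $j$.

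The paper sidesteps this entirely. Having established \eqref{pascal} and \eqref{pascal1} (together with $c^j_j = 1$), it observes that these determine the $c^j_k$ uniquely and solves them to get the closed form
\[
c^j_k \;=\; \frac{(k-1)(k-2)\cdots(2j-k)}{2^{k-j}(k-j)!}\qquad (j < k \le 2j-1),
\]
from which the horizontal ratio $c^j_k / c^j_{k-1} = (k-1)(2j-k)\big/\bigl(2(k-j)\bigr)$ is immediate, and the third identity then reduces to a one-line verification. So the missing idea in your proposal is simply to pass through the explicit formula rather than to hunt for a second differential relation on the $g_j$.
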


\begin{proof}
We have by definition
\begin{equation*}
\begin{aligned}
g_{j+1}(t) &= t^3\left(\sum_{k=j}^{2j-1} c^j_k k t^{k-1}\right) + t \left(\sum_{k=j}^{2j-1} c^j_k  t^{k}\right)\\
&=  \sum_{k=j}^{2j-1} c^j_k k t^{k+2} + \sum_{k=j}^{2j-1} c^j_k  t^{k+1} 
=  \sum_{k=j+1}^{2j} c^j_{k-1} (k-1) t^{k+1} + \sum_{k=j}^{2j-1} c^j_k  t^{k+1}\\
&= c^j_j t^{j+1} +  \sum_{k=j+1}^{2j-1}  (c^j_{k-1} (k-1) + c^j_k)t^{k+1} + c^{j}_{2j-1}(2j-1)t^{2j+1}.
\end{aligned}
\end{equation*}

On the other hand, 
$$ g_{j+1}(t) = \sum_{k=j+1}^{2j+1}c^{j+1}_kt^k = \sum_{k=j}^{2j}c^{j+1}_{k+1}t^{k+1},$$
and then \eqref{pascal} and \eqref{pascal1} follow by comparing the two expressions.
The final identity follows readily from the expression \eqref{expl} below.

\end{proof}
\medskip
\noindent
Pictorially, identity \eqref{pascal} can be represented as a Pascal triangle as follows, (with the first row
representing $c_1^1$, the second $c^2_2, c^2_3$, etc.):

\begin{eqnarray*}
\begin{aligned}
&1 \\
1 \;& \; \; \; 1 \\
1 \; \; \; &3 \; \; \; \, 3 \\
1 \; \; \; 6 \; & \; \; 15 \;  \; \; 15 \\
1 \; \; \, 10 \; \; \, 4&5 \; \;  105 \; \; 105\\ 
1 \; \; \;  15 \;\;  105 & \; 420 \; 945 \; 945 \\
\vdots
\end{aligned}
\end{eqnarray*}
An explicit formula for $c^j_k$ for $j < k \leq 2j-1$ is given by
\begin{equation}\label{expl}
 c^j_k = \frac{(k-1)(k-2) \cdots (2j-k)}{2^{k-j} (k-j)!}
\end{equation}
and this follows immediately from equations \eqref{pascal} and \eqref{pascal1} together with the fact that 
$c^j_j = 1$ for all $j$. 

\medskip
\noindent
The numbers $c^j_k$ are reminiscent of Stirling numbers and are sometimes called Bessel numbers of the first kind; 
the $g_j$ are related to Grosswald's polynomials. See \cite{grosswald} and \cite{web}. The $g_j$ 
can also be expressed in terms of generalised Laguerre polynomials and hypergeometric functions; we thank Jim Wright, 
Chris Smyth and Adri Olde Daalhuis for pointing these connections out to us.



\section{Explicit radial solutions to $(I - \Delta)^m h = 0$}\label{Sec_ODE}
\noindent
When $K$ is a Euclidean ball, the unique solution to problem \eqref{PDE} with $g = 1$ is necessarily radial
(since an average over rotates of a solution is also a solution). 
So in this section we seek radial solutions $h \in H^m(\mathbb{R}^n)$ to
$$ (I- \Delta)^m h = 0 \mbox{ on } \{|x| > R\}$$
where $R > 0$. Then we shall impose the extra (boundary) condition $h = 1$ on $\{|x| \leq R\}$. 
We are particularly interested in the case where $n$ is odd and
$m = (n+1)/2$. We shall systematically abuse notation and not distinguish between the function 
$h(|\cdot|)$ defined on $\{|x| > R\} $ in $\mathbb{R}^n$ and the function $h(\cdot)$ defined on $(R, \infty)$.

\medskip
\noindent
Recall that the action of the Laplacian in $\mathbb{R}^n$ on radial functions is given by
$$ \Delta f(r) = f''(r) + \frac{n-1}{r}f'(r)$$
and so we define, for an integer $\nu \geq 0$,  
$$\Delta_\nu f := f'' + \frac{2 \nu}{r} f'$$
and consider solutions of 
\begin{equation}\label{fds}
(I - \Delta_\nu)^m h = 0
\end{equation} 
on $(R, \infty)$ where $R > 0$, with $h(|\cdot|) \in L^2(\mathbb{R}^n)$. 
(We hope that the the use of $\nu$ both as a proxy for $(n-1)/2$ in this section and as a normal 
direction to $K$ in previous sections will not cause confusion.)
If $n$ is odd and $\nu=(n-1)/2$ this amounts 
to finding radial solutions in $L^2$ of 
$$ (I - \Delta)^m h = 0  \mbox{ on } |x| > R.$$

\medskip
\noindent
Let us set up some notation. We define, for $k \geq 0$,
$$ f_k(r) := \frac{e^{-r}}{r^k}.$$
We define $\psi_0(r) = e^{-r}$ and, for $j \geq 1$
$$ \psi_j(r) := \sum_{k=j}^{2j-1} c^j_k f_k(r) = e^{-r} g_j(1/r)$$
where the $g_j$ are defined in Section \ref{combsec}.

\medskip
\noindent
From Pascal's triangle for the coefficients $c^j_k$, (Lemma \ref{comb}), we can write down the 
first few $\psi_j$ explicitly: 
\begin{eqnarray*}
\begin{aligned}
\psi_0 = f_0 &= e^{-r}, \\
\psi_1 = f_1 &= e^{-r}/r, \\
\psi_2 = f_2 + f_3 &= e^{-r}(r^{-2} + r^{-3}),\\
\psi_3 = f_3 + 3 f_4 +3 f_5 &= e^{-r}(r^{-3} + 3 r^{-4} + 3 r^{-5}),\\ 
\psi_4 = f_4 + 6 f_5 + 15 f_6 + 15 f_7 &= e^{-r}(r^{-4} + 6 r^{-5} + 15 r^{-6} + 15r^{-7}),\\ 
\psi_5 = f_5 + 10 f_6 + 45 f_7 + 105 f_8 + 105 f_9  &= e^{-r}(r^{-5}
+ 10 r^{-6} + 45 r^{-7} + 105r^{-8} +105r^{-9} )\\
\vdots
\end{aligned}
\end{eqnarray*}

\begin{lemma}\label{diff}
For $k \geq 0$ and $\nu \geq 0$ we have
$$ (\Delta_\nu - I)f_k = 2(k - \nu)f_{k+1} + k(k+1-2\nu)f_{k+2}.$$
In particular, 
$$ (\Delta_\nu - I)f_0 = -2 \nu f_{1}.$$
\end{lemma}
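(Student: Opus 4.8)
The plan is to prove the identity by direct computation, exploiting two elementary facts about the functions $f_k(r) = e^{-r}/r^k$. First, multiplication by $1/r$ simply shifts the index: $\frac{1}{r} f_k = f_{k+1}$. Second, differentiation produces a two-term expression in the same family: since $f_k' = -e^{-r} r^{-k} - k e^{-r} r^{-k-1}$, we have
$$ f_k' = -f_k - k f_{k+1}. $$
These two observations will let everything collapse back into the $f_k$ family with no leftover terms.

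From here I would iterate. Applying the differentiation rule twice gives
$$ f_k'' = -f_k' - k f_{k+1}' = (f_k + k f_{k+1}) + k(f_{k+1} + (k+1) f_{k+2}) = f_k + 2k f_{k+1} + k(k+1) f_{k+2}. $$
For the first-order term, using the shift identity,
$$ \frac{2\nu}{r} f_k' = \frac{2\nu}{r}\bigl(-f_k - k f_{k+1}\bigr) = -2\nu f_{k+1} - 2\nu k f_{k+2}. $$
Adding these two displays yields $\Delta_\nu f_k = f_k + 2k f_{k+1} + k(k+1) f_{k+2} - 2\nu f_{k+1} - 2\nu k f_{k+2}$, and subtracting $f_k$ (i.e. applying $\Delta_\nu - I$) cancels the leading term and regroups the rest as
$$ (\Delta_\nu - I) f_k = 2(k-\nu) f_{k+1} + k(k+1-2\nu) f_{k+2}, $$
which is exactly the claimed formula. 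The stated special case is just $k=0$: the $f_{k+2}$ coefficient $k(k+1-2\nu)$ vanishes and the $f_{k+1}$ coefficient becomes $-2\nu$.

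There is really no substantive obstacle here; the entire content is the bookkeeping of indices, and the two identities $\frac{1}{r}f_k = f_{k+1}$ and $f_k' = -f_k - kf_{k+1}$ make that bookkeeping essentially automatic. The only point requiring a modicum of care is to apply the differentiation identity to $f_{k+1}$ (not $f_k$) when computing $f_k''$, so that the index on the final term is correctly $k+2$ with coefficient $k(k+1)$ rather than $k^2$.
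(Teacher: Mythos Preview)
Your proof is correct and follows essentially the same route as the paper: both compute $f_k' = -f_k - kf_{k+1}$, iterate to get $f_k'' = f_k + 2kf_{k+1} + k(k+1)f_{k+2}$, and then assemble $(\Delta_\nu - I)f_k$ using the shift identity $\frac{1}{r}f_k = f_{k+1}$.
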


\begin{proof}
Let us calculate 
$$f_k' = -\frac{e^{-r}}{r^k} - k \frac{e^{-r}}{r^{k+1}} = -f_k - k f_{k+1}$$
so that 
\begin{equation*}
\begin{aligned}
f_k'' &= -f_k' - k f'_{k+1}\\
& = -(-f_k - kf_{k+1}) -k(-f_{k+1} - (k+1)f_{k+2}) \\
&=  f_k + 2k f_{k+1} +k(k+1)f_{k+2}.
\end{aligned}
\end{equation*}
Therefore
\begin{equation*}
\begin{aligned}
(\Delta_\nu - I)f_k &=  f_k  + 2k f_{k+1} +k(k+1)f_{k+2} -
  2\nu\left(\frac{f_k}{r} + \frac{kf_{k+1}}{r}\right) -f_k \\
 &= 2k f_{k+1} +k(k+1)f_{k+2} - 2\nu\left(f_{k+1} + k f_{k+2}\right)\\
 &=  2(k- \nu)f_{k+1} + k(k+1-2\nu)f_{k+2}.
\end{aligned}
\end{equation*}

\end{proof}

\begin{proposition}\label{key}
For $j \geq 0$ and $\nu \geq 0$ we have
$$(\Delta_\nu -I) \psi_j  = 2(j -\nu)\psi_{j+1}.$$
In particular, 
$$(\Delta_\nu -I) \psi_\nu  = 0.$$

\end{proposition}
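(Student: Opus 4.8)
The plan is to prove the identity $(\Delta_\nu - I)\psi_j = 2(j-\nu)\psi_{j+1}$ directly from the definitions, using Lemma \ref{diff} together with the combinatorial recurrence in Lemma \ref{comb}. The case $j=0$ is immediate since $\psi_0 = f_0$ and Lemma \ref{diff} gives $(\Delta_\nu - I)f_0 = -2\nu f_1 = 2(0-\nu)\psi_1$. So I would assume $j \geq 1$ and write $\psi_j = \sum_{k=j}^{2j-1} c^j_k f_k$, apply $(\Delta_\nu - I)$ termwise using Lemma \ref{diff}, and then reindex the resulting sum so as to compare coefficients of each $f_\ell$ against the claimed answer $2(j-\nu)\psi_{j+1} = 2(j-\nu)\sum_{\ell = j+1}^{2j+1} c^{j+1}_\ell f_\ell$.

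Concretely, applying Lemma \ref{diff} gives
$$(\Delta_\nu - I)\psi_j = \sum_{k=j}^{2j-1} c^j_k \bigl( 2(k-\nu) f_{k+1} + k(k+1-2\nu) f_{k+2} \bigr).$$
Collecting the coefficient of $f_\ell$ for a fixed $\ell$ with $j+1 \leq \ell \leq 2j+1$, I get contributions $2(\ell-1-\nu)c^j_{\ell-1}$ from the first family (when $\ell - 1$ lies in the range $[j, 2j-1]$) and $(\ell-2)(\ell-1-2\nu)c^j_{\ell-2}$ from the second (when $\ell-2$ lies in $[j,2j-1]$). The task is then to check that this equals $2(j-\nu)c^{j+1}_\ell$ for every $\ell$. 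Separating the $\nu$-dependent and $\nu$-independent parts, this is equivalent to the two identities
$$(\ell-1)c^j_{\ell-1} + \tfrac12(\ell-2)(\ell-1)c^j_{\ell-2} = j\, c^{j+1}_\ell \quad\text{and}\quad c^j_{\ell-1} + (\ell-2)c^j_{\ell-2} = c^{j+1}_\ell,$$
holding for all relevant $\ell$ (with the convention that $c^j_k = 0$ outside $j \leq k \leq 2j-1$, and $c^j_j = 1$). The second of these is exactly the Pascal-type recurrence \eqref{pascal} (with \eqref{pascal1} covering the top endpoint $\ell = 2j+1$ and the bottom endpoint $\ell = j+1$ handled by $c^j_j = 1$, $c^j_{j-1}=0$). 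The first identity, after substituting the second, reduces to $2j\, c^{j+1}_{\ell} = \ell(\ell-1)c^j_{\ell-1} + 2\cdot\text{(something)}$ — precisely the ``moreover'' statement $2j c^{j+1}_{k+1} = (k-1)k c^j_{k-1} + 2k c^j_k$ of Lemma \ref{comb} (reindexed via $k = \ell - 1$), so it follows from that lemma as well.

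I expect the main obstacle to be bookkeeping at the endpoints of the summation range rather than anything conceptual: one must verify that the termwise identities above remain correct when $\ell$ equals $j+1$, $2j$, or $2j+1$, where some of the combinatorial coefficients vanish or coincide with the special values $c^j_j = 1$ and $c^{j+1}_{2j+1} = (2j-1)c^j_{2j-1}$. Once the coefficient-matching is verified for all $\ell$ in the range, summing over $\ell$ yields $(\Delta_\nu - I)\psi_j = 2(j-\nu)\psi_{j+1}$ as claimed. The final assertion $(\Delta_\nu - I)\psi_\nu = 0$ is then the special case $j = \nu$, where the scalar factor $2(j-\nu)$ vanishes; note this requires $\nu$ to be a nonnegative integer so that $\psi_\nu$ is one of the functions already defined.
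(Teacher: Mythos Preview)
Your proposal is correct and follows essentially the same route as the paper: apply Lemma~\ref{diff} termwise to $\psi_j = \sum c^j_k f_k$, compare coefficients of $f_\ell$ with those of $2(j-\nu)\psi_{j+1}$, and split the resulting identity into its $\nu$-free and $\nu$-linear parts, which are precisely the recurrences of Lemma~\ref{comb}. The paper organises the reindexing slightly differently (keeping the summation variable $k$ and peeling off the endpoint terms explicitly), but the logic and the identities invoked are identical.
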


\begin{proof}
The case $j=0$ is a special case of Lemma \ref{diff}. For $j \geq 1$
we have  
\begin{equation*}
\begin{aligned}
(\Delta_\nu -I) \psi_j &= (\Delta_\nu -I) \sum_{k=j}^{2j-1} c^j_k
  f_k = \sum_{k=j}^{2j-1} c^j_k  (\Delta_\nu -I)f_k \\
&= \sum_{k=j}^{2j-1} c^j_k \left[ -2(\nu-k)f_{k+1} + k(k+1-2\nu)f_{k+2}\right]
\end{aligned}
\end{equation*}
by Lemma \ref{diff}. Rearranging, this equals
\begin{equation*}
\begin{aligned}
\sum_{k=j}^{2j-1} & 2(k - \nu)c^j_k f_{k+1} + \sum_{k=j+1}^{2j} (k-1)(k-2\nu)c^j_{k-1}f_{k+1}\\
=& \; \; 2(j- \nu) c^j_j f_{j+1} + (2j-1)(2j-2\nu) c^j_{2j-1}f_{2j+1} + \\
&+ \sum_{k=j+1}^{2j-1} \{2(k - \nu)c^j_k +  (k-1)(k-2\nu)c^j_{k-1}\}f_{k+1}\\
=& \; \; 2(j-\nu)\{f_{j+1} +(2j-1) c^j_{2j-1}f_{2j+1}\} +\\
& + \sum_{k=j+1}^{2j-1} \{2(k -\nu)c^j_k + (k-1)(k-2\nu)c^j_{k-1}\}f_{k+1}.
\end{aligned}
\end{equation*}

\medskip
\noindent
Now since
$$ 2(j- \nu)\psi_{j+1}  = 2(j-\nu) \sum_{k=j+1}^{2j+1} c^{j+1}_k f_k(r) =  2(j-\nu) \sum_{k=j}^{2j} c^{j+1}_{k+1} f_{k+1}(r)    $$
we see that the coefficients of $f_{j+1}$ agree (as $c_j^j = 1$ for all $j$), and it is therefore enough to see that 
$$ c^{j+1}_{2j+1} = (2j-1) c^j_{2j-1}$$
and, for $ j+1 \leq k \leq 2j-1$ and arbitrary $\nu$
$$ 2(j- \nu) c^{j+1}_{k+1} = 2(k- \nu)c^j_k +  (k-1)(k-2\nu)c^j_{k-1}.$$
This latter splits as 
 $$c^{j+1}_{k+1} = c^j_k +(k-1)c^j_{k-1}$$ 
(coefficient of $\nu$) and 
$$2j c^{j+1}_{k+1} = 2kc^j_k + (k-1)kc^j_{k-1}$$
($\nu=0$). All of these are true identities by Lemma \ref{comb}.
\end{proof}

\medskip
\noindent
\begin{corollary}\label{calc}
For $j \geq 0$ we have 
\begin{equation}\label{deriv}
\psi_j'(r) = -r \psi_{j+1}(r).
\end{equation}
For $m \geq 1$, $\nu \geq 0$, and either $0 \leq j \leq \nu -m$ or $j > \nu$ we have
\begin{equation}\label{gfd}
(\Delta_\nu - I)^m \psi_j = 2^m (j+m-1 - \nu)(j+m-2 - \nu)\cdots (j- \nu)\psi_{j+m},
\end{equation}
while for $\nu -m < j \leq \nu$ we have
\begin{equation}\label{dfg}
(\Delta_\nu - I)^m \psi_j = 0.
\end{equation}


\medskip
\noindent
Finally, for $m \geq 1$, $\nu \geq 0$ and $\psi= \sum_{j=0}^\nu \alpha_j \psi_j$, 
\begin{equation}\label{lap}
\Delta_\nu^m \psi = \sum_{j=0}^{\nu-m} \alpha_j 2^m (j+m-1 - \nu)\cdots (j- \nu)\psi_{j+m}
-  \sum_{k=0}^{m-1} (-1)^{m-k} {m \choose k} \Delta_\nu^k \psi.
\end{equation}
\end{corollary}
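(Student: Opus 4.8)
The plan is to deduce all four assertions from Proposition \ref{key} and the binomial theorem, the only genuinely computational ingredient being the first-order identity \eqref{deriv}, which I would prove by writing $\psi_j(r) = e^{-r} g_j(1/r)$ and differentiating. Since $\frac{d}{dr}(1/r) = -1/r^2$, this gives $\psi_j'(r) = -e^{-r}\bigl(g_j(1/r) + r^{-2} g_j'(1/r)\bigr)$; and by the defining recursion $g_{j+1}(t) = t^3 g_j'(t) + t g_j(t)$ evaluated at $t = 1/r$, the bracketed expression equals $r\, g_{j+1}(1/r)$, whence $\psi_j'(r) = -r\, e^{-r} g_{j+1}(1/r) = -r\,\psi_{j+1}(r)$. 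The case $j = 0$ is checked directly from $\psi_0 = e^{-r}$ and $\psi_1 = e^{-r}/r$. (Indeed the recursion defining the $g_j$ was engineered precisely so that \eqref{deriv} holds.)

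Next, \eqref{gfd} and \eqref{dfg} would follow by induction on $m$, the base case $m=1$ being Proposition \ref{key}, which holds for every integer $j \geq 0$. For the inductive step, writing $(\Delta_\nu - I)^m \psi_j = (\Delta_\nu - I)^{m-1}\bigl(2(j-\nu)\psi_{j+1}\bigr) = 2(j-\nu)(\Delta_\nu - I)^{m-1}\psi_{j+1}$ and applying the inductive hypothesis to $\psi_{j+1}$ (with the product index shifted) yields
$$(\Delta_\nu - I)^m \psi_j = 2^m (j+m-1-\nu)(j+m-2-\nu)\cdots(j-\nu)\,\psi_{j+m}$$
for all $j \geq 0$. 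The product $\prod_{i=0}^{m-1}(j+i-\nu)$ contains a zero factor exactly when $\nu \in \{j, j+1, \dots, j+m-1\}$, i.e. when $\nu - m < j \leq \nu$, which gives \eqref{dfg}; for $0 \leq j \leq \nu - m$ or $j > \nu$ no factor vanishes and the displayed formula is precisely \eqref{gfd}.

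For \eqref{lap}, since $\Delta_\nu$ commutes with the identity operator the binomial theorem gives $(\Delta_\nu - I)^m = \Delta_\nu^m + \sum_{k=0}^{m-1}(-1)^{m-k}\binom{m}{k}\Delta_\nu^k$, so that $\Delta_\nu^m\psi = (\Delta_\nu - I)^m\psi - \sum_{k=0}^{m-1}(-1)^{m-k}\binom{m}{k}\Delta_\nu^k\psi$ for every $\psi$. Taking $\psi = \sum_{j=0}^{\nu} \alpha_j\psi_j$ and applying the previous step termwise, the indices $j$ with $\nu - m < j \leq \nu$ contribute nothing by \eqref{dfg}, so $(\Delta_\nu - I)^m\psi = \sum_{j=0}^{\nu-m}\alpha_j\,2^m (j+m-1-\nu)\cdots(j-\nu)\,\psi_{j+m}$ (read as $0$ when $m > \nu$); substituting this into the previous identity yields \eqref{lap}.

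There is no real obstacle here: the combinatorial difficulty has already been absorbed into Proposition \ref{key} and Lemma \ref{comb}. The only point requiring attention is the bookkeeping in \eqref{lap} — correctly truncating the summation range by means of \eqref{dfg}, and handling the degenerate empty-sum case $m > \nu$ correctly.
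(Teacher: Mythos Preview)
Your proof is correct and, for \eqref{gfd}, \eqref{dfg} and \eqref{lap}, essentially identical to the paper's: repeated application of Proposition \ref{key} (your induction) followed by the binomial theorem and truncation via \eqref{dfg}.

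The only genuine difference is in the proof of \eqref{deriv}. You compute directly from $\psi_j(r) = e^{-r} g_j(1/r)$ and the defining recursion for the $g_j$, which is entirely valid and has the merit of explaining \emph{why} the $g_j$ are defined as they are. The paper instead observes that the identity $(\Delta_\nu - I)\psi_j = 2(j-\nu)\psi_{j+1}$ of Proposition \ref{key} reads
\[
\psi_j'' + \tfrac{2\nu}{r}\psi_j' - \psi_j = 2(j-\nu)\psi_{j+1},
\]
divides through by $\nu$, and lets $\nu \to \infty$ (legitimate because the $\psi_j$ themselves do not depend on $\nu$). This gives $\tfrac{2}{r}\psi_j' = -2\psi_{j+1}$, i.e.\ \eqref{deriv}, without any further computation. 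Your route is more elementary and self-contained; the paper's is slicker but requires noticing that the $\nu$-dependence in Proposition \ref{key} is only in the coefficients, not in the functions.
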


\begin{proof}
Write the first identity of Proposition \ref{key} as
$$ \psi''_j + \frac{2 \nu}{r} \psi_j' - \psi_j  = 2(j -\nu)\psi_{j+1},$$
divide by $\nu$, and let $\nu \to \infty$ to obtain \eqref{deriv}.

\medskip
\noindent
Let $j \geq 0$ and $m \geq 1$. By repeated application of Proposition \ref{key} we obtain \eqref{gfd} and 
\eqref{dfg}. 

\medskip
\noindent
By the binomial theorem, 
$$(\Delta_\nu - I)^m \psi_j =  \sum_{k=0}^m (-1)^{m-k} {m \choose k} \Delta_\nu^k \psi_j,$$
so that for $j \leq \nu - m$, by \eqref{gfd},
\begin{equation}\label{rew}
\Delta_\nu^m \psi_j = 2^m (j+m-1 - \nu)\cdots (j- \nu)\psi_{j+m}
- \sum_{k=0}^{m-1}(-1)^{m-k} {m \choose k} \Delta_\nu^k \psi_j
\end{equation}
while for $\nu - m < j \leq \nu$, by \eqref{dfg}, 
\begin{equation}\label{wer}
\Delta_\nu^m \psi_j = - \sum_{k=0}^{m-1}(-1)^{m-k} {m \choose k} \Delta_\nu^k \psi_j.
\end{equation}

\medskip
\noindent
Finally \eqref{lap} follows from \eqref{rew} and \eqref{wer} by taking linear combinations.
\end{proof}

\medskip
\noindent
We shall use Corollary \ref{calc} to systematically turn the calculus into algebra when fitting the appropriate 
boundary conditions to the problem, and when calculating the right hand side of formula \eqref{mainformula} 
in Theorem \ref{main} in the radial case. Note that \eqref{lap} tells us that for all $m \geq 1$, 
if $\psi= \sum_{j=0}^\nu \alpha_j \psi_j$, then $\Delta^m_\nu \psi$ can be expressed in terms of 
$\{\psi_0, \dots, \psi_\nu\}$.

\medskip
\noindent
Returning now to the ordinary differential equation \eqref{fds}, we see that
\eqref{dfg} from Corollary \ref{calc} tells us that the general solution to
$$(I - \Delta_\nu)^m h = 0$$ 
on $\{r> 0\} $ with $h(|\cdot|) \in L^2(\mathbb{R}^n)$ at infinity (for suitable $n$) 
is given by  a linear combination of $\{ \psi_\nu, \psi_{\nu-1}, \dots , \psi_{\nu-m +1}\}.$ 
(Routine calculations similar to those carried out at the beginning of
this section reveal that the other $m$ linearly independent solutions will feature
$e^{+r}$, and so will not have the requisite decay at infinity to belong to any
$L^2(\mathbb{R}^n)$.)




\medskip
\noindent
In particular, taking $\nu = (n-1)/2$ for $n$ odd, the general radial solution to
$$(I - \Delta)^m h = 0 \mbox{ on } \mathbb{R}^n \setminus \{0\}$$
with $h \in L^2(\mathbb{R}^n)$ at infinity is 
$$ \alpha_\nu \psi_v(|x|) + \cdots + \alpha_{\nu-m +1} \psi_{\nu-m +1}(|x|)$$
where the $\alpha$'s are arbitrary scalars. 

\medskip
\noindent
Specialising further, taking $m = (n+1)/2$,  we obtain:

\begin{proposition}\label{sox}
Suppose $n$ is odd. Then the general radial solution to
$$(I - \Delta)^{(n+1)/2} h = 0 \mbox{ on } \mathbb{R}^n \setminus \{0\}$$
with $h \in L^2(\mathbb{R}^n)$ at infinity is 
$$ h(x) = \alpha_0 \psi_0(|x|) + \cdots +  \alpha_{(n-1)/2} \psi_{(n-1)/2}(|x|)$$
where the $\alpha$'s are arbitrary scalars. 
\end{proposition}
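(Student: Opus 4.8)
The plan is to combine the explicit solutions produced by Corollary \ref{calc} with a dimension count and an asymptotic analysis at infinity. First I would observe that, restricted to radial functions, the equation $(I-\Delta)^{(n+1)/2} h = 0$ on $\{|x| > 0\}$ becomes the ordinary differential equation $(I - \Delta_\nu)^m h = 0$ on $(0,\infty)$, where $\nu = (n-1)/2$ and $m = (n+1)/2$; indeed any radial distributional solution on the punctured space restricts, on each annulus, to a classical solution of this ODE, and conversely. Since the coefficients of this operator are real-analytic on $(0,\infty)$ and the operator has order $2m = n+1$, its solution space on $(0,\infty)$ is exactly $(n+1)$-dimensional.

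Next I would exhibit $m$ linearly independent solutions inside this space. Taking $\nu = (n-1)/2$ and $m = (n+1)/2$ in formula \eqref{dfg} of Corollary \ref{calc}, the inequality $\nu - m < j \leq \nu$ becomes $-1 < j \leq (n-1)/2$, so each of $\psi_0, \psi_1, \dots, \psi_{(n-1)/2}$ --- that is, exactly $m$ functions --- satisfies $(I - \Delta_\nu)^m \psi_j = 0$. Their linear independence follows from the explicit expansions $\psi_j = e^{-r} g_j(1/r)$ of Section \ref{combsec}: since the lowest power of $1/r$ occurring in $g_j$ is $t^j$, with coefficient $c^j_j = 1$, the family $(\psi_j)_{0 \le j \le \nu}$ is triangular with respect to the linearly independent system $(e^{-r} r^{-k})_{k \ge 0}$, hence independent. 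Each $\psi_j$ decays exponentially at infinity, so $\psi_j(|\cdot|) \in L^2(\{|x| > R\})$ for every $R > 0$.

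It then remains to show that no $L^2$-at-infinity solution has been missed, and this is the step I expect to be the main obstacle. The idea is that the $m$-dimensional complement of $\mathrm{span}\{\psi_0,\dots,\psi_\nu\}$ in the full $(n+1)$-dimensional solution space consists of functions that grow exponentially. One way to see this: the second-order operator $I - \Delta_\nu$ has $\psi_\nu = e^{-r} g_\nu(1/r)$ as a solution of $(I - \Delta_\nu)u = 0$, and reduction of order --- or a direct indicial/asymptotic analysis of $u'' + (2\nu/r)u' - u = 0$ along the lines of the computations at the start of this section --- produces a second, independent solution asymptotic to $e^{+r}$ times a Laurent polynomial in $r$; iterating through the $m$-fold composition shows that every solution of $(I-\Delta_\nu)^m h = 0$ is a linear combination of terms $e^{-r} r^{-k}$ and $e^{+r} r^{\ell}$, the subspace on which the $e^{+r}$ part is nonzero being exactly $m$-dimensional and meeting our span only in $0$. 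Any such solution fails to lie in $L^2(\{|x| > R\})$ for any $R$. Therefore the radial solutions that are $L^2$ at infinity are precisely the linear combinations $\alpha_0 \psi_0(|x|) + \cdots + \alpha_{(n-1)/2}\psi_{(n-1)/2}(|x|)$, which is the assertion.
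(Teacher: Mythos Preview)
Your proposal is correct and follows essentially the same approach as the paper: both use \eqref{dfg} from Corollary \ref{calc} to produce the $m=(n+1)/2$ decaying solutions $\psi_0,\dots,\psi_\nu$, invoke the dimension count for a linear ODE of order $2m$, and dispose of the remaining $m$ solutions by observing that they feature $e^{+r}$ and hence fail to be $L^2$ at infinity. You supply more detail than the paper does on linear independence (the triangular structure via the leading coefficients $c^j_j=1$) and on why the complementary solutions grow exponentially (reduction of order / asymptotic analysis of $u'' + (2\nu/r)u' - u = 0$), whereas the paper simply labels the latter as ``routine calculations similar to those carried out at the beginning of this section.''
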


\section{The algorithm for extremal energy and magnitude}\label{sec:alg}
\subsection{Fitting the boundary conditions}\label{AA}
The next task is to determine the $\alpha_j$'s which permit the solution 
$h = \sum_{j=0}^{(n-1)/2} \alpha_j \psi_j(|\cdot|)$ of
\begin{eqnarray*}
\begin{aligned}
(I - \Delta)^{(n+1)/2} h &= 0 \mbox{ on } \{|x| > R\} \\
h &= 1 \mbox{ on } \{|x| \leq R\}
\end{aligned}
\end{eqnarray*}
given by Proposition \ref{sox} to lie in $H^{(n+1)/2}(\mathbb{R}^n)$. This amounts to forcing it 
to satisfy appropriate boundary conditions
at $|x| = R$. 
Now our radial solution $h$ belongs to $H^{(n+1)/2}(\mathbb{R}^n)$ and so, by Lemma \ref{trace}, its derivatives 
of order $j$ lie in $H^{n/2 -j}$ of spheres (for $0 \leq j \leq (n-1)/2$) and moreover vary continuously 
with small changes in the radius of the sphere. Alternatively, the function $h$ defined on $(0, \infty)$ belongs
in $H^{(n+1)/2}(\mathbb{R})$ near $|x| = R$, and so by Sobolev embedding in the one-dimensional case, the 
derivatives of $h$ of order up to and including $(n-1)/2$ are continuous at $R$. In any case, since 
$h = 1 \mbox{ on } 
\{|x| \leq R\}$, 
this manifestly forces the $(n+1)/2$ conditions
\begin{eqnarray}\label{hgf}
\begin{aligned}
h(R) &= 1 \\
h'(R) &= 0 \\
\Delta h(R) &= 0\\
(\Delta h)'(R) &= 0\\
\vdots \\
(\Delta^{(n-1)/4} h)(R) &= 0 \mbox{ or }   (\Delta^{(n-3)/4} h)'(R) = 0
\end{aligned}
\end{eqnarray}
depending on whether $n-1$ or $n-3$ is a multiple of $4$. In either case, this gives us $(n+1)/2$
linear conditions on the $(n+1)/2$ unknowns $\alpha_0, \dots , \alpha_{(n-1)/2}$ which therefore 
determine them. (Since we know that there is a unique solution to the boundary value problem
we know that the linear system \eqref{hgf} has a unique solution. Hence the system is guaranteed to be 
nonsingular -- otherwise there would be either no solution or multiple solutions.)
In practice, we shall need to use Corollary \ref{calc} in conjunction with the 
explicit formulae for the $\psi_j$ in order to evaluate the $\alpha_j$. As above, we write 
$\nu = (n-1)/2$.

\medskip
\noindent
We shall use \eqref{lap} from Corollary \ref{calc}, directly on the equations involving $\Delta ^j h(R)$, 
and \eqref{lap} together with \eqref{deriv} on the equations involving $(\Delta ^j h)'(R)$.

\medskip
\noindent
The first equation is simply 
\begin{eqnarray*}
\alpha_0 \psi_0(R) + \cdots + \alpha_\nu \psi_\nu(R) = 1
\end{eqnarray*}
while subsequently we have, by \eqref{lap} and the boundary conditions on the terms
$h, \, \Delta h, \, \dots (\Delta)^{m-1}h$, 
$$\Delta ^m h(R) = 2^m \sum_{j=0}^{\nu -m}(j+m-1 - \nu)(j+m-2 - \nu)\cdots (j- \nu) \alpha_j \psi_{j+m}(R) - (-1)^{m}.$$
Since the terms $(j+m-r - \nu)$ are all negative and there are $m$ of them, the boundary condition
$\Delta ^m h(R) = 0$ reduces this to
$$ 2^m \sum_{j=0}^{\nu -m}(\nu - j)(\nu -j -1) \cdots (\nu -j -m +1)\alpha_j \psi_{j+m}(R) = 1.$$

\medskip
\noindent
Similarly, the boundary condition $(\Delta ^m h)'(R) = 0$ reduces to 
$$ 2^m \sum_{j=0}^{\nu -m}(\nu - j)(\nu -j -1) \cdots (\nu -j -m +1)\alpha_j \psi_{j+m+1}(R) = 0.$$

\medskip
\noindent
Thus we have the $\nu + 1 = (n+1)/2$ equations

\begin{eqnarray}\label{lincondns}
\begin{aligned}
\alpha_0 \psi_0(R) &+ \alpha_1 \psi_1(R) &+\cdots &+ \alpha_\nu \psi_\nu(R) &=& &1&\\
\alpha_0 \psi_1(R) &+ \alpha_1 \psi_2(R) &+\cdots &+ \alpha_\nu \psi_{\nu +1}(R) &=& &0&\\
2 \nu \alpha_0 \psi_1(R) &+ 2(\nu -1) \alpha_1 \psi_2(R) &+ \cdots &+ 2\alpha_{\nu -1} \psi_{\nu}(R) &=& &1&\\
2 \nu \alpha_0 \psi_2(R) &+ 2(\nu -1) \alpha_1 \psi_3(R) &+ \cdots &+ 2\alpha_{\nu-1} \psi_{\nu +1}(R) &=& &0&\\
4\nu (\nu -1) \alpha_0 \psi_2(R) &+ 4(\nu -1)(\nu -2) \alpha_1 \psi_3(R) &+ \cdots &+ 4.2.1\alpha_{\nu -2} \psi_{\nu}(R) &=& &1&\\
4\nu (\nu -1) \alpha_0 \psi_3(R) &+ 4(\nu -1)(\nu -2) \alpha_1 \psi_4(R) &+ \cdots &+ 4.2.1\alpha_{\nu -2} \psi_{\nu +1}(R) &=& &0&\\
&\vdots  &\vdots &  &=& &\vdots&\\
\end{aligned}
\end{eqnarray}
\medskip
\noindent
to solve for the $\nu +1 = (n+1)/2$ unknowns $\alpha_0, \, \alpha_1, \dots , \alpha_\nu$. (We know the system 
is nonsingular, but it is perhaps not so obvious that this is the case by direct inspection.) 
Notice that the ``input'' here involves only the values of $\psi_0(R), \dots, \psi_{\nu +1}(R)$.

\medskip
\noindent
We implement the explicit solution of these equations in 
the first few cases corresponding to $n=1, 3, 5$ and $7$ below. 

\subsection{Calculating the extremal energy}\label{BB}
Once we have determined the $\alpha_j = \alpha_j(R)$ which satisfy \eqref{lincondns}, we can use Theorem 
\ref{main} to calculate the extremal energy 
$\|h\|^2_{H^{(n+1)/2}(\mathbb{R}^n)}$ for $ h = \alpha_0 \psi_0 + \cdots + \alpha_\nu \psi_\nu$. Indeed, we have, with
$m = (n+1)/2$,
\begin{eqnarray}\label{mnb}
\begin{aligned}
 \|h\|^2_{H^{(n+1)/2}(\mathbb{R}^n)} &= {\rm Vol}(K) + \sum_{(n+1)/4 < j \leq (n+1)/2} (-1)^{j} {{\frac{n+1}{2}} \choose j}
\lim_{r \downarrow 0} \int_{\partial K_r} \frac{\partial}{\partial \nu} \Delta^{j-1} h {\rm d}S \\
&= \omega_n R^n + \sigma_{n-1}R^{n-1} \sum_{(n+1)/4 < j \leq (n+1)/2}(-1)^{j} {{\frac{n+1}{2}} \choose j}(\Delta^{j-1} h)'(R)\\
&= \omega_n R^n \left( 1 + nR^{-1}\sum_{(n+1)/4 < j \leq (n+1)/2} (-1)^{j} {{\frac{n+1}{2}} \choose j}(\Delta^{j-1} h)'(R)\right) 
\end{aligned}
\end{eqnarray}
where $\omega_n$ is the volume of the unit ball in $\mathbb{R}^n$ and $\sigma_{n-1} = n \omega_n$ 
is the surface area of the unit sphere $\mathbb{S}^{n-1}$ in
$\mathbb{R}^n$. We emphasise that by $(\Delta^{j-1} h)'(R)$ we mean
$\lim_{r \downarrow R}(\Delta^{j-1} h)'(r)$ here and subsequently
below. 

\medskip
\noindent
The calculation of $(\Delta^{j-1} h)'(R)$ is carried out using \eqref{lap} from Corollary \ref{calc}.
Indeed, this gives us, for $j \geq 2$,
\begin{equation*}
\begin{aligned}
\Delta^{j-1} h = & \; 2^{j-1} \sum_{i=0}^{\nu -j +1}(i+j-2 - \nu)(i+j-3 - \nu)\cdots (i- \nu) \alpha_i \psi_{i+j-1} \\
 &- \sum_{k=0}^{j-2} (-1)^{j-1-k} {{j-1} \choose k} \Delta^k h.
\end{aligned}
\end{equation*}
Now invoking $ \psi_j'(r) = -r \psi_{j+1}(r)$ (that is, \eqref{deriv} from Corollary \ref{calc}),
we obtain
\begin{equation*}
\begin{aligned}
(\Delta^{j-1} h)'(R) = &-R 2^{j-1} \sum_{i=0}^{\nu -j +1}(i+j-2 -
  \nu)(i+j-3 - \nu)\cdots (i- \nu) \alpha_i \psi_{i+j}(R) \\
&- \sum_{k=0}^{j-2} (-1)^{j-1-k} {{j-1} \choose k} (\Delta^k h)'(R).
\end{aligned}
\end{equation*}
The boundary conditions imply that we have $ (\Delta^k h)'(R) = 0$
for $0 \leq k \leq (\nu-2)/2$ when $\nu$ is even, and $0 \leq k \leq (\nu-1)/2$ when $\nu$ is odd. 
So in the $k$-sum we only need consider terms $k$ with $k> (\nu-1)/2$, and therefore, recalling that 
$\nu = (n-1)/2$, we have, for $j \geq 2$, 
\begin{equation}\label{iuy}
\begin{aligned}
(\Delta^{j-1} h)'(R) = -&R 2^{j-1} \sum_{i=0}^{\nu -j +1}(i+j-2 - \nu)\cdots (i- \nu) \alpha_i \psi_{i+j}(R) \\
-& \sum_{(\nu-1)/2 < k \leq j-2}(-1)^{j-1-k} {{j-1} \choose k} (\Delta^k h)'(R).
\end{aligned} 
\end{equation}
This allows us to effectively compute the terms $(\Delta^{j-1} h)'(R)$ recursively for $j \geq 2$ in terms of 
the values of $\psi_0(R), \dots, \psi_{\nu +1}(R)$ and $\alpha_0, \dots, \alpha_\nu$. We need this for those 
$j$ such that ${(n+1)/4 < j \leq (n+1)/2}$. Once done, we can substitute into formula 
\eqref{mnb} for the extremal energy, and we carry this out explicitly in the cases $n=1$, $3$, $5$ and $7$ below. 

\subsection{Summary of the algorithm}
We first solve the equations \eqref{lincondns} for the $(n+1)/2$ unknowns $\alpha_0, \dots, \alpha_{(n-1)/2}$.
Then we use formula \eqref{iuy} to calculate $(\Delta^{j-1} h)'(R)$ for ${(n+1)/4 < j \leq (n+1)/2}$, which 
we can substitute into formula \eqref{mnb}, whose value we then compute to give the extremal energy. 
Finally we divide by $\omega_n n!$ to obtain the magnitude.

\subsection{Proof of Theorem \ref{rat}}\label{sec:pf}
\medskip
\noindent
The discussion of the algorithm above immediately gives that 
the magnitude of the closed ball of radius $R$ in an odd-dimensional Euclidean space is a rational 
function of $R$. Indeed, modulo the multiplicative 
exponential term $e^{- R}$, all of the inputs $\psi_0(R), \dots ,\psi_\nu(R)$ are polynomials in $R^{-1}$ with 
nonnegative integer coefficients. The solutions $\alpha_0, \dots, \alpha_\nu$ to our system of equations 
will be given, by Cram\'er's rule, by a ratio of two determinants, each of which is a polynomial 
in $\psi_0(R), \dots ,\psi_\nu(R)$ with integer coefficients, up to a multiplicative exponential term $e^R$. 
Inspection of formulae \eqref{iuy} and \eqref{mnb} indicates that the magnitude is given by $R^n/n!$ plus 
an integer combination of various $\alpha_r \psi_s$ for $0 \leq r \leq (n-1)/2$ and $0 \leq s \leq (n+1)/2$. 
So the multiplicative exponential terms $e^{\pm R}$ will cancel throughout in every instance, and we will
be left with $R^n/n!$ plus a rational function of $R$ with integer coefficients.

\section{Implementation of the algorithm}\label{sec:imp}

\medskip
\noindent
Recall that we have
$$
\begin{array}{lllll}
\psi_0(r)=e^{-r} \vspace{0.25cm} \\  
\psi_1(r)=\frac{e^{-r}}{r} \vspace{0.25cm} \\ 
\psi_2(r)=e^{-r}\left( \frac{1}{r^2} + \frac{1}{r^3}    \right) \vspace{0.25cm} \\   
\psi_3(r)=e^{-r}\left( \frac{1}{r^3} + \frac{3}{r^4}+\frac{3}{r^5}    \right) \vspace{0.25cm} \\  
\psi_4(r)=e^{-r}\left( \frac{1}{r^4} + \frac{6}{r^5}+\frac{15}{r^6}+\frac{15}{r^7}   \right).
\vspace{0.25cm}
\end{array}$$

\subsection{Case $n=1$.}

\medskip
\noindent
{\em Notational fix:} $\nu = 0$, $m=1$.

\medskip
\noindent
{\em Inputs:} $\psi_0(R)$, $\psi_1(R)$.

\medskip
\noindent
{\em Outputs:} $ \alpha_0$, $h'(R)$, extremal energy, magnitude.

\medskip
\noindent
The single equation for $\alpha_0$ is $\alpha_0 \psi_0(R) = 1$ which has solution $\alpha_0 = 1/\psi_0(R)$.

\medskip
\noindent
Equation \eqref{mnb} requires $(\Delta^{j-1} h)'(R)$ for $j=1$ only, which is simply $h'(R)$, and
by Corollary \ref{calc}, equation \eqref{deriv}, $h'(R) = - R \alpha_0\psi_1(R)$.

\medskip
\noindent
Using \eqref{mnb}, the extremal energy is given by
$$\omega_1 R\left(1 +  R^{-1} (-1) {1 \choose 1} h'(R)\right)= \omega_1 R \left(1  - R^{-1}h'(R) \right)
= \omega_1 R\left(1 +  \alpha_0\psi_1(R)\right).$$
Substituting the value of $\alpha_0$ gives  
$$\omega_1 R\left(1 + \psi_1(R)/\psi_0(R)\right) = \omega_1(R +1).$$ 
Dividing by $1! \omega_1$ gives the magnitude  of the closed interval $[-R, R]$ as 
$$\boxed{R + 1,}$$ 
in agreement with the results of \cite{Lein13} and \cite{Meckes14}.

\subsection{Case $n=3$.}

$\,$

\vspace{0.1cm}

\noindent
{\em Notational fix:} $\nu = 1$, $m=2$.

\medskip
\noindent
{\em Inputs:} $\psi_0(R)$, $\psi_1(R)$, $\psi_2(R)$.

\medskip
\noindent
{\em Outputs:} $ \alpha_0$, $\alpha_1$, $h'(R)$, $(\Delta h)'(R)$, extremal energy, magnitude.

\medskip
\noindent
Our two equations are
$$ \alpha_0 \psi_0(R) + \alpha_1 \psi_1(R) = 1$$
$$ \alpha_0 \psi_1(R) + \alpha_1 \psi_2(R) = 0$$
which have solution
$$ \alpha_0 = \frac{\psi_2(R)}{\psi_0(R) \psi_2(R) - \psi_1(R)^2}$$
$$ \alpha_1 = \frac{-\psi_1(R)}{\psi_0(R) \psi_2(R) - \psi_1(R)^2}.$$
Now 
$\psi_0 (R)\psi_2(R) - \psi_1(R)^2 = e^{-2R}(R^{-2} + R^{-3} - R^{-2}) =  e^{-2R}R^{-3}$,
so 
\begin{eqnarray*}
\begin{aligned}
\alpha_0 &= e^{R}(R+1) \\ 
\alpha_1 &= - e^R R^2.
\end{aligned}
\end{eqnarray*}

\medskip
\noindent
Note the appearance of the expression $R+1$ (as displayed in the previous subsection) in the formula for $\alpha_0$.

\medskip
\noindent
Equation \eqref{mnb} requires $(\Delta^{j-1} h)'(R)$ for $j=2$ only, and by formula \eqref{iuy}
we have
$$(\Delta h)'(R) = -R(2)^1(-1)\alpha_0 \psi_2(R) - 0 = 2R\alpha_0 \psi_2(R)$$

\medskip
\noindent
Using \eqref{mnb}, the extremal energy is given by
$$ \omega_3R^3\left(1 + 3R^{-1}(-1)^2 {2 \choose 2}(\Delta h)'(R)\right) = \omega_3R^3\left(1 +3R^{-1}(\Delta h)'(R)\right)$$
$$ = \omega_3R^3\left(1 + 6\alpha_0 \psi_2(R)\right).$$

\medskip
\noindent
If we substitute in the explicit values for $\alpha_0$ and $\psi_2(R)$ we obtain
$$ = \omega_3(R^3 + 6R^2 + 12 R +6), $$
which upon dividing by $3! \omega_3$ gives the magnitude of the closed ball of radius $R$ in $\mathbb{R}^3$ as
$$ \boxed{\frac{1}{3!} (R^3 + 6R^2 + 12R +6) = \frac{R^3}{3!} + R^2 + 2R +1.}$$
This verifies the Leinster--Willerton convex magnitude conjecture for balls in $\mathbb{R}^3$.

\subsection{Case $n=5$.}
$ \, $

\vspace{0.1cm}

\noindent
{\em Notational fix:} $\nu = 2$, $m=3$.

\medskip
\noindent
{\em Inputs:} $\psi_0(R), \dots , \psi_3(R)$.

\medskip
\noindent
{\em Outputs:} $ \alpha_0$, $\alpha_1$, $\alpha_2$, $h'(R)$, $\Delta h'(R)$,  $(\Delta^2 h)'(R)$, extremal energy, magnitude.

\medskip
\noindent
Our three equations are
\begin{eqnarray*}
\begin{aligned}
\alpha_0 \psi_0(R) &+ \alpha_1 \psi_1(R) &+&  \alpha_2 \psi_2(R) &=& &1& \\
\alpha_0 \psi_1(R) &+ \alpha_1 \psi_2(R) &+& \alpha_2 \psi_3(R) &=& &0&\\
4 \alpha_0 \psi_1(R) &+ 2  \alpha_1  \psi_2(R) &+& 0 &=& &1&
\end{aligned}
\end{eqnarray*}
which upon substitution of the values of $\psi_j(R)$ become
\begin{eqnarray*}
\begin{aligned}
\alpha_0 &+ \alpha_1 R^{-1} &+& \alpha_2 (R^{-2} + R^{-3}) &=& &e^R& \\
\alpha_0 R^{-1} &+  \alpha_1(R^{-2} + R^{-3}) &+& \alpha_2(R^{-3} + 3 R^{-4} + 3 R^{-5}) &=& &0& \\
\alpha_0 4R^{-1} &+ \alpha_1 2(R^{-2} + R^{-3}) &+&0 &=& &e^R.&
\end{aligned}
\end{eqnarray*}

\medskip
\noindent
A visit to \url{matrixcalc.org} confirms that the solution of these equations is given by 
$$\alpha_j = \frac{e^R \beta_j}{2(R+3)}$$
where 
\begin{eqnarray*}
\begin{aligned}
\beta_0 &= 6 + 12 R + 6R^2 + R^3\\
\beta_1 &= -R^2(12 + 9R + 2R^2)\\
\beta_2 &= (2+R)R^4.\\
\end{aligned}
\end{eqnarray*}
Note that the system is singular when $R = -3$, and note the appearance of the expression $R^3/6 + R^2 + 2 R + 1$ (as displayed in the previous subsection)
in the formula for $\alpha_0$.

\medskip
\noindent
Equation \eqref{mnb} requires $(\Delta^{j-1} h)'(R)$ for $j=2$ and $3$, and by formula \eqref{iuy}
we have (case $j = 2$)
$$ (\Delta h)'(R) = -R.2 \sum_{i=0}^{1}(i - 2) \alpha_i \psi_{i+2}(R) + 0 = 
2R(2 \alpha_0 \psi_2(R) + \alpha_1 \psi_3(R)),$$
and, (case $j=3$),
\begin{equation*}
\begin{aligned}
(\Delta^2 h)'(R) &= -R 2^{2} (-1)(-2)\alpha_0 \psi_{3}(R) - (-1)^{1}
  {2 \choose 1} (\Delta h)'(R) \\
 &= -8R \alpha_0 \psi_3(R) + 4R(2 \alpha_0 \psi_2(R) + \alpha_1 \psi_3(R))\\
 &= -8R \alpha_0 \psi_3(R) + 8R  \alpha_0 \psi_2(R) + 4R  \alpha_1 \psi_3(R).
\end{aligned}
\end{equation*}
\medskip
\noindent
Using \eqref{mnb}, the extremal energy is given by
\begin{eqnarray*}
\begin{aligned}
\omega_5 & R^5\left(1  + 5 R^{-1}\Big\{ {3 \choose 2} (\Delta h)'(R) - {3 \choose 3}  (\Delta^2 h)'(R)\Big\}\right) \\
& = \omega_5 R^5\left(1 + 5R^{-1}\{3 (\Delta h)'(R) -  (\Delta^2 h)'(R)\}\right)\\
& = \omega_5 R^5\left(1 + 5\{3 [4 \alpha_0  \psi_2(R) + 2 \alpha_1  \psi_3(R)] 
-[-8\alpha_0 \psi_3(R) + 8  \alpha_0 \psi_2(R) + 4 \alpha_1 \psi_3(R)]\}\right)\\
& = \omega_5 R^5 \left(1 + 5\{ 4\alpha_0 \psi_2(R) + 8 \alpha_0 \psi_3(R) + 2 \alpha_1 \psi_3(R)\}\right). \\
\end{aligned}
\end{eqnarray*}

\medskip
\noindent
Substituting in the explicit values for $\psi_2(R)$ and
$\psi_3(R)$ we see that this expression equals
\begin{eqnarray*}
\begin{aligned}
& \; \; \omega_5 \left(R^5+ 5e^{-R}R^5[(4\alpha_0(R^{-2} + R^{-3}) + 8\alpha_0(R^{-3} + 3R^{-4} + 3R^{-5}) 
+ 2 \alpha_1 (R^{-3} + 3R^{-4} + 3R^{-5})]\right) \\
= & \; \; \omega_5 \left(R^5 + 5e^{-R}[ \alpha_0(4R^3 + 12R^2 + 24R + 24) + \alpha_1 (2R^2 + 6R +6)]\right).
\end{aligned}
\end{eqnarray*}

\medskip
\noindent
Now plugging in the explicit values for $\alpha_0$ and $\alpha_1$, we obtain
$$ \omega_5 R^5 + \frac{5 \omega_5}{2(R+3)} \left((6 + 12 R + 6R^2 + R^3)(4R^3 + 12R^2 + 24R + 24) - 
R^2(12 + 9R + 2R^2)(2R^2 + 6R +6)\right)$$
which, after some simplification and cancellation in the highest order terms, 
gives that the extremal energy is given by
$$ \omega_5 R^5 + 5 \omega_5 \frac{72 + 216 R + 216 R^2 + 105 R^3 + 27 R^4 + 3 R^5}{R +3}.$$
Dividing by $c_5 = 5! \omega_5$ we obtain that the 
magnitude of the ball of radius $R$ in $\mathbb{R}^5$ is 
$$ \boxed{\frac{R^5}{5!} +  \frac{3R^5 +  27 R^4 + 105 R^3  + 216 R^2  + 216 R + 72}{24(R +3)}.}$$
Note that when $R = 0$ we obtain the value $1$, but when $R>0$ this quantity is strictly greater than 
the conjectured value of
$$  \frac{1}{5!}R^5  + \frac{1}{9}R^4  + \frac{2}{3} R^3  + 2 R^2 + \frac{8}{3} R + 2 R^2 + 1.$$

\medskip
\noindent
We can expand our formula when $R> 3$ as
$$ \frac{1}{5!}R^5 + \frac{1}{8}R^4 + \frac{3}{4} R^3 + \frac{17}{8} R^2 +  \frac{21}{8} R + \frac{9}{8} +O(\frac{1}{R})$$
as $R \to \infty$, but the coefficients (other than that of $R^5$) do not agree either with those 
conjectured by Leinster and Willerton. 

\subsection{Case $n=7$.}
$ \; $

\vspace{0.1cm}

\noindent
{\em Notational fix:} $\nu = 3$, $m=4$.

\medskip
\noindent
{\em Inputs:} $\psi_0(R), \dots , \psi_4(R)$.

\medskip
\noindent
{\em Outputs:} $ \alpha_0, \dots ,\alpha_3$, $h'(R), \dots , (\Delta^3 h)'(R)$, extremal energy, magnitude.

\bigskip
\noindent 
For brevity of expression we suppress the multiplicative factors $e^{\pm R}$ which should appear below, and which 
ultimately cancel as described above. We have the four equations
\begin{equation}\label{sistema1}
\left\{   \begin{array}{llll}
\alpha_0 \psi_0(R)+ \alpha_1 \psi_1(R)+ \alpha_2 \psi_2(R)+ \alpha_3 \psi_3(R)=1 \vspace{0.25cm} \\
\alpha_0 \psi_1(R)+ \alpha_1 \psi_2(R)+ \alpha_2 \psi_3(R)+ \alpha_3 \psi_4(R) =0  \vspace{0.25cm} \\
6 \alpha_0 \psi_1(R)+4 \alpha_1 \psi_2(R)+2 \alpha_2 \psi_3(R)=1  \vspace{0.25cm} \\ 
3 \alpha_0 \psi_2(R)+2 \alpha_1 \psi_3(R)+ \alpha_2 \psi_4(R)=0
\end{array}     \right.
\end{equation}
with solution
\begin{equation}\label{sistema2}
\begin{array}{llll}
\alpha_0 = \frac{360+1080R+1080 R^2+ 525 R^3+135 R^4+18 R^5+R^6}{360+288 R+72R^2+6R^3}, \vspace{0.25cm} \\ 
\alpha_1=-\frac{360R^2+555R^3+345R^4+105R^5+16R^6+R^7}{120+96R+24R^2+2R^3}, \vspace{0.25cm} \\ 
\alpha_2=\frac{120R^4+150R^5+66R^6+13R^7+R^8}{120+96R+24R^2+2R^3}, \vspace{0.25cm} \\ 
\alpha_3=- \frac{24R^6+27R^7+9R^8+R^9}{360+288R+72R^2+6R^3}
\end{array}
\end{equation}
as another visit to \url{matrixcalc.org} confirms.

\medskip
\noindent
Note that the numerator in the formula for $\alpha_0$ is 
$$ 360+1080R+1080 R^2+ 525 R^3+135 R^4+18 R^5+R^6$$ 
whereas the magnitude of the ball of radius $R$ in $\mathbb{R}^5$ (calculated in the 
previous subsection) is
\begin{equation*}
\begin{aligned}
& \; \; \frac{R^5}{5!} +  \frac{3R^5 +  27 R^4 + 105 R^3  + 216 R^2  + 216 R
  + 72}{24(R +3)} \\
= & \; \; \frac{360+1080R+1080 R^2+ 525 R^3+135 R^4+18 R^5+R^6}{120(R+3)},
\end{aligned}
\end{equation*}
with the same numerator as $\alpha_0$.

\medskip
\noindent
From equation \eqref{mnb} the extremal energy is given by
\begin{eqnarray}\label{hb}
\begin{aligned}
&\omega_7 R^7 + 7\omega_7 R^6\left(-{4 \choose 3}\left( \Delta^2 u
\right)'(R) + {4 \choose 4}\left( \Delta^3 u \right)'(R)\right)\\
=& \; \omega_7\left( R^7 + 7R^6(-4\left( \Delta^2 u\right)'(R) + \left( \Delta^3 u \right)'(R)\right)\\ 
=& \; \omega_7\left( R^7 -28R^6\left( \Delta^2 u\right)'(R) + 7R^6\left( \Delta^3 u \right)'(R)\right)\\
=& \; \omega_7\left( R^7 + 7R^6\left\{ \left(\Delta^3 u \right)'(R) - 4\left( \Delta^2 u\right)'(R) \right\}\right).\\
\end{aligned}
\end{eqnarray} 

\medskip
\noindent
So we need to calculate $ \left( \Delta^2 u \right)'(R)$ and $ \left(\Delta^3 u \right)'(R)$. 


\medskip
\noindent
Using the recurrence relation \eqref{iuy} with $j=3$ gives
\begin{equation*}
\begin{aligned}
(\Delta^{2} h)'(R) &= -R 2^{2} \sum_{i=0}^{1}(i-2)(i - 3)\alpha_i
  \psi_{i+3}(R) +0 \\
 &= -4R(6 \alpha_0 \psi_{3}(R) + 2\alpha_1 \psi_{4}(R)) \\
&= -8R(3\alpha_0 \psi_{3}(R) + \alpha_1 \psi_{4}(R)).
\end{aligned}
\end{equation*}
\medskip
\noindent
Next, using the recurrence relation \eqref{iuy} with $j=4$ gives
\begin{equation*}
\begin{aligned}
(\Delta^{3} h)'(R) 
&= 6R 2^{3} \alpha_0 \psi_{4}(R) + {{3} \choose 2}  (\Delta^2 h)'(R)
  \\
& = 48R \alpha_0 \psi_{4}(R) +3(-8R(3\alpha_0 \psi_{3}(R) + \alpha_1 \psi_{4}(R)))\\
&=  48R \alpha_0 \psi_{4}(R) -24 R(3\alpha_0 \psi_{3}(R) + \alpha_1
  \psi_{4}(R)) \\
&= 24R(2\alpha_0 \psi_{4}(R) -3\alpha_0 \psi_{3}(R) -\alpha_1 \psi_{4}(R)).
\end{aligned}
\end{equation*}

\vspace{1cm}
\noindent
Now
\begin{equation*}
\begin{aligned}
3\alpha_0\psi_3+\alpha_1\psi_4&=\frac{360+1080R+1080R^2+525R^3+135R^4+18R^5+R^6}{120+96R+24R^2+2R^3} \times \frac{3+3R+R^2}{R^5} \\
& \; \; \; \; -\frac{360+555R+345R^2+105R^3+16R^4+R^5}{120+96R+24R^2+2R^3}\times \frac{15+15R+6R^2+R^3}{R^5}\\
&=-\frac{4320+9405R+8820R^2+4545R^3+1380R^4+246R^5+24R^6+R^7}{R^5 \left(  120+96R+24R^2+2R^3 \right)}
\end{aligned}
\end{equation*}
(noticing the cancellation in the $R^8$ terms), and so
\begin{equation*}\label{antes11}
\left( \Delta^2 u \right)'(R)=\frac{8(4320+9405R+8820R^2+4545R^3+1380R^4+246R^5+24R^6+R^7)}
{R^4 \left(  120+96R+24R^2+2R^3 \right)}.
\end{equation*}

\vspace{1cm}
\noindent
Moreover
$$3\psi_3-2\psi_4=\frac{3}{R^3}+\frac{9}{R^4}+\frac{9}{R^5}-\frac{2}{R^4}-\frac{12}{R^5}-\frac{30}{R^6}-\frac{30}{R^7}=\frac{-30-30R-3R^2+7R^3+3R^4}{R^7},$$
so that
\begin{equation*}
\alpha_0 \left(  3 \psi_3-2 \psi_4  \right)+\alpha_1 \psi_4 
\end{equation*}
equals
\begin{equation*}
\begin{aligned}
&\;\;\;\; \frac{360+1080R+1080R^2+525R^3+135R^4+18R^5+R^6}{360+288R+72R^2+6R^3} \times \frac{-30-30R-3R^2+7R^3+3R^4}{R^7} \\ 
& \; \; \; \; -\frac{1080R^2+1665R^3+1035R^4+315R^5+48R^6+3R^7}{360+288R+72R^2+6R^3} \times \frac{15+15R+6R^2+R^3}{R^7}\\
&=-\frac{10800+43200R+82080R^2+90045R^3+61380R^4+26685R^5+7380R^6+1254R^7+120R^8+5R^9}{R^7 \left(  360+288R+72R^2+6R^3 \right)}\\
\end{aligned}
\end{equation*}
(noticing the cancellation in the $R^{10}$ terms), which gives that
\begin{equation*}\label{antes22}
\left( \Delta^3 u \right)'(R)
\end{equation*}
equals
$$\frac{24(10800+43200R+82080R^2+90045R^3+61380R^4+26685R^5+7380R^6+1254R^7+120R^8+5R^9)}
{R^6 \left(  360+288R+72R^2+6R^3 \right)}.$$

\vspace{1cm}
\noindent
Combining these, we see that
$$R^6\left(\Delta^3u\right)'(R)-4R^6\left(\Delta^2u\right)'(R)$$ 
equals
\begin{equation*}
\begin{aligned}& \;\;\;\; \frac{24(10800+43200R+82080R^2+90045R^3+61380R^4+26685R^5+7380R^6+1254R^7+120R^8+5R^9)}{
  360+288R+72R^2+6R^3 } \\
& \;\;\;\; - \frac{96(4320R^2+9405R^3+8820R^4+4545R^5+1380R^6+246R^7+24R^8+R^9)}{ 360+288R+72R^2+6R^3 } \\
&=\frac{43200+172800R+259200R^2+209700R^3+104400R^4+34020R^5+7440R^6+1080R^7+96R^8+4R^9}{60+48R+12R^2+R^3}.
\end{aligned}
\end{equation*}

\vspace{1cm}
\noindent
Finally, we insert this into \eqref{hb} and divide by $ 7!\omega_7$ to see that the magnitude of the 
closed ball of radius $R$ in $\mathbb{R}^7$ is given by
$$\boxed{\; \frac{R^7}{7!} + \frac{\frac{1}{180}R^9 +\frac{2}{15}R^8 +\frac{3}{2}R^7+\frac{31}{3}R^6
+\frac{189}{4}R^5+145R^4+\frac{1165}{4}R^3+360R^2+240R+60}{R^3+12R^2+48R+60} \;.}$$

\section{Concluding remarks}\label{sec_final}

\medskip
\noindent
1. The three-dimensional convex conjecture predicts that the magnitude of the convex body $K$ 
should be
$$ \frac{V_0(K)}{0! \; \omega_0} + \frac{V_1(K)}{1! \; \omega_1} + \frac{V_2(K)}{2! \; \omega_2} + \frac{V_3(K)}{3! \; \omega_3}$$
where $V_j(K)$ is the $j$'th intrinsic volume and $\omega_0 = 1$, $\omega_1 =2 $,  $\omega_2 = \pi$
and $\omega_3 = 4 \pi/3$. Now $V_0(K) = 1$,  $V_2(K) = \mbox{Surf }(\partial K)/2 $, 
$V_3(K) = \mbox{Vol}(K)$ and $V_1(K) = P_2(K)/ \pi$ where $P_2(K)$ is the coefficient of $t^2$ 
in the degree $3$ polynomial $\mbox{vol }(K + t B_1)$. So the magnitude of $K$ should be
$$ \frac{\mbox{Vol }(K)}{8 \pi} +\frac{\mbox{Surf }(\partial K)}{4 \pi} + \frac{V_1(K)}{2} +1 .$$
In the notation of Theorem \ref{main}, the question of validity of the convex magnitude conjecture 
reduces to whether for the solution 
$h \in H^2(\mathbb{R}^3)$ of the boundary value problem $(I - \Delta)^2 h = 0$ off $K$, $h=1$ on $K$, we have
$$ \frac{1}{8 \pi} \int_{\partial K}\frac{\partial \Delta h }{\partial \nu} {\rm d}S
= \frac{\mbox{surf }(\partial K)}{4 \pi}+ \frac{V_1(K)}{2}+ 1.$$

\medskip
\noindent
When $K$ is the cuboid $[-R_1, R_1] \times [-R_2, R_2] \times [-R_3, R_3]$ we have 
$V_1(K) = 2(R_1 + R_2 + R_3),$ so that we expect the magnitude of $K$ to be
$$  \frac{R_1 R_2 R_3}{ \pi} + \frac{(R_1 R_2 + R_2 R_3 + R_3 R_1)}{2\pi} + (R_1 + R_2 + R_3) + 1.$$
In particular in the case of the cuboid do we have
$$ \frac{1}{8 \pi} \int_{\partial K}\frac{\partial \Delta h }{\partial \nu} {\rm d}S 
= \frac{(R_1 R_2 + R_2 R_3 + R_3 R_1)}{2\pi}+ (R_1 + R_2 + R_3)+ 1?$$
Even when $R_j = R$ for $j = 1,2,3$, why should the quantity 
$$ \frac{1}{8 \pi} \int_{\partial K}\frac{\partial \Delta h }{\partial \nu} {\rm d}S$$ 
be a polynomial of degree $2$, still less the specific polynomial $3R^2/2\pi + 3R + 1$?  
One of the difficulties in addressing this problem lies in not having explicit formulae 
for solutions of these types of boundary value problems in the absence of spherical symmetry.
 
\medskip
\noindent
On the other hand, matters may be better if we instead consider ellipsoids. 
Khavinson and Lundberg \cite{KL} note that ellipsoidal harmonics can be used to 
study elliptic boundary value problems in the interior of ellipsoids. It may be possible to 
use them in the current context to obtain explicit solutions of 
$(I - \Delta)^2 u = 0$ in the exterior of an ellipsoid in $\mathbb{R}^3$ which 
might then lead to explicit formulae for the magnitudes of ellipsoids in $\mathbb{R}^3$.
Note however that the mixed volumes, even the surface area, of a general ellipsoid in $\mathbb{R}^3$
are not expressible as elementary functions of the semi-axes, but instead involve elliptic integrals. 

\medskip
\noindent
2. The methods presented here can be extended to obtain explicit expressions
for the magnitude of general compact sets possessing spherical symmetry.

\medskip 
\noindent
3. An as-yet-unexplained (empirical) phenomenon is that in the formula for the coefficient $\alpha_0(R)$ 
in the $n$-dimensional setting, with $n$ odd and small, the formula for magnitude of the ball of 
radius $R$ in $\mathbb{R}^{n-2}$ makes a rather mysterious appearance, see the previous section. 
Further observations of this kind have been made by Willerton, (private communication), who has also
developed a more streamlined approach to solving the ODE \eqref{fds} and fitting the boundary conditions.
This leads to a more symmetric system of linear equations for the
unknowns $\alpha_j$. Using the new approach to calculating magnitude of
Leinster and Meckes (\cite{LM16}, Theorem 4.16), he is able to derive a formula for the
magnitude of a ball in odd dimensions expressed in terms of the reverse 
Bessel polynomials. This formula can be re-cast in a reasonably succinct and natural form 
in odd dimensions at least up to and including $39$ (where the coefficients of the 
numerator in the magnitude total over $60,000$ digits).

\medskip
\noindent
4. The methods here can also be used to study the extremal energies or Bessel-like capacities 
$$ C_m(K, \lambda) := \inf \{\|(\lambda I - \Delta)^{m/2} h \|^2_{L^2(\mathbb{R}^n)} \; : \; h \equiv 1 \mbox{ on } K\}$$
for $\lambda > 0$ for convex bodies $K$ when $m \in \mathbb{N}$ and to calculate them explicitly when
$K$ is a ball. Indeed, a simple scaling argument shows that
$$ C_m(K, \lambda) = \lambda^{m-n/2}C_m(\lambda^{1/2}K,1)$$
and we have shown the asymptotic behaviour of $C_{(n+1)/2}(K,1)$ and how to calculate it explicitly for 
balls above. In particular, $C_{(n+1)/2}(K,\lambda) \sim \lambda^{1/2}$ as $\lambda \to 0$ and  $C_{(n+1)/2}(K,\lambda) \sim \lambda^{(n+1)/2} {\rm Vol }(K)/n! \omega_n$ as $\lambda \to \infty$. When $m=1$ and $n=3$ for example,
we can calculate using Theorem \ref{main} and the formula for $\psi_1$ that $C_1(B(0,R),1)$ is simply
$4 \pi(R^3/3 + R +1)$. So
\begin{equation*}
\begin{aligned}
C_1(B(0,R), \lambda) &=  \lambda^{-1/2}C_1(B(0,\lambda^{1/2}R),1) \\
&= 4 \pi \lambda^{-1/2}(\lambda^{3/2}R^3/3 + \lambda^{1/2}R +1)\\
&=4 \pi \lambda R^3/3 + 4 \pi R + 4 \pi \lambda^{-1/2}.
\end{aligned}
\end{equation*}

\medskip
\noindent
5. There remains the possibility that a reformulated version of the
convex magnitude conjecture might be
true if we replaced magnitude by some closely related notion which coincides with it
in dimensions one and three, and is strictly smaller than it for nontrivial balls in odd 
dimensions five and above. It would be very interesting to find a such a notion. The notion of maximum 
diversity 
\begin{equation*} 
|X|_+ := \frac{1}{n! \omega_n} \inf\left\{\|f\|^2_{H^{(n+1)/2}(\mathbb{R}^n)} \, : \, 
f \in H^{(n+1)/2}(\mathbb{R}^n), f \geq  1 \mbox{ on } X\right\}
\end{equation*}
found for example in \cite{Meckes13}, \cite{Meckes14} and the references therein, 
and which is much more in-tune than magnitude with classical 
potential theory, unfortunately does not satisfy these criteria. One should note 
the parallel between the definition of maximum diversity and certain obstacle problems for 
higher-order operators (see for example \cite{CF} and \cite{CFT}) though this does not seem to have been 
exploited yet.) We thank E. Milakis for bringing this parallel to our attention.

\medskip
\noindent
6. In forthcoming work, H. Gimperlein and M. Goffeng \cite{GG} use methods from 
semiclassical analysis together with our Theorem \ref{main} to exhibit an asymptotic expansion of
the magnitude $|R X|$ as $R \to \infty$, where $X \subseteq \mathbb{R}^{n}$ is open and bounded
with smooth boundary, when $n \geq 3$ is odd. In particular, they prove 
$$|R X|  = \frac{\mathrm{Vol}(X)}{n!\ \omega_n} R^{n} + \frac{(n+1)\ \mathrm{Surf}(\partial X)}{2\ n!\ \omega_n} R^{n-1} + c R^{n-2} + o(R^{n-2}) \ ,$$ 
where $c$ is proportional to an integral of a curvature of $\partial X$. (The coefficient of $R^{n-1}$ differs 
from the prediction of Leinster and Willerton unless $n=3$.) 

\bibliographystyle{plain}

\end{document}